\def\beq{\begin{equation}}
\def\eeq{\end{equation}}
\let \cite = \citep
\newtheorem{assumption}{Assumption}[section]
\newtheorem{example}{Example}[section]
\DeclareMathOperator*{\argmax}{argmax}
\newtheorem{theorem}{Theorem}[section]
\newtheorem{lemma}{Lemma}[section]
\newtheorem{Remark}{Remark}[section]
\newtheorem{Proposition}{Proposition}[section]
\newcommand{\lf}{\lfloor}
\newcommand{\rf}{\rfloor}
\DeclareMathOperator{\Var}{Var}
\DeclareMathOperator{\var}{var}
\renewcommand{\P}{P}
\newcommand{\cm}{\mathcal m}
\newcommand{\ca}{\mathcal a}
\newcommand{\cs}{\mathcal s}
\newcommand{\cU}{\mathcal U}
\newcommand{\cmm}{\mathcal m}
\newcommand{\ct}{\mathcal t}
\newcommand{\ccr}{\mathcal r}
\newcommand\T{\top}
\newcommand{\bX}{{\bf X}}
\newcommand{\bZ}{{\bf Z}}
\newcommand{\cD}{{\mathcal  D}}
\newcommand{\bx}{{\bf x}}
\newcommand{\E}{E} %
\numberwithin{equation}{section}
\numberwithin{Remark}{section}
\numberwithin{Assumption}{section}
\numberwithin{lemma}{section}
\begin{document}

\newcommand\plim{\mathop{p\mkern2mu\mathrm{\mathchar"702D lim}}}

\title[Change point detection in high dimensional data]{Change point detection in high dimensional data with U-statistics   }

\author{B.\ Cooper Boniece$^{1,*}$}
\email{cooper.boniece@drexel.edu}
\address{$^{1}$Department of Mathematics, Drexel University, Philadelphia, PA
19104 USA }
\author{Lajos Horv\'ath$^2$}
\email{horvath@math.utah.edu}
\address{$^2$Department of Mathematics, University of Utah, Salt Lake City, UT 84112--0090 USA }
\author{Peter M.\ Jacobs$^3$}
\address{$^3$School of Computing, University of Utah, Salt Lake City, UT 84112--0090 USA }
\email{u1266560@utah.edu}

\subjclass[2020]{Primary 62G20, Secondary   60F17, 62G10, 62H15, 62P25}
\keywords{large dimensional vectors, U-statistics, weak convergence, change point, Twitter data}

\begin{abstract}

We consider the problem of detecting distributional changes in a sequence of high dimensional data.  Our approach combines two separate statistics stemming from $L_p$ norms whose behavior is similar under $H_0$ but potentially different under $H_A$, leading to a testing procedure that that is flexible against a variety of alternatives.   We establish the asymptotic distribution of our proposed test statistics separately in cases of weakly dependent and strongly dependent coordinates as $\min\{N,d\}\to\infty$, where $N$ denotes sample size and $d$ is the dimension, and establish consistency of testing and estimation procedures in high dimensions under one-change alternative settings.  Computational studies in single and multiple change point scenarios demonstrate our method can outperform other nonparametric approaches in the literature for certain alternatives in high dimensions. We illustrate our approach through an application to Twitter data concerning the mentions of U.S. Governors. 
\end{abstract}

\maketitle

\section{Introduction}

Detecting changes in a given sequence of data is a problem of critical importance in a variety of disparate fields and has been studied extensively in the statistics and econometrics literature for the past 40+ years.  Recent applications include finding changes in terrorism-related online content  \citep{theo},  intrusion detection for cloud computing security \citep{aldribi:traore:etal:2020}, and monitoring emergency floods through the use of social media data \citep{shoyama:cui:etal:2021}, among many others. The general problem of change point detection may be considered from a variety of viewpoints; for instance, it may be considered in either ``online'' (sequential) and ``offline'' (retrospective) settings, under various types of distributional assumptions, or under specific assumptions on the type of change points themselves. See,  for example, \citet{horvath:rice:2014} for a survey on some traditional approaches and some of their extensions.

The importance of traditional univariate and multivariate contexts notwithstanding, it is increasingly common in contemporary applications to encounter \textit{high-dimensional} data whose dimension $d$ may be comparable or even substantially larger than the number of observations $N$. Popular examples include applications in genomics  \citep{amaratunga:cabrera:2018},  or in the analysis of social media data \citep{gole:tidke:2015}, where $d$ can be up to several orders of magnitude larger than $N$.  However many classical inferential methods provide statistical guarantees only in ``fixed-$d$" large-sample asymptotic settings that implicitly require the sample size $N$ to overwhelm the dimension $d$,  rendering several traditional approaches to change-point detection unsuitable for modern applications in which $N$ and $d$ are both large.  Accordingly, there has been a surge of research activity in recent  years concerning methodology and theory for change-point detection in the asymptotic setting most relevant for applications to high dimensional data, i.e., where both $N,d\to\infty$ in some fashion;  see \citet{liu:zhang:etal:2022} for a survey regarding new developments.  Commonly, asymptotic results in this context require technical restrictions on the size of $d$ relative to $N$, ranging from more stringent conditions such as $d$ having logarithmic-type or polynomial growth in $N$ \citep{jirak:2012}, to milder conditions that permit $d$ to have possibly exponential growth in $N$ \citep{liu:zhou:etal:2020}.  However, for maximal flexibility in practice, it is desirable to have methods that require as little restriction as possible on the rate at which $d$ grows relative to $N$.

In this work, we are concerned with change-point detection problem in the ``offline'' setting in which a given sequence of historical data is analyzed for the presence of changes.
Specifically, we are concerned with the following: let $\bX_1, \bX_2, \ldots, \bX_N$ be random vectors in $\mathbb R^d$ with distribution functions $F_1(\bx), F_2(\bx), \ldots, F_N(\bx)$. We aim to test the null hypothesis %
$$
H_0:\;\;F_1(\bx) = F_2(\bx) = \ldots =F_N(\bx)\quad \mbox{for all}\;\bx\in \mathbb R^d
$$\vspace{-0.5ex}
against the  alternative
\begin{align*}%
H_A:\;\;\;&\mbox{there are}\;1=k_0<k_1<k_2<\ldots<k_R<k_{R+1}=N\;\mbox{and}\;\bx_1, \bx_2, \ldots,\bx_R\;~\mbox{such that}\; \\
&F_{k_{i-1}+1}(\bx)=F_{k_{i-1}+2}(\bx)=\ldots=
F_{k_{i}}(\bx), \quad 1\leq i \leq R+1\;\mbox{for all}\;\bx\in \mathbb R^d  \\
&\;\mbox{and}~ F_{k_{i}}(\bx_i)\neq F_{k_{i}+1}( \bx_{i}), \quad 1\leq i \leq R
\end{align*}%
where $R\geq 1$ is unknown.  The central contribution of this work is a flexible method for testing the hypotheses above whose asymptotic properties are supported theoretically in high dimensions;  namely, when $d$ is potentially large relative to $N$.  To accommodate this mathematically, we provide asymptotic statements in the asymptotic regime $\min\{N,d\}\to\infty$.     The novelty of our approach lies in combining two separate statistics stemming from $L_p$ norms  whose behavior is similar under $H_0$ but potentially different under $H_A$, each individual statistic measuring related but different aspects of the data, leading to a procedure that that is flexible for testing against a variety of alternatives.    To the best of our knowledge, it is the only method in this setting that retains relatively standard asymptotic behavior, thereby admitting critical values that are readily obtained. Ultimately, this leads to a straightforward asymptotic test and estimation procedure that is easy to implement and requires little restriction on the size of $d$ relative to $N$ for use in practice.

The problem of testing the hypotheses above has been been studied by several authors in recent years in both multivariate and high-dimensional settings;  e.g., \citet{lung-yut-fong:levy-leduc:etal:2011,chen:zhang:2015,arlot:celisse:etal,chu:chen:2019}, among others.  Concerning some approaches related to U-statistics, \citet{matteson:james:2014} proposed the use of empirical divergence measures based on the energy distance \cite{szr} assuming that $d$ is fixed.  Though their method has gained some popularity in applications and can perform quite well in certain settings, it lacks theoretical support in high dimensions and has some noteworthy drawbacks.  It has recently been shown by \citet{chakraborty:zhang:2021} and \citet{zhu:zhang:etal:2020} that such divergence measures are potentially unsuitable for detecting certain types of changes in data when the dimension $d$ is large, since they capture primarily only second-order structure in the data in certain settings and can be insensitive to changes beyond first and second moments (see also recent work of \citet{chakraborty:zhang:2021a} that addresses some of these issues).
	
	We also note test statistics employed in  \citet{matteson:james:2014} are related to so-named \textit{degenerate} U-statistics and therefore have a limit distribution that is nonstandard; an explicit expression for the limit (with fixed $d$) was first given by \citet{biau:bleakley:etal:2016} based on an infinite series representation that depends on a sequence of eigenvalues $\lambda_1,\lambda_2,\ldots$ that must be estimated from data for its practical implementation.  \citet{biau:bleakley:etal:2016} point out that resampling methods for such approaches can be computationally burdensome over large samples due to quadratic (in $N$) computational cost required by their method, illustrating asymptotic tests for U-statistic-based approaches can be especially advantageous over larger sample sizes.

 Recently, \citet{liu:zhou:etal:2020} proposed a flexible framework for detecting change points based on $q$-dimensional U-statistics in the setting where $q,d,N\to\infty$.  Though their method is quite flexible, the authors do not obtain the limit distribution of their test statistics and rely on high-dimensional bootstrapping methods to obtain critical values.  Our detection method is based on somewhat simpler one-dimensional U-statistics whose distribution may be explicitly obtained,  bypassing the need for bootstrapping, permutation tests, or similar methods.%

This paper is organized as follows.  Section \ref{s:main} contains framework, notation, and a brief discussion regarding the principle behind our approach.  Section \ref{s:theoreticalresults} contains our supporting theory in high dimensions and discussions of practical implementation of the test statistics.   Section \ref{sec-sim} contains simulation studies, and Section \ref{sec-appl} contains an illustration of our method in an application based on Twitter data concerning mentions of the U.S. governors. Appendix \ref{app-ex} contains some important examples, and all proofs are given in Appendices \ref{sec-pr1} through  \ref{app-mom}.

\section{Framework}\label{s:main}
In what follows, $\|\cdot\|_p$ denotes the vector $L_p$ norm in $ \mathbb R^d$ for some arbitrary fixed $1\leq p <\infty$, and   $\lf x\rf$ denotes the integer part of $x\in \mathbb R$.  For $x<0$ we denote $x^\beta=-(-x)^\beta$ for any $\beta\in \mathbb R$, and for any sequences $a_N,b_N> 0$, we write $a_N\gg b_N$ if $a_N b^{-1}_N \to \infty$ as $N\to\infty$.
\noindent We work with the (idealized) assumption
\begin{assumption}\label{as2} \;$\bX_1, \bX_2, \ldots, \bX_N$ are independent random vectors.
\end{assumption}
\noindent Assumption \ref{as2} is in force throughout the paper.

Our proposed method is based on weighted functionals of two processes constructed from U-statistics.   We first define some intermediate quantities. 
 For $2\leq k <N-2$, we set
 \begin{equation}
  \begin{gathered}
U_{N,d,1}(k)=\frac{1}{\displaystyle
\begin{pmatrix}
k\\
2
\end{pmatrix}
}
\sum_{1\leq i<j\leq k}\|\bX_i-\bX_j\|_p,\quad \quad
U_{N,d,2}(k)=\frac{1}{\displaystyle
\begin{pmatrix}
N-k\\
2
\end{pmatrix}
}
\sum_{k+1\leq i<j\leq N}\|\bX_i-\bX_j\|_p,\\
U_{N,d,3}(k)=\frac{1}{k(N-k)}
\sum_{i=1}^k\sum_{j=k+1}^N\|\bX_i-\bX_j\|_p, \quad\quad U_{N,d,4}=
\frac{1}{N^2}
\sum_{i=1}^N\sum_{j=1}^N\|\bX_i-\bX_j\|_p.
\label{e:Ustats}
\end{gathered}
\end{equation}
We now proceed to define two processes  $V_{N,d}=\{V_{N,d}(t), 0\leq t \leq 1\}$ and $Z_{N,d} =\{Z_{N,d}(t),0\leq t \leq 1\}$,  each meant to capture different aspects of the data.  They are constructed based on the differences of the statistics $U_{N,d,\ell}(k)$ (suitably normalized to account for the differing sample sizes among each statistic $U_{N,d,\ell}(k)$ as $k$ runs through the set $2\leq k \leq N-2$.)  For each $t\in[2/N,1-  2/N]$, let\\
\begin{align*}
V_{N,d}(t)=t(1-t)d^{-1/p} \big[U_{N,d,1}(\lf Nt\rf)-U_{N,d,2}(\lf Nt\rf)\big],%
\end{align*}
and for any $0\leq\beta<1$, set
$$
Z_{N,d}(t)=2 t(1-t)\big(|1-2t| +\tfrac{1}{\sqrt N}\big)^{-\beta} d^{-1/p}\left[U_{N,d,3}(\lf Nt\rf)-U_{N,d,4}\right]%
$$ \ \\
  and we set  $V_{N,d}(t)=Z_{N,d}(t)=0$ if $t\not\in [2/N, 1-2/N]$.  Our approach is based on the following test statistic:\\
\begin{equation}\label{e:main_test_statistic}
T_{N,d}= \sup_{0<t<1}\frac{\max \{|V_{N,d}(t)|,|Z_{N,d}(t)|\} }{w(t)}%
\end{equation}\ \\
 In \eqref{e:main_test_statistic},  $w(t)$ is a \textit{weight function} defined on $[0,1]$  satisfying the following properties:
\begin{enumerate}[$(i)$]
\item  $\inf_{\delta\leq t\leq 1-\delta}w(t)>0$ for all $0<\delta<1/2$%
\item  $w(t)$ is nondecreasing in a neighborhood of 0 and nonincreasing in a neighborhood of 1.%
\end{enumerate}

\begin{figure}
\begin{center}

\begin{minipage}{0.4\linewidth}
\centering
\includegraphics[width=0.65\linewidth]{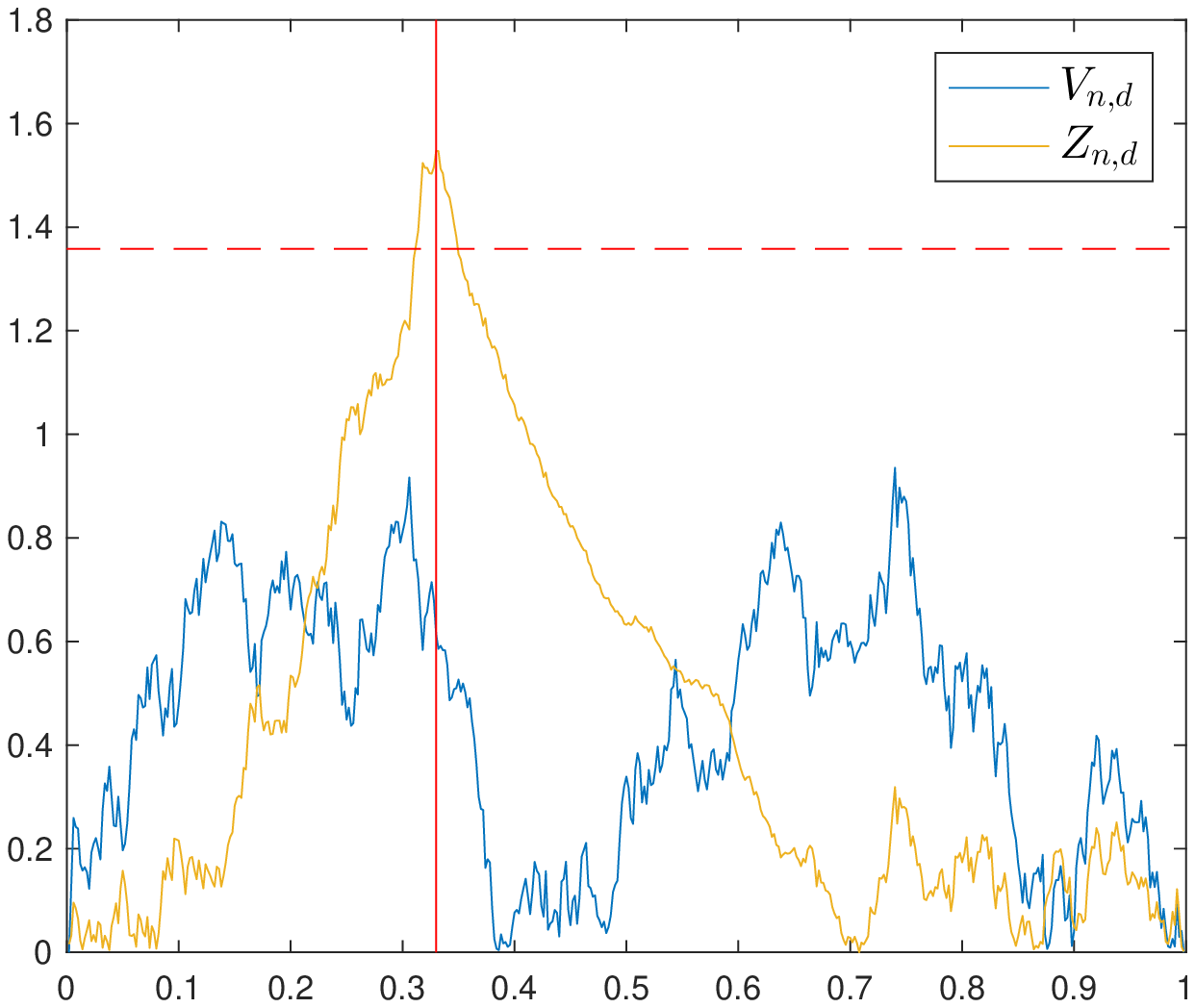}
\end{minipage}
\begin{minipage}{0.4\linewidth}
\centering
\includegraphics[width=0.65\linewidth]{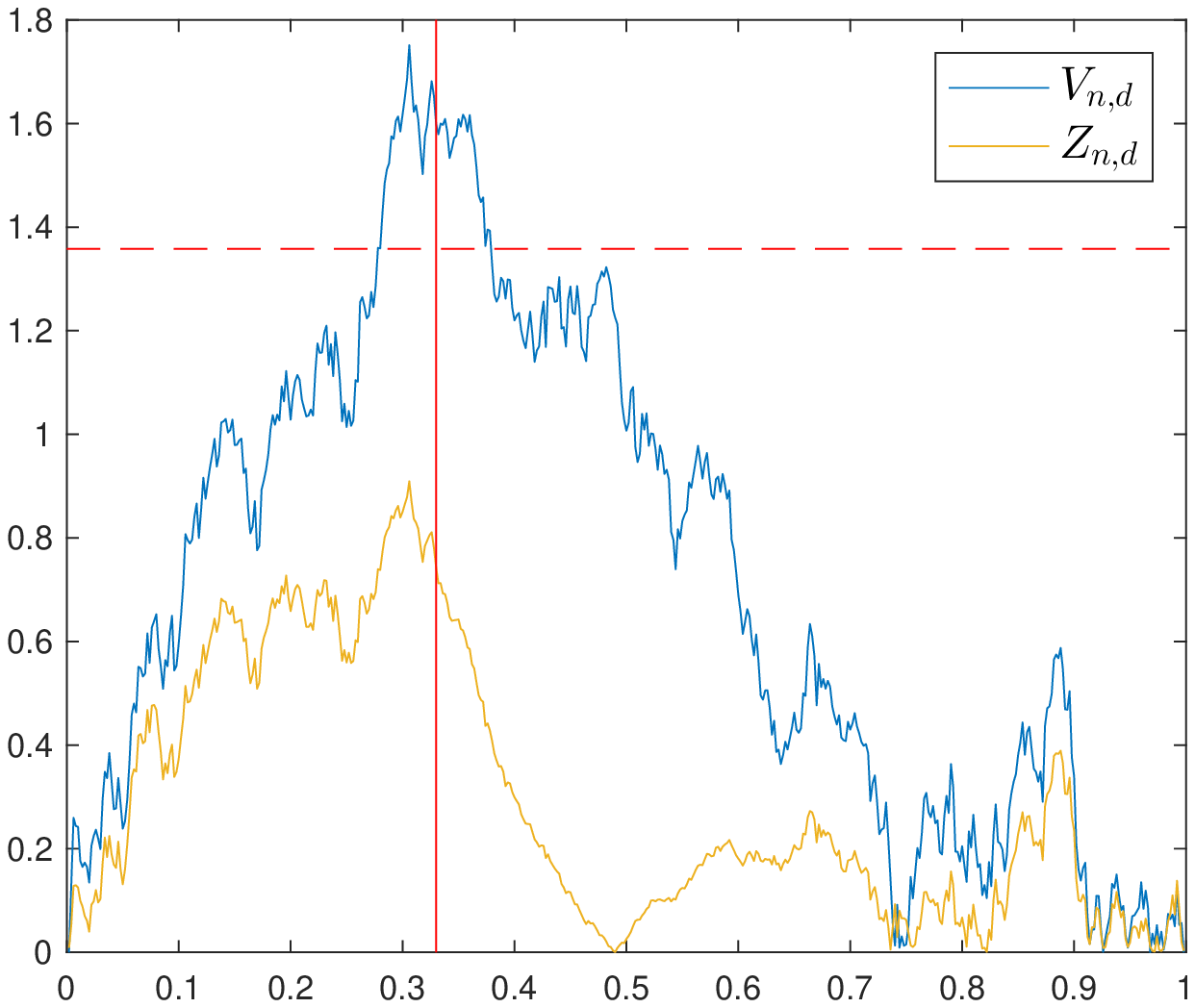}
\end{minipage}
\end{center}
\caption{\label{fig:idea}  \small Paths of $|Z_{n,d}(t)|$ and $|V_{N,d}(t)|$ with $w(t)\equiv 1$, $p=1$ based on a single change at $k_1=\lfloor N/3\rfloor $ (vertical line) for independent Gaussian observations, $N=d=500$; horizontal dashed line is associated critical value at size $0.05$. In the left figure there is only shift in the mean; on the right, only a change in scale. Jointly maximizing $Z_{N,d}$ and $V_{N,d}$ leads to increased power against a broader array of alternatives in high dimensions.} 
\end{figure}

Though tests based on either $Z_{N,d}$ or $V_{N,d}$ alone are suitable themselves, use of both $Z_{N,d}$ and $V_{N,d}$ together allows for increased power against a broader array of alternatives;  see Figure \ref{fig:idea}. Each is suitable for detecting changes of various types, but $V_{N,d}$ is especially sensitive to scale changes, whereas $Z_{N,d}$ is more sensitive to location changes, and both take into account information concerning $p$-th moments in different capacities.  Combining them by jointly maximizing them leads to a testing approach and estimation procedure that are sensitive to a larger variety of distributional changes in high-dimensional contexts.  (Note $T_{N,d}$ can be interpreted a test statistic based on a union-intersection test \citep{roy:1953} formed from two tests: one based on $V_{N,d}$ and one based on $T_{N,d}$.)  In contrast, both $Z_{N,d}$ and $V_{N,d}$ behave somewhat similarly under $H_0$, and owing to this, the statistic $T_{N,d}$ in fact enjoys standard limit behavior in high dimensions under $H_0$, leading to a straightforward asymptotic test. %

The use of the parameter $\beta$ and weight functions $w(t)$ also allows  for increased power against certain alternatives regarding type or location of change point(s). Very roughly speaking, the parameter $\beta$ can be viewed a proxy for the relative importance of alternatives measured by $Z_{N,d}(t)$ compared to those measured by $V_{N,d}(t)$, and the weight function $w(t)$ can be chosen to boost power when change points may be near the boundary of an interval. We further discuss the effect of selection of $w(t)$ and the auxiliary parameter $\beta$  in Section \ref{sec-sim}.

\section{Theoretical results} \label{s:theoreticalresults}
In the statements that follow, we make use of the integral functional
\begin{equation}\label{e:def_I(w,c)}
I(w,c)=\int_0^1 \frac{1}{t(1-t)}\exp\left( -\frac{cw^2(t)}{t(1-t)}   \right)dt
\end{equation}
to determine necessary and sufficient conditions to obtain a finite limit for the weighted supremum functionals of $V_{N,d}(t)$ and $Z_{N,d}(t)$. 

 \subsection{Size}\label{s:size} We  separately consider two distinct settings concerning possible types of dependence in the coordinates of the $\mathbf X_i$. %
 We defer discussion of implementation of the tests in practice to Section \ref{s:implement}.   
\subsubsection{Weak dependence}\label{s:weakdep}

We first turn to the case of weakly dependent coordinates.  For the statements below, we define the functions

\begin{equation}\label{def:gj}
g_j(x_j)=E|X_{1,j}-x_j|^p,\;\;1\leq j \leq d,
\end{equation}
where $x_1,\ldots,x_d\in \mathbb R$.  
\begin{assumption}\label{as4}\;For some $\alpha>\max\{p,10-4/p, 14-8/p\}$, and some constant $C>0$ independent of $d$ and $N$, we have $\max_{1\leq j \leq d}E|X_{1,j}|^\alpha\leq C$, 
\beq\label{as4-1}
E\bigg|\sum_{j=1}^d\big[\left| X_{1,j}-X_{2,j}  \right|^p-E\left| X_{1,j}-X_{2,j}  \right|^p\big]\bigg|^\alpha\leq Cd^{\alpha/2},
\eeq
and
\beq\label{as4-2}
E\bigg|\sum_{j=1}^d\big[ g_j(X_{1,j})  -E  g_j(X_{1,j}) \big]\bigg|^\alpha\leq Cd^{\alpha/2}.
\eeq
Moreover, the limits
\beq\label{as4-3}
\ca(p)=\lim_{d\to\infty}\frac{1}{d}\sum_{j=1}^dEg_j(X_{1,j}),\quad \tau^2=\lim_{d\to \infty}\frac{1}{d}E\bigg( \sum_{j=1}^d [ g_j(X_{1,j})-Eg_j(X_{1,j})]  \bigg)^2
\eeq
exist.
\end{assumption}
 Expressions \eqref{as4-1} and  \eqref{as4-2}  roughly state that the coordinates of $\mathbf X_{1}$ and of $\bX_1-\bX_2$ behave like weakly dependent random variables. In particular,  they are satisfied for AR(1)-type coordinates (c.f. Example \ref{exe2}). In Appendix \ref{app-ex}, we give explicit examples of settings in which Assumptions \ref{as4} and \ref{as5} are satisfied.  Note when $g_1(X_{1,1}), g_2(X_{1,2}),\ldots$ is a wide-sense stationary sequence, $\ca(p)$ is its mean and $\tau^2$ is its corresponding long-run variance. Furthermore, note the distribution of $\bX_1$ depends on $d$ and is also allowed to depend on $N$, so our assumptions  allow for triangular arrays of random vectors.

Next we define the limiting variance of $V_{N,d}(t)$ and $Z_{N,d}(t)$ for the case of weakly dependent coordinates.  Let
\beq\label{sidef}
\sigma^2=\left(\frac{2}{p}\ca^{-1+1/p}(p)\right)^2\tau^2.
\eeq
Note $\sigma^2$ is well-defined under Assumption \ref{as4}.
Since we will normalize with $\sigma^2$, it is natural to require
\begin{assumption}\label{as5}\;$\sigma^2>0$.
\end{assumption}
Assumption \ref{as5} amounts to a high-dimensional non-degeneracy condition. Note that similar assumptions are typically not met in fixed dimensions when the underlying U-statistics are of degenerate type.

Theorems \ref{th1}  and \ref{th2}, stated next, are our main results in the context of Assumption \ref{as4}, which provide the asymptotic distribution under of our test statistics under $H_0$ after appropriate rescaling. This lays the foundation for our asymptotic testing procedure. %

\begin{theorem}\label{th1} Assume that $H_0$ holds, and Assumptions \ref{as2}, \ref{as4} and \ref{as5} are satisfied. If $I(w,c)<\infty$ in \eqref{e:def_I(w,c)} is finite for some $c>0$, then as $\min\{N,d\}\to\infty,$
\begin{align}\label{th1-1}
\sigma^{-1} (Nd)^{1/2} T_{N,d}\stackrel{\cD}{\to}\sup_{0<t<1}\frac{|B(t)|}{w(t)},
\end{align}
where $\{B(t),0\leq t \leq 1\}$ is a Brownian bridge.%

\end{theorem}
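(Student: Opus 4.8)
The plan is to rescale $T_{N,d}$ by $\sigma^{-1}(Nd)^{1/2}$ and show that, after this rescaling, both $V_{N,d}$ and $Z_{N,d}$ reduce to a tied-down partial-sum process of a triangular array, so that a triangular-array Donsker theorem together with the classical weight-function analysis governed by $I(w,c)$ finishes the argument. The first step is \emph{linearization}. Under $H_0$ the $\bX_i$ are i.i.d.\ and $d^{-1/p}\|\bX_i-\bX_j\|_p=W_{ij}^{1/p}$ with $W_{ij}=d^{-1}\sum_{\ell=1}^d|X_{i,\ell}-X_{j,\ell}|^p$. Since $EW_{12}=d^{-1}\sum_\ell E|X_{1,\ell}-X_{2,\ell}|^p=:\bar\mu_d\to\ca(p)$ and $\Var(W_{12})=O(1/d)$ by \eqref{as4-1}, a Taylor expansion of $x\mapsto x^{1/p}$ about $\bar\mu_d$ gives $W_{ij}^{1/p}=\bar\mu_d^{1/p}+\tfrac1p\bar\mu_d^{1/p-1}(W_{ij}-\bar\mu_d)+r_{ij}$, with $r_{ij}=\Op{(W_{ij}-\bar\mu_d)^2}$ off a negligible event on which $W_{ij}$ is near $0$. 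The constant $\bar\mu_d^{1/p}$ and the (deterministic) mean $Er_{12}$ cancel in the differences $U_{N,d,1}-U_{N,d,2}$ and $U_{N,d,3}-U_{N,d,4}$, so only the leading linear term and the fluctuations of $r$ survive; the large exponent $\alpha$ in Assumption \ref{as4} is exactly what forces the $r$-contribution to be $o_P((Nd)^{-1/2})$ in the weighted norm used below.

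Next I would apply the \emph{Hoeffding decomposition} coordinatewise: $|X_{i,\ell}-X_{j,\ell}|^p=E|X_{1,\ell}-X_{2,\ell}|^p+\delta_{i,\ell}+\delta_{j,\ell}+\psi_{\ell,ij}$ with $\delta_{i,\ell}=g_\ell(X_{i,\ell})-Eg_\ell(X_{1,\ell})$ (see \eqref{def:gj}) and $\psi_{\ell,\cdot\cdot}$ completely degenerate. Averaging over $\ell$, $W_{ij}=\bar\mu_d+\bar g_i+\bar g_j+\bar\psi_{ij}$ with $\bar g_i=d^{-1}\sum_\ell\delta_{i,\ell}$ and $\bar\psi_{ij}=d^{-1}\sum_\ell\psi_{\ell,ij}$; by \eqref{as4-3} and \eqref{as4-2}, $\Var(d^{1/2}\bar g_1)\to\tau^2$ and $\sup_d E|d^{1/2}\bar g_1|^\alpha<\infty$, while $E\bar\psi_{12}^2\le\Var(W_{12})=O(1/d)$. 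Substituting this (and the linearization) into \eqref{e:Ustats}, using $\binom k2^{-1}\sum_{i<j\le k}(\bar g_i+\bar g_j)=\tfrac2k\sum_{i\le k}\bar g_i$ and $\tfrac1p\bar\mu_d^{1/p-1}\to\lambda:=\tfrac1p\ca(p)^{1/p-1}$, a short computation with $k=\lf Nt\rf$ and $G_k=\sum_{i\le k}\bar g_i$ gives
\[
V_{N,d}(t)=\tfrac{2\lambda}{N}\big(G_{\lf Nt\rf}-tG_N\big)+\rho_{N,d}(t),\qquad
Z_{N,d}(t)=\big(|1-2t|+\tfrac1{\sqrt N}\big)^{-\beta}(1-2t)\,\tfrac{2\lambda}{N}\big(G_{\lf Nt\rf}-tG_N\big)+\tilde\rho_{N,d}(t),
\]
where $\rho_{N,d},\tilde\rho_{N,d}$ collect the degenerate U-statistic terms (of pointwise order $N^{-1}d^{-1/2}$, i.e.\ a factor $N^{-1/2}$ below the leading term), the Taylor remainders, the discretization error of $\lf Nt\rf$, and the substitutions $k^{-1}\approx(Nt)^{-1}$, $(N-k)^{-1}\approx(N(1-t))^{-1}$.

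Since $\sigma=2\lambda\tau$ (well defined and positive under Assumptions \ref{as4}--\ref{as5}), the rescaled leading term equals $\tfrac1{\tau\sqrt N}\big(\sum_{i\le\lf Nt\rf}d^{1/2}\bar g_i-t\sum_{i\le N}d^{1/2}\bar g_i\big)$. The row-wise i.i.d.\ array $\{d^{1/2}\bar g_i\}_{i\le N}$ has variance tending to $\tau^2$ and a uniformly bounded moment of order $\alpha>2$, so Lindeberg's condition holds and Donsker's theorem for triangular arrays yields $\sigma^{-1}(Nd)^{1/2}V_{N,d}\convd B$ in $D[0,1]$ and, jointly (both processes being continuous functionals of the same partial-sum process), $\sigma^{-1}(Nd)^{1/2}Z_{N,d}\convd B_\beta$ with $B_\beta(t)=\sign(1-2t)\,|1-2t|^{1-\beta}B(t)$, so that $|B_\beta|\le|B|$ pointwise. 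It then remains to pass to the weighted functional $f\mapsto\sup_{0<t<1}|f(t)|/w(t)$. On each $[\delta,1-\delta]$ this is immediate because $\inf_{[\delta,1-\delta]}w>0$; the real work is to show that the weighted suprema over $(0,\delta)\cup(1-\delta,1)$ of the rescaled leading term \emph{and} of $\rho_{N,d},\tilde\rho_{N,d}$ are small in probability, uniformly in $N$, as $\delta\downarrow0$, using H\'ajek--R\'enyi and martingale/degenerate-U-statistic maximal inequalities, property $(ii)$ of $w$, and the fact that $I(w,c)<\infty$ forces $w$ to dominate $\sqrt{t(1-t)}$ strongly enough that $\sup_{t<\delta}|B(t)|/w(t)\to0$ a.s.\ as $\delta\downarrow0$ (the Cs\"org\H{o}--Horv\'ath weight-function technique). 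The continuous mapping theorem, together with $|1-2t|^{1-\beta}\le1$, then gives
\[
\sigma^{-1}(Nd)^{1/2}T_{N,d}\convd\sup_{0<t<1}\frac{\max\{|B(t)|,|B_\beta(t)|\}}{w(t)}=\sup_{0<t<1}\frac{|B(t)|}{w(t)}.
\]

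The main obstacle is precisely this last step: proving that \emph{every} remainder piece --- in particular the degenerate U-statistic parts and the Taylor remainders --- is negligible in the weighted supremum norm \emph{uniformly up to the endpoints}, where the underlying U-statistics rest on only a handful of observations and $1/w(t)$ may be large. This is where the moment condition $\alpha>\max\{p,10-4/p,14-8/p\}$ and the finiteness of $I(w,c)$ must be used in tandem; the rest of the argument is a lengthy but routine blend of delta-method bookkeeping, Hoeffding decompositions, and a triangular-array invariance principle.
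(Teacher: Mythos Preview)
Your overall strategy is close to the paper's, but there is a genuine gap in the boundary step. You assert that $I(w,c)<\infty$ for some $c>0$ ``forces $w$ to dominate $\sqrt{t(1-t)}$ strongly enough that $\sup_{t<\delta}|B(t)|/w(t)\to0$ a.s.\ as $\delta\downarrow0$.'' This is false: the integral condition only guarantees that $\sup_{0<t<1}|B(t)|/w(t)$ is a.s.\ finite, not that the boundary contribution vanishes. The canonical example is $w(t)=(t\log\log(1/t))^{1/2}$ near $0$, for which $I(w,c)<\infty$ when $c$ is large enough, yet $\limsup_{t\to0}|B(t)|/w(t)=\sqrt2$ a.s.\ by the law of the iterated logarithm. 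Because the weighted supremum is not a continuous functional on $D[0,1]$ for such $w$, a pure Donsker-plus-continuous-mapping argument cannot close the proof; the ``uniform-in-$N$ smallness as $\delta\downarrow0$'' you promise for the leading partial-sum piece simply fails.

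The paper avoids this by replacing weak convergence with a \emph{strong approximation}: via Skorokhod embedding it constructs, for each $N,d$, a Brownian bridge $B_{N,d}$ on the same probability space and proves a rate bound of the form
\[
\sup_{2/N\le t\le 1-2/N}\frac{\bigl|\sigma^{-1}(Nd)^{1/2}V_{N,d}(t)-B_{N,d}(t)\bigr|}{(t(1-t))^{\bar\alpha}}=O_P\bigl(N^{\bar\alpha-1/2}\bigr),\qquad \tfrac14<\bar\alpha<\tfrac12.
\]
Near the endpoints one writes $1/w(t)=(t^{1/2}/w(t))\cdot t^{-1/2}$ and uses two facts: $t^{1/2}/w(t)\to0$ as $t\to0$ (this \emph{is} a consequence of $I(w,c)<\infty$), and $\sup_{0<t\le z}|B(t)|/w(t)\stackrel{P}{\to}c_1$ as $z\to0$ for some finite but possibly nonzero $c_1$. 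The rate bound makes the \emph{difference} $|V_{N,d}-B_{N,d}|/w$ negligible on $(0,z]$, so the boundary piece of the empirical process inherits the same limit $c_1$ as the coupled Brownian bridge; one never needs $c_1=0$. A H\'ajek--R\'enyi inequality alone, without the coupling, does not deliver this.

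A structural remark: the paper projects first, decomposing $\psi_{i,j}=d^{-1/p}\|\bX_i-\bX_j\|_p-\theta$ as $\zeta_i+\zeta_j+y_{i,j}$ with $y_{i,j}$ exactly degenerate, and buries the Taylor expansion inside the moment identity $E\zeta_1^2=\sigma^2/(4d)(1+o(1))$. Your ordering---linearize $W_{ij}^{1/p}$ first, then Hoeffding-decompose $W_{ij}$---reaches the same leading term, but the Taylor remainder $r_{ij}$ is neither mean-zero nor degenerate, so it needs its own weighted maximal inequality; the paper's ordering keeps the degenerate part orthogonal and dispatches it in one step via a Menshov-type bound.
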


Theorem \ref{th1} allows for a variety of possible weight functions $w(t)$;  since the limit in \eqref{th1-1} is finite with probability 1 if and only if $I(w,c)<\infty$ with some $c>0$, the conditions on the weight function for Theorem \ref{th1} are sharp (see \citet{csh-1}), as no other classes of weight functions can lead to such a limit.

However, it is sometimes desirable to self-normalize maximally selected statistics, i.e., to use a weight function that is proportional to the standard  deviation of the limit, at least in a neighborhood of 0 and 1. In this case, the corresponding weight function is $w(t)=(t(1-t))^{1/2}$, but Theorem \ref{th1} cannot be applied since $I((t(1-t))^{1/2}, c)=\infty$ for all $c>0$.  Thus, Theorem \ref{th2}, stated next, can be regarded as a counterpart to Theorem \ref{th1} for the choice of weight function $w(t)= (t(1-t))^{1/2}$.  It is a nonstandard Darling--Erd\H{o}s--type result,  showing Gumbel-type limit behavior emerges under the choice $w(t)=(t(1-t))^{1/2}$.

Before giving its statement, we define some auxiliary quantities based on the projections of U-statistics (e.g., \citet{lee:1990}). 
 For $\bx=(x_1, x_2, \ldots, x_d)$, let
\begin{equation}\label{e:H(x)}
H(\bx)=E\left(\frac{1}{d}\sum_{\ell=1}^d|X_{1,\ell}-x_\ell|^p\right)^{1/p},\quad \theta=E [H(\bX_1)].%
\end{equation}
and %
\beq\label{zedef}
\zeta_i=H(\bX_i)-\theta, \;\;\;1\leq i \leq N;\quad  \cs^2(d) = \E \zeta_1^2.
\eeq
i.e., $\zeta_i$ are centered projections of the normalized $L_p$ norms of the differences $\mathbf X_i-\mathbf X_j$ onto the linear space of all measurable functions of $\mathbf X_i$.

\begin{theorem}\label{th2} Define the quantities %
\begin{equation}\label{e:def_a(x)}
 a(x)=(2\log x)^{1/2}\quad\mbox{and}\quad b(x)=2\log x +\tfrac{1}{2}\log \log x-\tfrac{1}{2} \log \pi.
\end{equation}   If  $H_0$ holds, and Assumptions \ref{as2},  \ref{as4}, and \ref{as5} are satisfied, then with $\cs(d)$ as in \eqref{zedef} and the choice $w(t)= (t(1-t))^{1/2}$,  as $\min\{N,d\}\to\infty$,
\begin{align}\label{th2-1}
P\biggl\{\frac{a(\log N)N^{1/2}}{2\cs(d)}~\! T_{N,d} \leq x + b(\log N)\biggl\} \to \exp\left(-2e^{-x}\right)
\end{align}
for all $x$. Furthermore,
\beq\label{th2-3}
\cs^2(d)=\frac{\sigma^2}{4d}+o\left(\frac{1}{d}\right),\;\;\;\;d\to\infty.
\eeq
\end{theorem}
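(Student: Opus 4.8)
The plan is to prove Theorem~\ref{th2} by combining a Darling--Erd\H{o}s-type analysis of the projection part of the U-statistics with the moment bounds that allow us to discard the degenerate remainder and the contribution of $V_{N,d}$ relative to $Z_{N,d}$ in the supremum. First I would establish the asymptotic expansion \eqref{th2-3}. Writing $\|\mathbf X_i - \mathbf X_j\|_p = d^{1/p}\big(\tfrac1d\sum_\ell |X_{i,\ell}-X_{j,\ell}|^p\big)^{1/p}$ and Taylor-expanding the map $u \mapsto u^{1/p}$ about $u = \ca(p)$, the leading term of $H(\mathbf x)$ is $\ca^{1/p}(p) + \tfrac{1}{p}\ca^{-1+1/p}(p)\big(\tfrac1d\sum_\ell g_\ell(x_\ell) - \ca(p)\big) + (\text{lower order})$, so that $\zeta_i = \tfrac{1}{p}\ca^{-1+1/p}(p)\cdot \tfrac1d\sum_\ell \big(g_\ell(X_{i,\ell}) - Eg_\ell(X_{i,\ell})\big) + r_i$, and then $d \cdot \E\zeta_1^2 \to \big(\tfrac1p \ca^{-1+1/p}(p)\big)^2 \tau^2 = \sigma^2/4$ by \eqref{as4-3}, with the remainder $r_i$ controlled via \eqref{as4-1}--\eqref{as4-2} and $\max_j E|X_{1,j}|^\alpha \le C$ (this is where I would borrow the moment lemmas from the later appendices). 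The factor $1/4$ is exactly the Hájek-projection normalization: each pair contributes two copies of $\zeta$.

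Next I would show that under $H_0$, $Z_{N,d}(t)$ admits, uniformly on a shrinking central region and after rescaling by $N^{1/2}/(2\cs(d))$, the Hájek projection approximation: $U_{N,d,3}(\lfloor Nt\rfloor) - U_{N,d,4}$ is, to the relevant order, a linear contrast in the $H(\mathbf X_i)$, namely proportional to $\tfrac{1}{\lfloor Nt\rfloor(N-\lfloor Nt\rfloor)}$ times a centered partial-sum difference of the $\zeta_i$, which after the weight $2t(1-t)(|1-2t|+N^{-1/2})^{-\beta}/(t(1-t))^{1/2}$ collapses, near $t=1/2$, to a standardized Gaussian-like random walk increment. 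The degenerate (second-order) part of the U-statistic decomposition must be shown to be $o_P$ of the projection part uniformly after division by $\cs(d) \asymp d^{-1/2}$ and by the weight; this is exactly the role of the moment conditions $\alpha > \max\{p, 10-4/p, 14-8/p\}$, which give enough integrability to apply a maximal inequality (Menshov--Rademacher / chaining for quadratic forms) over the $O(N)$ values of $k$ while $\min\{N,d\}\to\infty$. One also checks that the weight at $t$ bounded away from $0,1$ contributes nothing in the limit, since there $Z_{N,d}$ is $O_P((Nd)^{-1/2})$ whereas $a(\log N) \to \infty$; similarly the $V_{N,d}$ term is negligible compared to $Z_{N,d}$ in this Darling--Erd\H{o}s scaling because $V_{N,d}$ carries no $(|1-2t|)^{-\beta}$ boost and the self-normalized weight $(t(1-t))^{1/2}$ is the critical Erd\H{o}s--Kac--Darling weight only for the $Z$-process as designed.

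The core of the argument is then the classical Darling--Erd\H{o}s step: the process $\{(t(1-t))^{-1/2} \cdot (\text{centered partial sum of } \zeta_i \text{ up to } \lfloor Nt\rfloor)/\cs(d) : 2/N \le t \le 1-2/N\}$ behaves like the standardized Ornstein--Uhlenbeck-type functional $\sup |U(s)|$ over $|s|$ up to $\log N$, whose maximum, after the $a(\log N)$, $b(\log N)$ centering-and-scaling in \eqref{e:def_a(x)}, converges to the Gumbel law $\exp(-2e^{-x})$; the factor $2$ in the exponent comes from the two tails $t\to 0$ and $t\to 1$. I would invoke a strong approximation (KMT-type embedding of the triangular-array partial sums of $\zeta_i/\cs(d)$ by a Brownian motion, valid under the $\alpha$-moment bound uniformly in $d$) to transfer the extreme-value limit from the Gaussian case, handling the triangular-array dependence on $d$ by noting the $\zeta_i$ are i.i.d.\ for each fixed $(N,d)$ with a variance profile $\cs^2(d)$ that is asymptotically constant after normalization. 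The main obstacle I anticipate is making the approximation of the degenerate U-statistic remainder uniform over $k$ \emph{simultaneously} with the $\min\{N,d\}\to\infty$ regime and with the singular weight $(|1-2t|+N^{-1/2})^{-\beta}/(t(1-t))^{1/2}$ blowing up near $t=1/2$: near the center the denominators are smallest, so the remainder must be shown to vanish faster there, which forces a careful truncation/blocking argument and is precisely why the sharp moment exponents $10-4/p$ and $14-8/p$ appear.
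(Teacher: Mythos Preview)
Your handling of \eqref{th2-3} and the projection/strong-approximation skeleton are fine and match the paper's route (Taylor expansion about $\ca(p)$, Skorokhod/KMT embedding of the i.i.d.\ $\zeta_i$, classical Darling--Erd\H{o}s for the resulting self-normalized partial sums). But you have the roles of $V_{N,d}$ and $Z_{N,d}$ inverted, and this derails the second half of your plan. Under $H_0$, the H\'ajek projection of $U_{N,d,3}(\lfloor Nt\rfloor)-U_{N,d,4}$ carries a factor $(1-2t)$ (from $(N-k)\sum_{i\le k}\zeta_i+k\sum_{i>k}\zeta_i - 2k(N-k)N^{-1}\sum_i\zeta_i$), so after the $(|1-2t|+N^{-1/2})^{-\beta}$ ``boost'' one gets, up to remainders,
\[
\frac{N^{1/2}}{2\cs(d)}\,Z_{N,d}(t)\;\approx\;(1-2t)^{1-\beta}\,B_{N,d}(t),
\qquad
\frac{N^{1/2}}{2\cs(d)}\,V_{N,d}(t)\;\approx\;B_{N,d}(t),
\]
with $B_{N,d}$ a Brownian bridge. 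Since $|1-2t|^{1-\beta}\le 1$ for $0\le\beta<1$, it is $V_{N,d}$, not $Z_{N,d}$, that attains the maximum in $T_{N,d}$, and the paper proves \eqref{th2-1} precisely by reducing to $\sup_t |B_{N,d}(t)|/(t(1-t))^{1/2}$. Consequently your claim that ``$V_{N,d}$ is negligible compared to $Z_{N,d}$'' is false and must be replaced by the opposite inequality.

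This reversal also explains why your ``main obstacle'' is misidentified. The singular weight $(|1-2t|+N^{-1/2})^{-\beta}$ does not cause a blow-up near $t=1/2$: it is always multiplied by the $(1-2t)$ coming from the projection, and $|1-2t|\,(|1-2t|+N^{-1/2})^{-\beta}\le |1-2t|^{1-\beta}\le 1$. The Darling--Erd\H{o}s phenomenon lives entirely at the endpoints $t\to 0$ and $t\to 1$, where $(t(1-t))^{-1/2}$ is the critical weight for $\sup |W(Nt)|/\sqrt{Nt}$; the paper accordingly localizes to $t\in[(\log N)^4/N,\,(\log N)^{-4}]$ (and its reflection) and applies the classical result to the two independent Wiener processes there, which is also where the factor $2$ in $\exp(-2e^{-x})$ comes from. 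The sharp moment exponents in Assumption~\ref{as4} are used to control the degenerate remainder uniformly in $k$ via the maximal inequalities of Lemma~\ref{lem-1} and the moment bounds of Lemma~\ref{appe-mom}, not to tame anything at $t=1/2$.
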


Theorems \ref{th1} and \ref{th2} together provide the high-dimensional theoretical foundation for asymptotic tests based on $T_{N,d}$, encompassing essentially all possible weight functions of practical interest under the weak-type dependence context of Assumption \ref{as4}.   Upon selection of $w(t)$, an asymptotic size $\alpha$ test can be conducted simply by rejecting $H_0$ if $T_{N,d}$ exceeds the $1-\alpha$ quantile of the corresponding limit, provided a consistent estimate of $\sigma$ is available (on estimating $\sigma$, see Section \ref{s:implement}.)  %

\begin{Remark}{\rm 
The processes $V_{N,d},Z_{N,d}$ underlying $T_{N,d}$ are clearly highly dependent since they are both computed from the same sequence of data.  Under $H_0$, our proofs reveal the joint weak convergence of the $\mathbb R^2$-valued process $\sigma^{-1}(Nd)^{1/2}\big(V_{N,d}(t), Z_{N,d}(t)\big)^\top$ in the space $\cD[0,1]$ of $\mathbb R^2$-valued c\`adl\`ag functions on [0,1] to the process 
$$
\big(B(t), (1-2t)^{1-\beta}B(t)\big)^\top, \quad 0\leq t \leq 1,
$$
where $B(t)$ is a standard Brownian bridge, which ultimately drives the asymptotic behavior of $T_{N,d}$ under $H_0$.}
\end{Remark}

In the next section, we demonstrate the test can also be used under certain types of strong coordinate dependence as well.

\subsubsection{Strong dependence.} \label{s:strongdep}
Next we consider the setting when the coordinates are potentially strongly dependent, in the sense that they are discretely sampled observations of random functions $Y_i=\{Y_i(t), 0\leq t \leq 1\}$, $i=1,\ldots,N$, i.e.,
$$
\bX_i=(Y_i(t_1), Y_i(t_2), \ldots, Y_i(t_d))\;\;\mbox{with some }\; 0<t_1<t_2<\ldots <t_d\leq 1.
$$
Below is our main assumption in this setting.
\begin{assumption}\label{as6} \;(i) $Y_1,\ldots,Y_N$ are independent and identically distributed and have continuous sample paths with probability 1,\\
(ii)  $d \max_{1\leq i \leq d}(t_i-t_{i-1})\to 1,$ $t_0=0, t_d=1$\\
(iii)\; $E \int_0^1|Y_1(t)|^{r}dt<\infty$ for some $r \geq\max\{4,p\}$,  \\
(iv)\; and with some $\nu>2$
$$
E\left|\frac{1}{d}\sum_{j=1}^d |X_{1,j}|^p-\int_0^1|Y_1(t)|^pdt\right|^\nu\to 0.
$$
\end{assumption}
The projections $H(\mathbf x)$ in \eqref{zedef} can also be approximated with functionals of $Y_i(t)$. For any measurable function $g$, we define %
$$
\mathcal H(g)=E\left(\int_0^1 \left| Y_1(t)-g(t)\right|^pdt\right)^{1/p}
$$
whenever the expectation exists and is finite.  Next we define the asymptotic variance in the context of strongly dependent coordinates.  Let
$$
\gamma^2=\mbox{\rm var}\left( \mathcal H(Y_1)\right).
$$
Note $\gamma^2<\infty$ under Assumption \ref{as6}(iii). Since we normalize with $\gamma^2$, we naturally require

\begin{assumption}\label{as7}$ \gamma^2>0.$
\end{assumption}

Theorems \ref{th3} and \ref{th4}, stated next, are our main results in the context  of Assumption \ref{as6}.

\begin{theorem}\label{th3} Assume that $H_0$ holds and Assumptions \ref{as6} and \ref{as7} are satisfied. If $I(w,c)<\infty$ is finite for some $c>0$, then
\begin{align}\label{th3-1}
\gamma^{-1}N^{1/2}T_{N,d}\stackrel{\cD}{\to}\sup_{0<t<1}\frac{|B(t)|}{w(t)}
\end{align}
\end{theorem}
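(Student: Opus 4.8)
\textbf{Proof proposal for Theorem \ref{th3}.}

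The plan is to mirror the structure of the proof of Theorem \ref{th1}, replacing the weak-dependence machinery of Assumption \ref{as4} with the functional-data approximation supplied by Assumption \ref{as6}. The starting point is the same algebraic reduction used in the weak-dependence case: one expresses $V_{N,d}(\lf Nt\rf)$ and $Z_{N,d}(\lf Nt\rf)$ in terms of the Hájek-type projections of the underlying U-statistics. Concretely, writing each $\|\bX_i-\bX_j\|_p = d^{1/p}\big(\tfrac{1}{d}\sum_\ell |X_{i,\ell}-X_{j,\ell}|^p\big)^{1/p}$ and using Assumption \ref{as6}(iv), the normalized Riemann sum $\tfrac{1}{d}\sum_\ell |X_{i,\ell}-X_{j,\ell}|^p$ is uniformly close (in the appropriate $L^\nu$ sense) to $\int_0^1|Y_i(t)-Y_j(t)|^p\,dt$, so that the kernel $d^{-1/p}\|\bX_i-\bX_j\|_p$ is well-approximated by $\big(\int_0^1|Y_i(t)-Y_j(t)|^p\,dt\big)^{1/p}$, whose one-sample projection is exactly $\mathcal H(Y_i)$ after centering. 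The first step is therefore to show that replacing $d^{-1/p}\|\bX_i-\bX_j\|_p$ by this functional kernel changes each of $V_{N,d}$, $Z_{N,d}$ by $\op{N^{-1/2}}$ uniformly in $t$ — here the continuity of sample paths (Assumption \ref{as6}(i)), the moment bound (iii), and the mesh condition (ii) control the Riemann-sum error, and the $\nu>2$ exponent in (iv) gives enough integrability to pass to a supremum over $t$ via a maximal inequality.

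The second step is the Hoeffding/Hájek decomposition of the resulting one-dimensional U-statistics with kernel $h(Y_i,Y_j)=\big(\int_0^1|Y_i(t)-Y_j(t)|^p\,dt\big)^{1/p}$. Write $h(Y_i,Y_j) = \theta' + h_1(Y_i) + h_1(Y_j) + h_2(Y_i,Y_j)$ where $\theta'=Eh(Y_1,Y_2)$, $h_1(y)=E[h(Y_1,Y_2)\mid Y_1=y]-\theta' = \mathcal H(y)-E\mathcal H(Y_1)$ (note $\zeta_i$ in \eqref{zedef} is precisely this linear part), and $h_2$ is the degenerate remainder. Substituting this into $U_{N,d,1}$ through $U_{N,d,4}$, the constant $\theta'$ cancels in every difference $U_{N,d,1}-U_{N,d,2}$ and $U_{N,d,3}-U_{N,d,4}$; the linear parts assemble, after the $t(1-t)$ weighting, into the partial-sum (tied-down) process $\tfrac{1}{N}\big(\sum_{i\le \lf Nt\rf}\zeta_i' - t\sum_{i\le N}\zeta_i'\big)$ type quantities, where $\zeta_i' = \mathcal H(Y_i)-E\mathcal H(Y_1)$ are i.i.d. with variance $\gamma^2$ by Assumption \ref{as7}; and the degenerate parts $h_2$ must be shown to be $\op{N^{-1/2}}$ uniformly. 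The degenerate term is $O_P(N^{-1})$ pointwise by the standard variance bound for degenerate U-statistics (second moment finite by Assumption \ref{as6}(iii) with $r\ge 4$), and uniformity over $t$ follows from a chaining/maximal inequality argument exactly as in the weak-dependence proof; this is the place where the bound $r\ge 4$ is used.

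The third step is then routine: after the reductions above, $N^{1/2}V_{N,d}(t)$ and $N^{1/2}Z_{N,d}(t)$ are, uniformly in $t\in[2/N,1-2/N]$ and up to $\op{1}$, equal to $\gamma\,\widehat B_N(t)$ and $2\gamma(|1-2t|+N^{-1/2})^{-\beta}t(1-t)\cdot(\text{difference-of-means process})$ respectively, both driven by the same normalized partial-sum process of the i.i.d. sequence $\zeta_i'/\gamma$. Donsker's theorem gives weak convergence of that partial-sum/tied-down process to a standard Brownian bridge $B$ in $\cD[0,1]$, and since $(|1-2t|+N^{-1/2})^{-\beta}t(1-t)\to |1-2t|^{1-\beta}$ the $Z$-component converges to $(1-2t)^{1-\beta}B(t)$, which is dominated in absolute value by $|B(t)|$; hence $\max\{|V_{N,d}(t)|,|Z_{N,d}(t)|\}$ scaled by $\gamma^{-1}N^{1/2}$ converges in $\cD[0,1]$ to $|B(t)|$. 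The condition $I(w,c)<\infty$ for some $c>0$ is exactly what is needed (via the Csörgő–Horváth-type weighted-approximation results, as in Theorem \ref{th1}) to upgrade this to convergence of the weighted supremum $\sup_{0<t<1}|B(t)|/w(t)$ and to handle the contribution of $t$ near $0$ and $1$ outside $[2/N,1-2/N]$; the tail estimate near the endpoints uses property (ii) of $w$ together with finiteness of $I(w,c)$ to show negligibility.

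\textbf{Main obstacle.} The crux is Step 1 together with the endpoint control: establishing that the Riemann-sum error in replacing the discretized $L_p$ functional by the integral is uniformly negligible at the $N^{-1/2}$ scale \emph{simultaneously} with the U-statistic degeneracy control, in a regime where $d$ and $N$ grow together with no rate linking them — the only coupling is through Assumption \ref{as6}(iv). Making the weighted supremum (with $w$ allowed to blow up at the endpoints, short of $(t(1-t))^{1/2}$) compatible with these two approximations, so that none of the error terms are amplified near $t=0,1$, is where the real work lies; everything downstream is a direct transcription of the Brownian-bridge limit argument already used for Theorem \ref{th1}.
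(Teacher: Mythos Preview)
Your proposal is correct in spirit and would ultimately succeed, but it takes a longer route than the paper, and the detour is precisely where you locate the ``main obstacle.'' The paper's proof of Theorem~\ref{th3} is literally one sentence: Lemmas~\ref{lem-1} and~\ref{lem-2} are stated (and proved) so that they hold under \emph{either} set of assumptions --- they only use that the $\zeta_i=H(\bX_i)-\theta$ are i.i.d.\ together with the moment bounds $E\psi_{1,2}^4$, $\cs^2(d)$, $\cm(d,\nu)$ --- so the entire argument of Theorem~\ref{th1} carries over verbatim once those bounds are supplied. Lemma~\ref{appesecmom} provides them under Assumption~\ref{as6}: $E\psi_{1,2}^4\le c$, $E|\zeta_1|^4\le c$, and $\cs^2(d)\to\gamma^2$ (up to the constant). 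That is the whole proof.

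The key difference from your plan is that the paper \emph{never replaces the discrete kernel $d^{-1/p}\|\bX_i-\bX_j\|_p$ by the functional kernel} $\|Y_i-Y_j\|_{L_p}$. It works throughout with the discrete projections $\zeta_i=H(\bX_i)-\theta$, which are already i.i.d.\ with finite fourth moment, and uses the Riemann-sum/functional approximation only once, inside Lemma~\ref{appesecmom}, to identify the limit of $\cs^2(d)$ with $\gamma^2$. Your Step~1 --- showing the kernel replacement changes $V_{N,d},Z_{N,d}$ by $o_P(N^{-1/2})$ uniformly in the \emph{weighted} norm --- is thus entirely avoided. This matters: to carry out your Step~1 rigorously near the endpoints you would need weighted maximal inequalities for the U-statistic built from the replacement errors $\epsilon_{i,j}=d^{-1/p}\|\bX_i-\bX_j\|_p-\|Y_i-Y_j\|_{L_p}$, and those would in turn require control of ratios like $E|\epsilon_1^{(1)}|^\nu/(\mathrm{Var}\,\epsilon_1^{(1)})^{\nu/2}$ for the Skorokhod-embedding step, which Assumption~\ref{as6}(iv) does not directly supply since it gives no rate. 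The paper sidesteps all of this by decomposing first and approximating only the scalar variance afterward.

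What your approach buys is a more transparent link between the limit and the functional-data structure (the limit is visibly driven by $\mathcal H(Y_i)$ rather than $H(\bX_i)$), at the cost of the extra uniform-approximation step. What the paper's approach buys is economy: the preliminary lemmas are reusable black boxes, and the strong-dependence case becomes a corollary once Lemma~\ref{appesecmom} is in hand.
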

\medskip

The next statement is based on  the choice of weight function $w(t) = (t(1-t))^{1/2}$ and  is the analog of Theorem \ref{th2} in our strong dependence framework.   %
\begin{theorem}\label{th4} %
If  $H_0$ holds and  Assumptions \ref{as6} and \ref{as7} are satisfied, then with ${\cs}(d)$ as in \eqref{zedef} and the choice $w(t) = (t(1-t))^{1/2}$, as $\min\{N,d\}\to\infty$,%
\begin{align}\label{th4-1}
P\biggl\{\frac{a(\log N)N^{1/2}}{2\cs(d)}~\! T_{N,d} \leq x + b(\log N)\biggl\} =\exp\left(-2e^{-x}\right)
\end{align}
for all $x$. Furthermore, ${\cs}^2(d) =\gamma^2/4 +o(1)$. %
\end{theorem}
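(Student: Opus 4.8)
The plan is to reduce Theorem \ref{th4} to Theorem \ref{th2} by showing that the strong–dependence setting of Assumption \ref{as6} produces exactly the same projection–based approximation that drives the Darling--Erd\H{o}s asymptotics under weak dependence. Concretely, under $H_0$ the increment $U_{N,d,3}(\lf Nt\rf)-U_{N,d,4}$ is a (nearly degenerate) two-sample U-statistic increment in the kernel $(\bx,\by)\mapsto d^{-1/p}\|\bx-\by\|_p$, and the $Z_{N,d}$ part of $T_{N,d}$ is what carries the $w(t)=(t(1-t))^{1/2}$ behavior. I would first invoke the Hoeffding decomposition: write $d^{-1/p}\|\bX_i-\bX_j\|_p = \theta_d + h_1(\bX_i) + h_1(\bX_j) + h_2(\bX_i,\bX_j)$, where $h_1(\bx)=H(\bx)-\theta$ up to an $o$-term coming from replacing $(d^{-1}\sum|X_{1,\ell}-x_\ell|^p)^{1/p}$ by its conditional expectation given the relevant coordinates — here is where Assumption \ref{as6}(ii) (near-uniform design points) and \ref{as6}(iv) ($\nu$-th mean convergence of the empirical $p$-th moment to $\int_0^1|Y_1|^p$) let me pass from the discrete functional $H(\bx)$ to the continuous functional $\mathcal H(g)$, so that $\zeta_i = H(\bX_i)-\theta \to \mathcal H(Y_i)-E\mathcal H(Y_1)$ in an $L^\nu$-sense and hence $\cs^2(d)=\E\zeta_1^2 \to \mathrm{var}(\mathcal H(Y_1)) = \gamma^2$; combined with the scaling conventions this yields $\cs^2(d)=\gamma^2/4+o(1)$ after absorbing the factor arising exactly as in \eqref{th2-3}.

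Next I would show the degenerate part $h_2$ and the error terms are uniformly negligible at the relevant scale. The key is that the Darling--Erd\H{o}s normalization $a(\log N)N^{1/2}$ only blows up logarithmically near the endpoints $t\to 0,1$, so I need a maximal inequality showing $\sup_{2/N\le t\le 1-2/N}\big(t(1-t)\big)^{-1/2}\big|(\text{degenerate + remainder part of } Z_{N,d}(t))\big| = o_P\big(1/(N^{1/2}a(\log N))\big)$, and the analogous bound with $V_{N,d}(t)$ in place of $Z_{N,d}(t)$ (recall $T_{N,d}$ takes a max of the two; under $w(t)=(t(1-t))^{1/2}$ the $V$-part is of smaller order near the boundary because of the extra $t(1-t)$ weight already built into $V_{N,d}$, so it does not contribute to the Gumbel limit and is handled by a cruder bound). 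For the degenerate U-statistic remainder this is a moment/chaining argument: bound $E|h_2(\bX_1,\bX_2)|^m$ using Assumption \ref{as6}(iii), then a Móricz-type or Menshov--Rademacher maximal inequality for the partial sums of the degenerate kernel gives the needed rate with room to spare against the $\log$ factors, exactly paralleling the corresponding step in the proof of Theorem \ref{th2}.

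Having reduced $Z_{N,d}(\lf Nt\rf)$ to its linear (projection) term $\approx \frac{2t(1-t)(|1-2t|+N^{-1/2})^{-\beta}}{N}\big(\tfrac{1}{t}\sum_{i\le Nt}\zeta_i - \tfrac{1}{N}\sum_{i\le N}\zeta_i\big)$ up to negligible error, the remaining task is a strong-approximation / Darling--Erd\H{o}s argument for a weighted tied-down partial-sum process of the i.i.d.\ mean-zero variables $\zeta_i$ (which have finite $\nu$-th moment, $\nu>2$, by the above). I would apply the classical Darling--Erd\H{o}s theorem for the weighted uniform empirical/partial-sum bridge (as used for Theorem \ref{th2}, citing \citet{csh-1} and the Komlós--Major--Tusnády approximation), noting that the $\beta$-factor $(|1-2t|+N^{-1/2})^{-\beta}$ is bounded away from $0$ and $\infty$ uniformly in the boundary regions $t=O(1/\log N)$ and $1-t=O(1/\log N)$ where the extremes are attained, so it does not affect the limit — precisely as in the Remark's description of the limit process $(B(t),(1-2t)^{1-\beta}B(t))^\top$, whose sup over $w(t)=(t(1-t))^{1/2}$ near the endpoints is governed solely by $B(t)/\sqrt{t(1-t)}$. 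Dividing by $\cs(d)$ rather than $\cs(d)/\sqrt{\cdot}$ and matching constants produces the factor $2$ in the denominator and the $\exp(-2e^{-x})$ (rather than $\exp(-e^{-x})$) limit, the doubling coming from the two endpoints.

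The main obstacle I expect is the uniform control of the degenerate remainder and of the $H(\bX_i)\to\mathcal H(Y_i)$ replacement error \emph{simultaneously up to the endpoints at the $a(\log N)$ scale}: pointwise negligibility is routine, but the Darling--Erd\H{o}s normalization is unforgiving near $t=0,1$, so I need the maximal-inequality exponents (driven by $r\ge\max\{4,p\}$ in \ref{as6}(iii) and $\nu>2$ in \ref{as6}(iv)) to beat powers of $\log N$, and I must verify that the continuity of the sample paths of $Y_1$ (\ref{as6}(i)) together with \ref{as6}(ii) really does give a quantitative rate for $E|H(\bX_i)-\mathcal H(Y_i)|^\nu\to 0$ that is uniform in $i$ — not merely qualitative convergence — which may require strengthening \ref{as6}(iv) to a rate or extracting one from sample-path continuity plus a modulus-of-continuity argument. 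Everything after that reduction is a faithful repetition of the proof of Theorem \ref{th2} with $\gamma^2$ in place of $\sigma^2$.
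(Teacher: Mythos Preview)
Your approach is essentially the paper's: establish moment bounds for $\psi_{1,2}$ and $\zeta_1$ under Assumption~\ref{as6} (this is Lemma~\ref{appesecmom}, giving $E\psi_{1,2}^4\le c$, $E\zeta_1^2\to\gamma^2$, $E|\zeta_1|^4\le c$), feed these into Lemmas~\ref{lem-1}--\ref{lem-2}, and repeat the proof of Theorem~\ref{th2} verbatim.

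However, your ``main obstacle'' is a phantom, and worrying about it leads you to an unnecessarily fragile argument. You do \emph{not} need a quantitative rate for $H(\bX_i)\to\mathcal H(Y_i)$, and you should not attempt to replace $\zeta_i$ by $\mathcal H(Y_i)-E\mathcal H(Y_1)$ inside the Darling--Erd\H{o}s argument. Lemmas~\ref{lem-1} and~\ref{lem-2} are stated abstractly in terms of $E\psi_{1,2}^2$, $E\zeta_1^2$, and $\cm(d,\nu)$, and the proof of Theorem~\ref{th2} works directly with the i.i.d.\ triangular array $\zeta_i=H(\bX_i)-\theta$ via Skorokhod embedding; all that is required is that $\cs^2(d)$ stays bounded away from $0$ and $\infty$ and that $E|\zeta_1|^\nu$ is bounded, which Lemma~\ref{appesecmom} supplies. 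The $L^2$ convergence $H(\bX_1)\to\mathcal H(Y_1)$ is used \emph{only} to identify $\lim_d\cs^2(d)$ --- a purely qualitative statement suffices, and no uniform-in-$t$ endpoint control of this replacement is ever needed.

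A minor correction: you have the roles of $V_{N,d}$ and $Z_{N,d}$ reversed. Since $|1-2t|^{1-\beta}\le 1$, the approximation in \eqref{da-pr1} shows $\max\{|V_{N,d}(t)|,|Z_{N,d}(t)|\}$ is governed by $|V_{N,d}(t)|\approx |B_{N,d}(t)|$ near the boundary, not by $Z_{N,d}$; the $\beta$-factor is harmless precisely because $Z_{N,d}$ is dominated, not because it is bounded away from zero there.
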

Theorems \ref{th1} through \ref{th4} together illustrate our approach is suitable in a variety of distributional settings concerning the dependence of the coordinates.  Though the scaling factor for $T_{N,d}$ in Theorems \ref{th1} and \ref{th2} is different than in Theorems \ref{th3} and \ref{th4} by a factor of $d^{1/2}$, conveniently, in practice the same implementation can be used in either case, as we discuss in the next section.

\subsubsection{On implementation}\label{s:implement}%

Implementation of asymptotic tests based on Theorems \ref{th1}--\ref{th4} first requires consistent estimation of the asymptotic variances $\sigma^2$ and $\gamma^2$ and suitable normalization of the test statistics.  Below we describe one such approach commonly used for $U$-statistics via the jackknife (e.g., \citet{lee:1990}).   %

In what follows, let $U_{N,d}$ denote $ d^{-1/p}U_{N,d,1}(N)$,  and let  $U_{N-1,d}^{(-i)}$  denote the value analogous to $U_{N,d}$ but based on the values $\bX_1,\ldots, \bX_{i-1}, \bX_{i+1}, \ldots, \bX_N$; i.e.\ $\bX_i$ is left out. Define the so-called \textit{pseudo-observations} 
$$
\cU_i=NU_{N,d}-(N-1)U_{N-1,d}^{(-i)}
$$
and their corresponding average
$$
\bar{\cU}_N=\frac{1}{N}\sum_{i=1}^N\cU_i.
$$
The jackknife estimator for the variance is defined as
$$
\hat{\sigma}_{N,d}^2=\frac{1}{(N-1)}\sum_{i=1}^N(\cU_i-\bar{\cU}_N)^2.
$$
The next statement shows that the same test statistic may be used in practice irrespective of which set of assumptions among Sections \ref{s:weakdep} and \ref{s:strongdep} hold.%
\begin{Proposition}\label{p:jack1} Under either the conditions of Theorem \ref{th1} or of Theorem \ref{th3}, as $\min\{N,d\}\to\infty$,
$$ \frac{N^{1/2}}{\hat \sigma_{N,d}}T_{N,d}  \stackrel{\cD}{\to}\sup_{0<t<1}\frac{|B(t)|}{w(t)}.$$
\end{Proposition}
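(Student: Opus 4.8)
The plan is to show that the jackknife estimator $\hat\sigma_{N,d}^2$ is a consistent estimator for $\sigma^2$ under the weak-dependence assumptions of Theorem \ref{th1}, and for $d\,\gamma^2$ under the strong-dependence assumptions of Theorem \ref{th3}; combining this with Slutsky's theorem and Theorems \ref{th1} and \ref{th3} then yields the claim in both cases simultaneously, because the extra factor of $d^{1/2}$ in the weak-dependence scaling is exactly absorbed by the difference between $\sigma^2$ and $d\gamma^2$. Concretely, I would first recall the standard jackknife identity for one-sample $U$-statistics with a symmetric kernel of degree 2 (here the kernel is $h(\bx,\by) = d^{-1/p}\|\bx-\by\|_p$): the pseudo-observations satisfy $\cU_i - \bar\cU_N = (N-1)\bigl(U_{N-1,d}^{(-i)}$-style increments$\bigr)$ and, up to lower-order terms, $\hat\sigma_{N,d}^2$ is $4$ times the empirical variance of the first-order projection $h_1(\bX_i) := E[h(\bX_i,\bX_2)\mid \bX_i] - E\,h(\bX_1,\bX_2)$. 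In the notation of the excerpt, $h_1(\bX_i)$ is precisely $d^{-1/p}$ times a quantity closely related to $\zeta_i$ in \eqref{zedef} (note $H(\bx)$ is the $L_p$-normalized version, not the $p$-th power, so a small argument via the delta method / Lemma-type expansion already used for \eqref{th2-3} and \eqref{sidef} is needed to pass between $E\,g_j$-based quantities and $H$-based ones), and by \eqref{th2-3} we have $\E\zeta_1^2 = \sigma^2/(4d) + o(1/d)$, so $4\,\mathrm{var}(\zeta_1) = \sigma^2/d + o(1/d)$. Multiplying by $d$ recovers $\sigma^2$ in the weak case; in the strong case the analogous computation using $\mathcal H(Y_1)$ and Theorem \ref{th4}'s statement $\cs^2(d) = \gamma^2/4 + o(1)$ gives $4\,\mathrm{var}(\zeta_1)\to\gamma^2$, so $d$ times the estimator converges to $d\gamma^2$ — but since in the strong case the correct normalizer from Theorem \ref{th3} is just $\gamma^{-1}N^{1/2}$ with no $d^{1/2}$, while $\hat\sigma_{N,d}$ estimates $\sqrt{d}\,\gamma + o(\sqrt d)$ — wait, that is the point: I need $\hat\sigma_{N,d}^2 \to d\gamma^2$ would be wrong; rather $\hat\sigma_{N,d}^2$ as defined (from $U_{N,d} = d^{-1/p}U_{N,d,1}(N)$) converges to $\gamma^2$ directly in the strong case and to $\sigma^2/d$... let me restate carefully in the next paragraph.

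The key computation to get right is \textit{what $\hat\sigma_{N,d}^2$ converges to in each regime}, and this is where I expect the main work. I would argue: $\hat\sigma_{N,d}^2 \to 4\,\mathrm{var}\bigl(E[h(\bX_1,\bX_2)\mid\bX_1]\bigr)$, the Hájek-projection variance of the degree-2 kernel $h(\bx,\by)=d^{-1/p}\|\bx-\by\|_p$. In the strong-dependence regime, $d^{-1/p}\|\bX_1-\bX_2\|_p = \bigl(d^{-1}\sum_j|X_{1,j}-X_{2,j}|^p\bigr)^{1/p}$ converges (by Assumption \ref{as6}(iv) and Riemann-sum approximation) to $\bigl(\int_0^1|Y_1(t)-Y_2(t)|^p dt\bigr)^{1/p}$, whose conditional expectation given $Y_1$ is $\mathcal H(Y_1)$, so the projection variance tends to $\mathrm{var}(\mathcal H(Y_1)) = \gamma^2$; hence $N^{1/2}\hat\sigma_{N,d}^{-1}T_{N,d} = \bigl(\gamma/\hat\sigma_{N,d}\bigr)\cdot\gamma^{-1}N^{1/2}T_{N,d} \to \sup|B|/w$ by Theorem \ref{th3}. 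In the weak-dependence regime, the same projection-variance target is, by \eqref{th2-3}/\eqref{sidef}-type expansions, equal to $\sigma^2/d + o(1/d)$, so $\sqrt d\,\hat\sigma_{N,d} \to \sigma$, and $N^{1/2}\hat\sigma_{N,d}^{-1}T_{N,d} = \bigl(\sigma/(\sqrt d\,\hat\sigma_{N,d})\bigr)\cdot \sigma^{-1}(Nd)^{1/2}T_{N,d} \to \sup|B|/w$ by Theorem \ref{th1}. Either way the limit is $\sup_{0<t<1}|B(t)|/w(t)$, which is the assertion.

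For the consistency of the jackknife itself, the plan is to follow the classical argument (e.g.\ \citet{lee:1990}, Ch.\ 1): write $\cU_i - \bar\cU_N$ explicitly as a sum over pairs involving index $i$ minus its average, recognize it as $\tfrac{2}{N-1}$ (or similar) times $\sum_{j\neq i}\bigl(h(\bX_i,\bX_j) - U_{N,d}\bigr)$, and decompose $h(\bX_i,\bX_j) = \theta_h + h_1(\bX_i) + h_1(\bX_j) + h_2(\bX_i,\bX_j)$ into the Hoeffding components; the degenerate part $h_2$ contributes only $o_P(1)$ to the empirical second moment because $\E h_2^2 = O(1)$ (bounded, using the moment bounds \eqref{as4-1}–\eqref{as4-2} or Assumption \ref{as6}(iii) to control $\E h^2$, $\E h^4$ uniformly in $d$) while it enters with a vanishing coefficient, and $h_1(\bX_i)+h_1(\bX_j)$ produces exactly $4\cdot\frac1N\sum_i h_1(\bX_i)^2 + o_P(1)$, whose expectation is $4\,\mathrm{var}(h_1(\bX_1))$ by the LLN. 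The main obstacle I anticipate is twofold: (a) establishing \emph{uniform-in-$d$} control of the moments $\E\bigl(d^{-1/p}\|\bX_1-\bX_2\|_p\bigr)^k$ and of the projection remainder, so that the $o_P$ terms really are negligible as $\min\{N,d\}\to\infty$ jointly rather than in iterated limits — this requires re-using precisely the concentration estimates behind Theorems \ref{th1} and \ref{th3} rather than classical fixed-$d$ jackknife bounds; and (b) the passage between the $p$-th-power functionals $g_j$ appearing in Assumption \ref{as4} and the $L_p$-norm functional $H$ / $\mathcal H$ that actually defines the projection of $h$, which is the same delta-method linearization that produces the factor $(2/p)\ca^{-1+1/p}(p)$ in \eqref{sidef} and must be shown to transfer to the empirical (jackknife) quantities. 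Once these uniform moment bounds and the linearization are in hand, the rest is the standard jackknife-consistency bookkeeping plus Slutsky.
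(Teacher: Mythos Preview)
Your proposal is correct and follows essentially the same strategy as the paper: show $\hat\sigma_{N,d}^2 = 4\cs^2(d) + o_P(\cs^2(d))$, then invoke \eqref{th2-3} (weak case, giving $d\hat\sigma_{N,d}^2\to\sigma^2$) or the strong-case analogue (giving $\hat\sigma_{N,d}^2\to\gamma^2$), and apply Slutsky together with Theorem \ref{th1} or \ref{th3}. The only technical difference is that where you propose to expand each $\cU_i-\bar\cU_N$ via the Hoeffding decomposition and control the degenerate remainder by hand, the paper takes the more compact route of Arvesen's identity, writing $\hat\sigma_{N,d}^2$ as an explicit linear combination of the U-statistics $U_c=\binom{N}{c}^{-1}\binom{N-c}{2-c}^{-1}\binom{N-2}{2-c}^{-1}\sum_c\psi_{i,j}\psi_{k,\ell}$ (sum over pairs with $c$ indices in common, $c=0,1,2$), and then bounding $\Var(U_c)=O(N^{-1}\E\psi_{1,2}^4)$ via the standard U-statistic variance formula. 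Your anticipated obstacle (a) --- uniform-in-$d$ moment control --- is handled in the paper precisely by the estimate $\E\psi_{1,2}^4\leq c/d^2$ (weak case) and $\E\psi_{1,2}^4\leq c$ (strong case) from Lemmas \ref{appe-mom} and \ref{appesecmom}, which is exactly what your sketch would need as well; obstacle (b) is already absorbed into those lemmas and \eqref{th2-3}, so no separate delta-method step is required at the jackknife stage.
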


To replace the normalizations in the Darling--Erd\H{o}s--type results of Theorems \ref{th2} and \ref{th4}, we need an assumption on the rate of convergence of $\widehat \sigma^2_{N,d}$. %
\begin{Proposition}\label{p:jack2} Under the conditions of Theorem \ref{th2}, if
$$
d^{1/2}\left|\hat{\sigma}_{N,d} -2d^{1/2}\cs(d)\right|\log \log N\stackrel{P}{\to}0
$$
Then,
\beq \label{e:darling_jackknife}
P\biggl\{\frac{a(\log N)N^{1/2}}{\hat\sigma_{N,d}}~\! T_{N,d} \leq x + b(\log N)\biggl\} \to \exp\left(-2e^{-x}\right)
\eeq
for all $x$.  Under the conditions of Theorem \ref{th4}, the limit \eqref{e:darling_jackknife} holds for all $x$ if
$$
\left|\hat{\sigma}_{N,d}-\cs(d)\right|\log \log N\stackrel{P}{\to}0.
$$
\end{Proposition}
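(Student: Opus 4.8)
\noindent The plan is to treat \eqref{e:darling_jackknife} as a Slutsky-type perturbation of the Darling--Erd\H{o}s--type statements in Theorems \ref{th2} and \ref{th4}: replacing the deterministic normalizer $2\cs(d)$ appearing there by the jackknife quantity $\hat\sigma_{N,d}$ multiplies the standardized statistic by the factor $2\cs(d)/\hat\sigma_{N,d}$, and the rate hypothesis is precisely what forces this factor to be close enough to $1$ that the Gumbel limit is unchanged. Concretely, set $A_{N,d}=\frac{a(\log N)N^{1/2}}{2\cs(d)}T_{N,d}$, so that Theorem \ref{th2} (resp. Theorem \ref{th4}) states $A_{N,d}-b(\log N)\convd G$, where $G$ has distribution function $\exp(-2e^{-x})$; since $b(\log N)=2\log\log N+O(\log\log\log N)$ by \eqref{e:def_a(x)}, this gives $A_{N,d}=\Op{\log\log N}$. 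The quantity appearing in \eqref{e:darling_jackknife} is $\widetilde A_{N,d}:=A_{N,d}\,\frac{2\cs(d)}{\hat\sigma_{N,d}}$, so by Slutsky's theorem it suffices to show $\widetilde A_{N,d}-A_{N,d}=A_{N,d}\bigl(\tfrac{2\cs(d)}{\hat\sigma_{N,d}}-1\bigr)=\op{1}$.

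The first ingredient is the ratio-consistency $\hat\sigma_{N,d}/(2\cs(d))\convP 1$, valid in both dependence regimes, which we would establish first; this is the jackknife variance-consistency fact that already underlies Proposition \ref{p:jack1} (together with Theorem \ref{th1} and the limit $2d^{1/2}\cs(d)\to\sigma$ following from \eqref{th2-3}, or with Theorem \ref{th3} and the limit $2\cs(d)\to\gamma$ following from Theorem \ref{th4} under strong dependence, it yields that proposition). It gives $\hat\sigma_{N,d}>0$ with probability tending to $1$ and $\hat\sigma_{N,d}^{-1}=(2\cs(d))^{-1}\bigl(1+\op{1}\bigr)$, hence
\[
\frac{2\cs(d)}{\hat\sigma_{N,d}}-1=\frac{2\cs(d)-\hat\sigma_{N,d}}{2\cs(d)}\,\bigl(1+\op{1}\bigr).
\]

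Under the conditions of Theorem \ref{th2}, \eqref{th2-3} gives $2\cs(d)=\sigma d^{-1/2}\bigl(1+o(1)\bigr)$ with $\sigma>0$ fixed by Assumption \ref{as5}, so the rate hypothesis is equivalent to $\bigl|\hat\sigma_{N,d}/(2\cs(d))-1\bigr|\log\log N\convP 0$; multiplying the last display by $A_{N,d}=\Op{\log\log N}$ then gives $A_{N,d}\bigl(\tfrac{2\cs(d)}{\hat\sigma_{N,d}}-1\bigr)=\op{1}$. Under the conditions of Theorem \ref{th4} the argument is identical, now with $2\cs(d)=\gamma\bigl(1+o(1)\bigr)$ and $\gamma>0$ fixed by Assumption \ref{as7}, the correspondingly weaker rate hypothesis again being equivalent to $\bigl|\hat\sigma_{N,d}/(2\cs(d))-1\bigr|\log\log N\convP 0$. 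In either case Slutsky's theorem delivers \eqref{e:darling_jackknife}.

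We do not anticipate a genuine obstacle here; the one place demanding care is the bookkeeping in the last two steps. In the weak-dependence regime $2\cs(d)$ --- and hence $\hat\sigma_{N,d}$ --- is of order $d^{-1/2}$, so that ``consistency of $\hat\sigma_{N,d}$'' really means consistency of $d^{1/2}\hat\sigma_{N,d}$ for $\sigma$, which is exactly what the factor $d^{1/2}$ in that regime's rate hypothesis accounts for; and one must verify that the $\log\log N$ amplification inherited from $A_{N,d}\asymp b(\log N)$ is precisely, and no more than, what the rate hypothesis can absorb. Everything else is routine manipulation sitting on top of Theorems \ref{th2} and \ref{th4} and the variance consistency behind Proposition \ref{p:jack1}.
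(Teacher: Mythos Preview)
Your Slutsky-type argument is correct and is precisely what the paper has in mind; the paper in fact omits the proof entirely, saying only that it ``is similar'' to that of Proposition~\ref{p:jack1}. One caveat worth flagging: your claim that the rate hypothesis is equivalent to $\bigl|\hat\sigma_{N,d}/(2\cs(d))-1\bigr|\log\log N\convP 0$ tacitly repairs typos in the hypotheses as printed --- under weak dependence the condition should read $d^{1/2}\bigl|\hat\sigma_{N,d}-2\cs(d)\bigr|\log\log N\convP 0$ (equivalently $\bigl|d^{1/2}\hat\sigma_{N,d}-2d^{1/2}\cs(d)\bigr|\log\log N\convP 0$), and under strong dependence it should have $2\cs(d)$ in place of $\cs(d)$ --- since as literally stated neither hypothesis is satisfiable (recall $\hat\sigma_{N,d}\sim 2\cs(d)\sim\sigma d^{-1/2}$ under weak dependence and $\hat\sigma_{N,d}\to\gamma=2\lim\cs(d)$ under strong dependence). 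With that correction your equivalence holds and the rest of the argument goes through verbatim.
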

Thus, upon choosing the weight function $w(t)$, one can appropriately normalize $T_{N,d}$ using $\hat\sigma_{N,d}$ to obtain the same limit distribution under $H_0$ in either the weak- or strong- coordinate dependence case. This leads to a testing procedure that, with regards to its asymptotic size, remains indifferent conercerning which assumption among these two sets holds.

Critical values of the limiting test statistics in Theorem \ref{th1} and Theorem \ref{th3} may be obtained through various means. For instance, \citet{franke} provide a fast adaptive method to approximate the critical values for the supremum of the Brownian bridge with weight function $w(t)=(t(1-t))^\kappa$, $0\leq \kappa<1/2$. Selected critical values for the weighted Brownian bridge are also tabulated in \citet{olmo}.   
If desired, resampling methods can also be used to provide critical values; for instance, one can use the bootstrap as in \citet{goho} and \citet{huki}; permutation-based methods can be also used to obtain critical values (cf. \citet{anhu}), though at potentially at a substantial computational cost. This is discussed toward the end of Section \ref{s:MC:discussion}.

\subsection{Power.}\label{s:power}

Next we briefly discuss the behavior of the statistics under $H_A$. For simplicity we first consider the case of a single change point $(R=1)$ at location $k_1$. Power in both single and multiple change scenarios is examined numerically in Section \ref{sec-sim}.  %

The next result provides sufficient conditions for high-dimensional consistency of the asymptotic tests given in Section \ref{s:size}.  For the statements ahead, define
\begin{equation}\label{e:mu1,mu2}
\mu_1=E\|\bX_1-\bX_2\|_p, \quad \mu_2=E\|\bX_{k_1+1}-\bX_{k_1+2}\|_p
\end{equation}
and
\begin{equation}\label{e:mu12}
 \mu_{1,2}=E\|\bX_1-\bX_{k_1+1}\|_p.
\end{equation}
(N.b.: $\mu_1,\mu_2,$ and $\mu_{1,2}$ depend on $d$ and are allowed to depend on $N$.) Also, set 
\begin{equation}\label{e:def_rescaledT_Nd}
\widetilde T_{N,d} = \begin{cases}\displaystyle\frac{N^{1/2} T_{N,d}}{\cs(d)}& \text{if $I(w,c)<\infty$ for some $c$,}\\
\displaystyle \frac{a(\log N)N^{1/2}}{\cs(d)}~\! T_{N,d}- b(\log N)& w(t)=(t(1-t))^{1/2}.
\end{cases}
\end{equation}
Note that divergence of $\widetilde T_{N,d}$ implies the consistency  of the associated test in Theorems \ref{th1}--\ref{th4}.  We also denote $0<\eta<1$ as the break fraction, i.e., the change point $k_1$ is given as $k_1=\lf N\eta\rf$.
\begin{theorem}\label{th:power} Suppose $R=1$ and  $k_1=\lf N\eta\rf,$ for some fixed $0<\eta<1$. Let 
$$
\Delta_{N,d} =d^{-1/p}\max\{|\mu_1-\mu_2|,|\mu_1-\mu_{1,2}|,|\mu_2-\mu_{1,2}|\} %
$$
Then, we have
\begin{equation}\label{e:widetildeT_diverg}
\widetilde T_{N,d}\stackrel P \to \infty
\end{equation}
whenever  Assumptions \ref{as2}, \ref{as4} and \ref{as5} hold,  and
\begin{enumerate}[(i)]
\item $(Nd)^{1/2} \Delta_{N,d}\to\infty$, if $w$ is chosen such that $I(w,c)<\infty$ for some $c$,
\item $(Nd)^{1/2} \left(\log\log N\right)^{-1/2}\Delta_{N,d} \to \infty$, if $w(t)=(t(1-t))^{1/2}$.
\end{enumerate}
Under Assumptions \ref{as6}--\ref{as7}, \eqref{e:widetildeT_diverg} holds whenever
\begin{enumerate}[(i')]
\item $N^{1/2} \Delta_{N,d}\to\infty$, if $w$ is chosen such that  $I(w,c)<\infty$ for some $c$,
\item $N^{1/2} \left(\log\log N\right)^{-1/2}\Delta_{N,d}\to\infty$, if $w(t)=(t(1-t))^{1/2}$.
\end{enumerate} 
\end{theorem}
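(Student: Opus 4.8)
The plan is to show that under $H_A$ with a single change at $k_1 = \lfloor N\eta\rfloor$, the deterministic ``signal'' part of either $V_{N,d}$ or $Z_{N,d}$ evaluated at $t=\eta$ is of order $\Delta_{N,d}$, while the stochastic fluctuations around it are negligible relative to the stated scaling. First I would compute the population means of the four U-statistics in \eqref{e:Ustats} under the one-change model. For $k$ on the correct side of $k_1$ — say $k=k_1$ — we have $EU_{N,d,1}(k_1)=\mu_1$, $EU_{N,d,2}(k_1)=\mu_2$, $EU_{N,d,3}(k_1)=\mu_{1,2}$, and $EU_{N,d,4}$ is a convex combination $\eta^2\mu_1+(1-\eta)^2\mu_2+2\eta(1-\eta)\mu_{1,2}+O(1/N)$. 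Hence $d^{-1/p}E[U_{N,d,1}(k_1)-U_{N,d,2}(k_1)] = d^{-1/p}(\mu_1-\mu_2)$ and $d^{-1/p}E[U_{N,d,3}(k_1)-U_{N,d,4}]$ equals $d^{-1/p}$ times a fixed nonzero linear combination of $(\mu_{1,2}-\mu_1)$ and $(\mu_{1,2}-\mu_2)$ (with coefficients depending only on $\eta$). Taking the max of the relevant normalized differences, at least one of $|EV_{N,d}(\eta)|$, $|EZ_{N,d}(\eta)|$ is bounded below by a constant (depending on $\eta,\beta$) times $\Delta_{N,d}$; the elementary identity $\mu_1-\mu_2 = (\mu_1-\mu_{1,2})-(\mu_2-\mu_{1,2})$ guarantees the combination driving $Z_{N,d}$ cannot vanish identically when $\Delta_{N,d}$ is attained by $|\mu_1-\mu_2|$, so the three-way maximum in $\Delta_{N,d}$ is genuinely controlled.

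Next I would control the stochastic deviation $T_{N,d}-$ (its deterministic counterpart at $t=\eta$) from below. Since $T_{N,d}=\sup_t \max\{|V_{N,d}(t)|,|Z_{N,d}(t)|\}/w(t) \ge \max\{|V_{N,d}(\eta)|,|Z_{N,d}(\eta)|\}/w(\eta)$, and $w(\eta)$ is a fixed positive constant by weight property $(i)$, it suffices to show that $V_{N,d}(\eta)$ and $Z_{N,d}(\eta)$ concentrate around their means at a rate faster than $\Delta_{N,d}$. This is where I would reuse the moment machinery already developed for Theorems \ref{th1}--\ref{th4}: the same Hoeffding-decomposition / projection bounds that give tightness and the CLT for $(Nd)^{1/2}\sigma^{-1}V_{N,d}$, $(Nd)^{1/2}\sigma^{-1}Z_{N,d}$ under $H_0$ carry over (after recentering at the piecewise-constant model) to show $\big|V_{N,d}(\eta)-EV_{N,d}(\eta)\big| = O_P\big((Nd)^{-1/2}\big)$ and likewise for $Z_{N,d}$ in the weak-dependence case (replace $Nd$ by $N$ under Assumption \ref{as6}). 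Combining, $T_{N,d} \ge c(\eta,\beta)\,\Delta_{N,d} - O_P\big((Nd)^{-1/2}\big)$, so $(Nd)^{1/2}T_{N,d}/\cs(d) \ge c'(\eta,\beta)(Nd)^{1/2}\Delta_{N,d}/(d^{1/2}\cs(d)) - O_P(1)$, and since $d^{1/2}\cs(d)\to \sigma/2$ by \eqref{th2-3}, the hypothesis $(Nd)^{1/2}\Delta_{N,d}\to\infty$ forces $\widetilde T_{N,d}\to\infty$ in the case $I(w,c)<\infty$; in the Darling--Erd\H{o}s case one loses a further $(\log\log N)^{1/2}$ from $a(\log N)$ versus the $b(\log N)$ subtraction, which is exactly the extra factor in hypothesis $(ii)$. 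The strong-dependence statements $(i')$, $(ii')$ are identical with $d^{1/2}\cs(d)\to\gamma/2$.

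The main obstacle I anticipate is not the mean computation or the final arithmetic, but verifying that the concentration bounds for $V_{N,d}(\eta)$ and $Z_{N,d}(\eta)$ still hold under $H_A$ rather than $H_0$ — the $\bX_i$ are now only piecewise-i.i.d., so the U-statistics $U_{N,d,\ell}(k_1)$ are sums over pairs drawn from a \emph{mixture} or from two \emph{distinct} populations, and one must check that Assumptions \ref{as4}--\ref{as5} (or \ref{as6}--\ref{as7}), which are stated for $\bX_1,\bX_2$, transfer to the pre- and post-change distributions and to the cross-terms $\|\bX_i-\bX_j\|_p$ with $i\le k_1<j$. I would handle this by noting that $U_{N,d,1}(k_1)$, $U_{N,d,2}(k_1)$, $U_{N,d,3}(k_1)$ are each \emph{ordinary} (one-population or two-sample) U-statistics in i.i.d.\ data, to which the projection/moment estimates of the appendices apply verbatim with $\mu_1,\mu_2,\mu_{1,2}$ in place of $\theta$; the only genuinely new element is bounding $U_{N,d,4}$, but it is an average of the other three up to $O(1/N)$ corrections, so no new estimate is needed. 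A secondary technical point is that $w(\eta)$ and the coefficients multiplying $\Delta_{N,d}$ depend on $\eta$ and $\beta$ but not on $N,d$, so they may be absorbed into the constants; the $(|1-2\eta|+N^{-1/2})^{-\beta}$ factor in $Z_{N,d}$ is bounded above and below for fixed $\eta\ne 1/2$, and for $\eta=1/2$ it is $\asymp N^{\beta/2}$, which only \emph{helps}, so no case distinction on $\eta$ is actually required.
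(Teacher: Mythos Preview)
Your proposal is correct and follows a route that differs from the paper's in one genuine respect. Both arguments lower-bound $T_{N,d}$ by evaluating the underlying processes at a well-chosen point and showing the signal there is of order $\Delta_{N,d}$ while the stochastic fluctuation is $O_P((Nd)^{-1/2})$ (resp.\ $O_P(N^{-1/2})$). The difference is \emph{which} process and \emph{which} point. The paper works exclusively with $V_{N,d}$: when $|\mu_1-\mu_2|$ drives $\Delta_{N,d}$ it evaluates $V_{N,d}$ at $t=\eta$, and when $|\mu_{1,2}-\mu_i|$ drives $\Delta_{N,d}$ it evaluates $V_{N,d}$ at a perturbed point $t=\eta\pm\delta$, where the mean function $q_V$ picks up a nonzero contribution from $\mu_{1,2}-\mu_i$ (this appeals to Lemma~\ref{l:onechange_lemma}). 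You instead evaluate \emph{both} $V_{N,d}$ and $Z_{N,d}$ at the single point $t=\eta$ and use the linear-algebra fact that $\max\{|b-a|,\,|\eta^2 a+(1-\eta)^2 b|\}\ge c(\eta)\max\{|a|,|b|\}$ with $a=\mu_{1,2}-\mu_1$, $b=\mu_{1,2}-\mu_2$. Your route is arguably more natural given that $T_{N,d}$ is defined as the joint maximum of $V$ and $Z$; the paper's route has the minor advantage that it never needs concentration for the two-sample statistic $U_{N,d,3}$, but as you note, $U_{N,d,4}$ decomposes linearly into $U_{N,d,1}(k_1),U_{N,d,2}(k_1),U_{N,d,3}(k_1)$, so this is not a real saving.

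Two small comments. First, the sentence invoking the identity $\mu_1-\mu_2=(\mu_1-\mu_{1,2})-(\mu_2-\mu_{1,2})$ is stated backwards: the nontrivial case is when $\Delta_{N,d}$ is attained by $|\mu_{1,2}-\mu_i|$, and there you need to rule out that $V$ and $Z$ at $\eta$ are \emph{simultaneously} small---the two-by-two argument above does this, but your text suggests the opposite direction. Second, your correctly identified obstacle---that Assumptions~\ref{as4}--\ref{as5} are literally stated only for the pre-change law and must be assumed for the post-change and cross-sample kernels as well---is equally present (and equally implicit) in the paper's own proof, so this is a shared gap in the hypotheses rather than a defect of your argument.
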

Observe that the hypotheses of Theorem \ref{th:power} require a stronger separation condition in the context of Assumptions \ref{as6}--\ref{as7} compared to the context of Assumptions \ref{as4}--\ref{as5}. For instance, when $\Delta_{N,d}$ is held fixed, the conditions in $(i)$ and $(ii)$ suggest larger values of $d$ lead to improved test power under Assumptions \ref{as4}--\ref{as5}, whereas this is not the case for conditions $(i')$ and $(ii')$ under Assumptions \ref{as6}--\ref{as7}.   This can roughly be seen as a reflection of the fact that a stronger signal is needed to overcome comparatively stronger coordinate-wise dependence.%

It is also worth noting that even if  the distributions change at location $k_1$, it is still possible that $\mu_1=\mu_2$ holds.  For example, under a location shift,  $\mu_1=\mu_2$, but $\mu_1\neq\mu_{1,2}$. Therefore, Theorem \ref{th:power} demonstrates our tests are consistent against a variety of alternatives, encompassing changes in location, scale, and in higher moments.  %

Though our main focus is testing, the behavior of our test statistics under the alternative can be used to estimate the change point location. Below we provide one such possibility that is consistent under mild additional conditions. Let $$Z_{N,d,0}(t)=t(1-t) d^{-1/p}\left[U_{N,d,3}(\lf Nt\rf)-U_{N,d,4}\right],$$
 and let
$$k_V= \argmax_{2<k<N-2}\big|V_{N,d}\big( \tfrac{k}{N}\big)\big|,\quad k_Z=\argmax_{2<k<N-2}\big|Z_{N,d,0}\big(\tfrac{k}{N}\big)\big|,$$
where the selection of $k_V,k_Z$ is arbitrary among the respective maximizers in the case of ties. Set $t_V=k_V/N$ and $t_Z=k_Z/N$.  Our estimator is then defined as
\begin{equation}\label{e:changepoint_location_estimator}
\widehat \eta_{N,d} = \begin{cases} t_V, & |V_{N,d}(t_V)|\geq  |Z_{N,d}(t_Z)|,\\
t_Z, &  |V_{N,d}(t_V)| <|Z_{N,d}(t_Z)|.
\end{cases} %
\end{equation}
The idea behind the estimator is as follows.  Though the behavior of $Z_{N,d}$ and $V_{N,d}$ is quite similar under $H_0$, depending on the type of alternative,  their paths under $H_A$ can be dramatically different. Selecting the maximizers $t_Z$ and $t_V$ of the unweighted $Z_{N,d,0}$ and $V_{N,d}$  is done at the first step so that $\beta$ does not influence the location of the estimated change.  Then, the change point location is chosen among these candidate locations based on the associated value of the original processes $Z_{N,d}$ and $V_{N,d}$.  (Note that, in principle, both $Z_{N,d}$ and $V_{N,d}$ may exceed the critical value: in these cases, the break location is simply selected as the larger among the two.)

The following theorem establishes the consistency of $\widehat \eta_{N,d}$.

\begin{Proposition}\label{th:consistency}
Under the hypotheses of Theorem \ref{th:power}, for $\widehat \eta$ as in \eqref{e:changepoint_location_estimator}, we have
\beq\label{estcon}
\hat{\eta}_{N,d}\stackrel{P}{\to}\eta
\eeq
in the setting of Assumptions \ref{as2}, \ref{as4} and \ref{as5} whenever
\begin{enumerate}[(i)]
\item  $\mu_{1,2}\gg \max\{\mu_1,\mu_2, d^{1/p-1/2}N^{-1/2}\}$, %
\item $|\mu_{1}-\mu_2| \gg \max\{|\mu_{1,2}-\mu_1|,|\mu_{1,2}-\mu_2|\}\gg d^{1/p-1/2}N^{-1/2}$.
\end{enumerate}
In the setting of  Assumptions \ref{as5}-\ref{as6}, \eqref{estcon} holds whenever
\begin{enumerate}[(i')]
\item$\mu_{1,2}\gg \max\{\mu_1,\mu_2, d^{1/p}N^{-1/2}\}$,
\item $|\mu_{1}-\mu_2| \gg \max\{|\mu_{1,2}-\mu_1|,|\mu_{1,2}-\mu_2|\}\gg d^{1/p}N^{-1/2}$.%
\end{enumerate}
\end{Proposition}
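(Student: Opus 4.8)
The plan is to analyze the limiting behavior of the unweighted processes $V_{N,d}(k/N)$ and $Z_{N,d,0}(k/N)$ under $H_A$ with a single change at $k_1 = \lf N\eta\rf$, and to show that in each of the regimes (i)--(ii) (resp.\ (i')--(ii')), one of the two processes has a sharp, asymptotically dominant peak located near $\eta$ while the selection rule in \eqref{e:changepoint_location_estimator} picks out that process. The natural first step is to obtain, uniformly in $k$, a deterministic approximation for each U-statistic $U_{N,d,\ell}(k)$ by its mean. Using Assumption \ref{as4} together with standard moment bounds for U-statistics (as already developed for Theorems \ref{th1} and \ref{th:power}), one shows $d^{-1/p}U_{N,d,\ell}(\lf Nt\rf) = d^{-1/p}\E U_{N,d,\ell}(\lf Nt\rf) + \op{1}$ uniformly over $t$ bounded away from $0$ and $1$, with the stochastic error of order $(Nd)^{-1/2}$ in the weak-dependence case and $N^{-1/2}$ in the strong-dependence case; this is where the rate terms $d^{1/p-1/2}N^{-1/2}$ and $d^{1/p}N^{-1/2}$ in the hypotheses enter.

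Next I would write down the population (mean) version of the processes. Under the one-change model, for $t<\eta$ the pre-$t$ block is homogeneous, so $\E U_{N,d,1}(\lf Nt\rf)=\mu_1$, while $\E U_{N,d,2}(\lf Nt\rf)$ and $\E U_{N,d,3}(\lf Nt\rf)$ are explicit convex combinations of $\mu_1,\mu_2,\mu_{1,2}$ with weights that are rational functions of $t$ and $\eta$; similarly for $t>\eta$. Plugging these into the definitions of $V_{N,d}$ and $Z_{N,d,0}$ yields deterministic ``signal'' functions $v(t)$ and $z(t)$ on $[0,1]$, each piecewise-smooth with a kink at $t=\eta$, vanishing at the endpoints, and with magnitude controlled respectively by $d^{-1/p}|\mu_1-\mu_2|$ and by $d^{-1/p}\max\{|\mu_{1,2}-\mu_1|,|\mu_{1,2}-\mu_2|\}$ — a computation entirely analogous to the one behind $\Delta_{N,d}$ in Theorem \ref{th:power}. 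A short calculus argument shows $v$ and $z$ are each uniquely maximized (in absolute value) at $t=\eta$. In regime (i), the dominant contribution comes from $\mu_{1,2}$ being much larger than $\mu_1,\mu_2$, so $|z(t)|$ has a peak at $\eta$ of strictly larger order than any fluctuation of $v$; in regime (ii) the roles reverse, with $|v(t)|$ dominated by $|\mu_1-\mu_2|$. In either case the peak of the dominant signal dwarfs both (a) the stochastic error from the first step (guaranteed by the $\gg d^{1/p-1/2}N^{-1/2}$, resp.\ $\gg d^{1/p}N^{-1/2}$, conditions) and (b) the entire magnitude of the subdominant process; so $\argmax$ of the empirical dominant process concentrates at $\eta$, and the comparison $|V_{N,d}(t_V)|$ vs.\ $|Z_{N,d}(t_Z)|$ in \eqref{e:changepoint_location_estimator} selects it.

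The main obstacle I anticipate is controlling the $\argmax$ uniformly \emph{near the boundary} and near the kink: the uniform U-statistic approximation in the first step degrades as $t\to 0,1$ because the effective sample sizes $\lf Nt\rf$ and $N-\lf Nt\rf$ shrink, so I cannot simply invoke a uniform LLN on all of $[0,1]$. I would handle this by a truncation argument — first show the maximizer cannot lie in $[0,\delta]\cup[1-\delta,1]$ for small $\delta$ because the signal there is $o$ of its peak value while the boundary fluctuations are controlled by the same moment bounds used for the Darling–Erdős statements (Theorems \ref{th2}, \ref{th4}) — and then apply the uniform approximation only on $[\delta,1-\delta]$, where a standard continuous-mapping/$\argmax$ consistency argument (the signal function has a well-separated unique maximum) finishes the proof. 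Letting $\delta\downarrow 0$ after $\min\{N,d\}\to\infty$ gives \eqref{estcon}. The strong-dependence case (i')--(ii') is identical in structure, with Assumption \ref{as6} replacing Assumption \ref{as4} and the $N^{-1/2}$ rate (hence the $d^{1/p}N^{-1/2}$ threshold) replacing $(Nd)^{-1/2}$, exactly as in the passage from Theorems \ref{th1}--\ref{th2} to Theorems \ref{th3}--\ref{th4}.
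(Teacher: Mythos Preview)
Your overall strategy matches the paper's: compute the deterministic signal functions (the paper calls them $q_V$ and $q_Z$), invoke a uniform approximation (the paper's Lemma~\ref{l:onechange_lemma}, which already gives the $O_P((Nd)^{-1/2})$ and $O_P(N^{-1/2})$ rates uniformly on $[2/N,1-2/N]$, so your boundary truncation worry is in fact already handled), and then run an argmax/selection-rule analysis. Two of your intermediate claims, however, are false, and the second is a genuine gap.

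First, the assertion that ``$v$ and $z$ are each uniquely maximized at $t=\eta$'' is not correct. In regime~(ii) the paper shows only $\argmax_t|q_Z(t)|\subseteq\{0,\eta,1\}$, with the actual location depending on the signs of $\mu_{1,2}-\mu_1$ and $\mu_{1,2}-\mu_2$; and in regime~(i) the signal $q_V$ does \emph{not} peak at $\eta$: when $\mu_{1,2}\gg\mu_1,\mu_2$ one has $|q_V(t)|\approx 2u(1-u)\,\mu_{1,2}$ with $u=(1-\eta)/(1-t)$ on $t\le\eta$, which vanishes at $t=\eta$ and is largest near $t=0$ or $t=2\eta-1$. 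Only the \emph{dominant} signal is maximized at $\eta$ under the corresponding hypothesis.

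Second, and more seriously, your claim that in regime~(i) the dominant signal ``dwarfs the entire magnitude of the subdominant process'' is false: under~(i) both are of order $\mu_{1,2}$, with $|q_Z(\eta)|/\mu_{1,2}\to\eta^2+(1-\eta)^2\ge 1/2$ and $\sup_t|q_V(t)|/\mu_{1,2}\le 1/2$, with equality at $\eta=1/2$. So a pure order-of-magnitude comparison cannot decide the selection rule. The paper resolves this by exploiting that \eqref{e:changepoint_location_estimator} compares $|V_{N,d}(t_V)|$ against the $\beta$-\emph{weighted} process $|Z_{N,d}(t_Z)|$, not $|Z_{N,d,0}(t_Z)|$: since $t_Z\stackrel{P}{\to}\eta$, the factor $(|1-2t_Z|+N^{-1/2})^{-\beta}$ contributes $|1-2\eta|^{-\beta}$, giving
\[
\frac{|Z_{N,d}(t_Z)|}{d^{-1/p}\mu_{1,2}}\;\stackrel{P}{\to}\;\frac{\eta^2+(1-\eta)^2}{|1-2\eta|^{\beta}}\;>\;\tfrac12\;\ge\;\limsup\frac{|V_{N,d}(t_V)|}{d^{-1/p}\mu_{1,2}},
\]
which is what forces the selection rule to return $t_Z$. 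Without invoking the $\beta$-weighting at this step your argument in regime~(i) is incomplete. In regime~(ii), by contrast, your ``dwarfs'' claim is correct (since $\sup_t|q_Z(t)|=O(\max\{|\mu_{1,2}-\mu_1|,|\mu_{1,2}-\mu_2|\})=o(|\mu_1-\mu_2|)$), and the rest of your sketch goes through essentially as the paper does it.
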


The conditions for consistency in Proposition \ref{th:consistency} reflect two qualitatively different  types of possible changes: the divergence $(i)$ (and $(i')$) can occur due to a location shift, and the divergence $(ii)$ (and $(ii')$) can result from scale changes, among other possibilities (in particular, the divergence in $(ii)$ cannot occur due to a location shift).  %

\begin{Remark} \rm
Upon rejection of $H_0$, since $V_{N,d}$ is invariant under location shifts, it may be desirable to know which among $V_{N,d}$ and $Z_{N,d}$ contributed to rejection of $H_0$. By virtue of the joint maximization of these statistics through $T_{N,d}$, one can check which among  $\sup_{0<t<1}|V_{N,d}(t)|/w(t)$ and $\sup_{0<t<1}|Z_{N,d}(t)|/w(t)$ has exceeded the critical value of the test without increasing the overall type I error rate.
\end{Remark}

The estimation and testing procedure above can be extended naturally and readily to estimate multiple changes via recursive procedures such as binary segmentation, which we illustrate in Section \ref{sec-sim}.  Though in principle our estimation procedure can potentially be improved  further via wild binary segmentation \cite{fryzlewicz:2014}  or possibly through customized methods that make separate use of $Z_{N,d}$ and $V_{N,d}$,  for the sake of illustration and simplicity we focus on the case of ordinary binary segmentation in our numerical study.

\medskip
\section{Simulation study}\label{sec-sim}
To evaluate the numerical performance of our procedure, we examine its behavior in simulations based on the choice of %
the weight function
\beq\label{e:weightfunction}
w(u)=\frac{1}{(u(1-u))^\kappa}\quad\;\;0\leq \kappa\leq 1/2,
\eeq
for various values of $\kappa$.  In all settings, unless otherwise stated, we use $\beta=0.9$.  Each reported estimate is based on 2000 independent realizations. The asymptotic variance estimates used in each case are based on the jackknife method described in Section \ref{s:implement}.

\subsection{Size}  We examine empirical size of our test at the sample sizes $N=50,100,250,500$ with $d=N$ and $d=2N$ where $\mathbf X_i$ are independent copies of $\mathbf X=(X_1,\ldots,X_n)$, in the following settings:
\begin{enumerate}
\item $\mathbf X \sim N(0,\mathbf I_d)$;
\item AR(1) coordinates: $X_{i} = \phi X_{i-1} + \varepsilon_i$, with $\phi=0.9$ and $\varepsilon_i\stackrel{iid}\sim N(0,1)$. 
\item $\mathbf X\sim\text{Multinomial}(5d,\mathbf p)$, with $\mathbf p=(p_1,\ldots,p_d)$, where $p_i=(1/i) \big(\sum_{j=1}^d 1/j\big)^{-1}$.
\end{enumerate}
For brevity we display results only for $p=1$, though other cases of $p=2,3$ performed similarly in our simulations. Below are tables for nominal sizes $0.01$ and $0.05$. %
\begin{table}[!ht]\small
\begin{center}
\scalebox{0.8}{
\small
	\begin{tabular}{c c|| c c c c  | c c c c }
	\hline
		  \multicolumn{10}{c}{\footnotesize Nominal size $0.01$}  \\
		\toprule
		& & \multicolumn{4}{c|}{$N=d$}   & \multicolumn{4}{c}{$N=d/2$} \\
		 \cline{3-10}
		&$\kappa$ & 50 & 100 & 250 &500 & 50 & 100 & 250 &500 \\ 
		\hline\hline
		\multirow{4}{*}{Ex $(1)$} &0& .007&    .008 &    .008&     .010&    .004&    .006&    .006&      .007\\
		& .2 & .005  &   .005  &   .009  &  .009   &  .005    & .007  &  .011    &  .008\\
		& .4 &  .013   &   .010    & .007   & .011   &  .015  &  .010 &  .009 &     .012\\
		& .45 &  .024   & .020 &   .021   & .017 &   .017  &   .019  &  .016   &  .011\\ \hline
		\multirow{4}{*}{Ex (2)} &0& .003    & .003  &   .007   &  .010   & .005  &  .010 &   .006   & .007 \\
		& .2 & .004  &   .005 &   .009  &  .006 &   .004 &    .009   & .011  &  .008 \\
		& .4 &  .016  &  .011  &  .010   & .009  &  .014   &  .009  &   .012   &  .009 \\
		& .45 &  .014  &  .017  &  .017  &  .015  &   .023 &   .029 &   .022  &  .020\\ \hline
		\multirow{4}{*}{Ex (3)}&0 & .007  &  .007 &   .009  &   .008  &   .004   &  .005  &   .005 &   .006 \\
		& .2 &  .006   &  .006  &   .008  &  .007  &   .006  &    .010  &  .0075  &   .006 \\
		& .4 &   .025   & .014  &  .011 &   .012  &  .024  &  .019  &  .011  &    .010 \\
		& .45 &  .030  & .027  &  .010  &   .016 &  .034 &   .017   &  .018  &  .016 \\ \hline
		\toprule
	\end{tabular}}\ 
	\scalebox{0.8}{
\small
\begin{tabular}{c c|| c c c c  | c c c c }
	\hline
		  \multicolumn{10}{c}{\footnotesize Nominal size $0.05$}  \\
		\toprule
		& & \multicolumn{4}{c|}{$N=d$}   & \multicolumn{4}{c}{$N=d/2$} \\
		 \cline{3-10}
		&$\kappa$ & 50 & 100 & 250 &500 & 50 & 100 & 250 &500 \\ 
		\hline\hline
		\multirow{4}{*}{Ex $(1)$} & 0 & .026 & .032 & .039 & .048 & .028 & .038 & .047 & .046\\
		& .2 & .036 &   .044&   .040&     .042&    .037&     .034&    .043&    .044\\
		& .4 &  .064 &  .054 &  .044&     .051&    .053&     .058&    .050&    .044\\
		& .45 &  .065& .069 &  .064&     .053&    .061&     .069&    .079&    .069\\ \hline
		\multirow{4}{*}{Ex (2)} &0 &.027 &    .033 &   .036  &  .043  &   .023 &    .035 &   .042  &   .036 \\
		& .2 & .037  & .033  &   .039 &   .045  &   .030  &  .040  &   .050  &   .049 \\
		& .4 &  .060   &  .058   &  .049 &   .047 &  .060  &  .063  &  .047 &   .042 \\
		& .45 & .059  &   .067 &   .061  &    .06   &  .064  &  .071   &  .070  &   .062 \\ \hline
		\multirow{4}{*}{Ex (3)}&0 & .034  &  .033   & .039  &  .041   & .024   & .034   &  .036  &   .033 \\
		& .2 &  .044 &   .038 &   .049  &  .046  & .029 &    .034  &   .035   &  .047 \\
		& .4 &  .070   &  .059   & .058  &  .052 &   .069  &   .058  &   .058  &  .059 \\
		& .45 &  .072 &   .067 &   .061  &  .069  &  .064  &  .064   & .066   &  .064 \\ \hline
		\toprule
	\end{tabular}}
	\end{center}
		\centering
	\caption{Empirical size for nominal size $\alpha=0.01$ and $\alpha=0.05$ tests with $p=1$ in Examples $(1)$--$(3)$.}
	\label{tab2}
\end{table}
In general, the test tends to be more aggressive  as $\kappa$ increases, particularly in the case of $\kappa=0.45$ when it is consistently oversized. (Since the convergence rate for $\kappa=0.5$ is particularly slow, we do not recommend use of $\kappa=0.5$ at this sample size.) Generally the size approximation is closest to the nominal value for the case $\kappa=0.2$ and $\kappa=0.4$ for values of $N\geq 100$.  There is little difference between the settings with $d=2N$ and $d=N$, or between examples (1)--(3).

\subsection{Power.} We study power and estimation performance in a variety of settings.  
 For comparison against other methods suited for general changes, we compare our method with
\begin{itemize}
\item (E-div.): the nonparametric E--divisive method \cite{matteson:james:2014} via the \texttt{ecp} package in \texttt{R}.  This method involves choice of a parameter $\alpha\in(0,2)$; in our simulations we consider ordinary energy distance ($\alpha=1$).  Unless otherwise indicated we use default settings for the \texttt{e.divisive()} method.
\item (MEC): The graph-based max-type edge count method (\citet{chu:chen:2019}) via the \texttt{gSeg} package in \texttt{R}.  This method is suitable for a variety of datatypes and is demonstrated in their work to have some advantages in high dimensions. Following their simulation examples, we apply their method using the $k$-minimum-spanning tree graph based on Euclidean distances with $k=5$.
\end{itemize}
In all of our simulations in this section, we consider tests at the 5\% significance level.

\subsubsection{Single change setting.}\label{s:singlechange}We first examine the effect of the various types of distributional changes for a single change point in high dimensions with $N=d$ at location $\lfloor k_1/N\rfloor =\eta \in\{0.5,0.75\}$. In what follows, we let $\mathbf Y(\phi)=(Y_1,\ldots,Y_d)^\top$ denote vector of  AR(1) coordinates, where $Y_i=\phi Y_{i-1} + \varepsilon_i$ and $\varepsilon_i\stackrel{iid}\sim N(0,1)$.  Also, below, $t(\nu)$ denotes a Students' $t$ random variable with $\nu$ degrees of freedom. For the single change-point setting, we consider:
\begin{enumerate}\setcounter{enumi}{3}
\item Location change:  $\mathbf X_1\stackrel{d}= \mathbf Y(\phi)$ and $\mathbf X_{k_1+1}\stackrel d =\mathbf Y(\phi)+\boldsymbol \mu$, with $\phi=0.5$, $\boldsymbol \mu=(0.2,\ldots,0.2)^\top$ %
\item Covariance change: $\mathbf X_{1}\stackrel{d}= \mathbf Y(\phi)$, and $\mathbf X_{k_1+1}\stackrel{d}= \mathbf Y(\phi')$, with $\phi=0.5$, $\phi'=0.55$. (Note this amounts to a roughly 7.5\% gain in variance in addition to correlation changes.)
\item Tail change: $\mathbf X_{1}\sim N(0,\mathbf I_d)$, and $\mathbf X_{k_1+1}=(X_{k_1+1,1},\ldots X_{k_1+1,d})$ with $X_{k_1+1,i}\stackrel{iid}\sim t(\nu)/\sqrt{\var(t(\nu))}$, where $\nu=7$. (Note the first three moments remain constant throughout.)%
\end{enumerate}

For  E-div, when $N\leq 100$, we set the minimum cluster size to 10; otherwise we leave it as its default value (30).  For MEC, we use the function \texttt{gSeg1()}, as is recommended in the package documentation (\citet{gSeg}) for the single change-point setting.

\begin{table}[!ht]\small
\scalebox{0.9}{
	\begin{tabular}{c c | c c  c  c c| c c  c c c c|}
		\toprule
		& & \multicolumn{5}{c|}{$\eta=0.5$}  &  \multicolumn{5}{c}{$\eta=0.75$}   \\
		\cline{3-12}
		& $N=d$ &  $p=1$ & $p=2$ & $p=3$ & E-div. & MEC & $p=1$ & $p=2$ & $p=3$ & E-div. & MEC \\
		\hline\hline
		\multirow{5}{*}{Ex (4)\hspace{-2ex}}  & 50 & .059   &  .045   &  .055& {\bf .210} & .135 & .057  &  .052 &   .049 & .113  & \textbf{.117} \\
		&100 & .094  &  .071   &  .093& {\bf .620}& .277 &.057 &      .049    & .053& \textbf{.381} & .201\\
		&150 &   .455 &   .446  &  .442 & {\bf .985}& .553&  .058&     .054 &   .065 & \textbf{.901} & .376\\ 
		&200 &  .937 &    .938   & .923 & {\bf .999}& .826& .076 &    .075 &   .068 & \textbf{.998}& .599\\ 
		&250 &  {\bf 1.00 } &      {\bf 1.00}  &   .998 & .991 &.996 & .114   & .101  &  .112& \textbf{1.00}& .817\\ \hline
\multirow{5}{*}{Ex (5)\hspace{-2ex}}  
		& 50 & .090  & \bf .092 &   .087  & .045 & .073& .092  &   .078   &.078& .052 & .076 \\
		&100 & .274   & \bf .291 &   .268 & .069 &  .085 & \textbf{.228}&   .197  & .213 & .052&.069 \\
		&150 &  .598  &  \bf .617  &  .605 & .043& .112 & \textbf{.492} &    .452  &  .478 & .040&.110 \\ 
		&200 &  \bf.884  &  .878&   .878 & .048&  .141 & .755  &  \textbf{.760} &    .742& .041 & .120\\ 
		&250 & \bf .984  &   .982  &  .984 & .042& .207& .907   &  \textbf{.927}     & .925 & .053 & .203\\ \hline
\multirow{5}{*}{Ex (6)\hspace{-2ex}}  & 50 & .087   & .041   &  .064 & .041& \bf.208 & .075  & .067    & .094& .048 & \textbf{.186}\\
		&100 & .258     & .060  &  .144 & .035& \bf .300 &  .227   & .071 &   .204 & .052& \textbf{.337}\\
		&150 & \bf .561  &  .060 &    .347 & .042& .396 & \textbf{.445}    &  .070     &.373& .057& .365\\ 
		&200 & \bf .847  &   .055 &   .624 & .041& .454 & \textbf{.720}    & .083    &.625& .048 & .426 \\ 
		&250 &\bf  .971  &   .066  &  .864  &.045 & .392&  \textbf{.910}&     .092 &   .812 & .037 & .467\\ \toprule
	\end{tabular}
	}
		\centering
	\caption{Power in the single change-point settings of Examples $(4)$--$(6)$ at the changepoint locations $\eta=0.5$ and $\eta = 0.75$.  $T_{N,d}$  is calculated based on $\beta=0.9$ and $w(t)$ as in \eqref{e:weightfunction} with $\kappa=0.4$. We consider $p=1,2,3$ in \eqref{e:Ustats} (corresponding to three leftmost columns in each group). Boldface numbers  indicate the method with the highest power in each row.}
	\label{tab3}
\end{table}

  In general we see that E-div performs extremely well under a mean change as in setting (4), though MEC and our method still have reasonably good power by comparison for $N=d=200$ and larger  when $\eta=0.5$.  Rather surprisingly, both E-div and MEC are severely underpowered in setting (5), and our method substantially outperforms both comparison methods in high dimensions. In setting (6), the MEC method has good power for smaller $d$, but ultimately in high dimensions our method outperforms both MEC and E-div, provided $p\neq 2$.  Note that in (6), both the E-div method and our approach with $p=2$ have severe lack of power, in agreement with the phenomenon concerning limitations of Euclidean energy distance-related metrics  pointed out in \citet{chakraborty:zhang:2021}.  Since our method with $p=2$ is roughly similar to the Euclidean energy distance, this is somewhat expected.

In general our method has its highest power when $\eta=0.5$, as is expected and is common in most change-point methods. When $\eta=0.75$, at this sample size our approach displays a drop in power for mean changes compared to Ediv and MEC, but retains reasonably good power for settings (6) and (7). Though power certainly expected to decrease further in these settings as $\eta$ moves toward the boundary, this is partly compensated by the choice of weight function $w(t)$ with $\kappa=0.4$.  This likely can be further mitigated by increasing $\kappa$, though potentially at the expense of some control over size in moderately sized samples. %

\subsubsection{Multiple change setting.} In practice, the number of change points $R$ and their locations are unknown.  For illustration, below we examine the effect of multiple changes in settings with a random number  $\mathcal R\sim1+\text{Poisson(1)}$ of change points at locations $\eta_i=\lfloor k_i/N\rfloor=i/(R+1)$. For simplicity of exposition, we consider ``switching'' scenarios that alternate between two distributional behaviors  analogous to Examples (4)--(6). %
Specifically, for every integer $j$ with $0\leq 2j \leq \mathcal R$, recalling $k_0=1$, $k_R=N$, we consider

\begin{enumerate}\setcounter{enumi}{6} 
\item Location change:  $\mathbf X_{k_{2j}}\stackrel{d}= \mathbf Y(\phi)$, and if $2j<\mathcal R$,  $\mathbf X_{k_{2j}+1}\stackrel d =\mathbf Y(\phi)+\mu$, with $\phi=0.5$, $\boldsymbol \mu=(0.2,\ldots,0.2)^\top$,  %
\item Covariance change: $\mathbf X_{k_{2j}}\stackrel{d}= \mathbf Y(\phi)$, and if $2j<\mathcal R$, $\mathbf X_{k_{2j}+1}\stackrel{d}= \mathbf Y(\phi')$, with $\phi=0.5$, $\phi'=0.55$.
\item Tail change: $\mathbf X_{k_{2j}}\sim N(0,\mathbf I_d)$, and if $2j<\mathcal R$, $\mathbf X_{k_{2j}+1}=(X_{k_{2j}+1,1},\ldots X_{k_{2j}+1,d})$ with $X_{k_{2j}+1,i}\stackrel{iid}\sim t(\nu)/\sqrt{\var(t(\nu))}$, $\nu=7$. (As in Example (6), the first three moments remain constant throughout.)
\end{enumerate}
In scenarios $(7)$--$(9)$, each replication has its own independent realization of $\mathcal R\in\{1,2,\ldots\}$. In our simulations, we estimate the unknown number of change points $\mathcal R$ and change point locations recursively at the 5\% level of significance using ordinary binary segmentation. To assess estimation performance of both the estimated number of changes $\widehat{ \mathcal R}$ and the estimated change point locations in high dimensions, in the table below, we we consider the high dimensional setting of $N=d$, with $N=300,400,500,600$, and report
\begin{itemize}
\item The mean and median of the {error} $\widehat {\mathcal R} -\mathcal R$
\item The average Rand index (RI) and average adjusted Rand index (ARI).
\end{itemize}
The RI and ARI are measures of  agreement between two clusterings of data. In each realization, we compute the RI and ARI based on the estimated partition dictated by the estimated change point location(s) compared with the true partition for that realization consisting of $\mathcal R+1$ clusters. Values near 1 indicate strong agreement with the clusterings, i.e., between the estimated number of change points and their true values, in addition to estimated change point locations and their true values.  Values near 0 indicate strong disagreement.  For more details on the Rand and adjusted Rand indices, we refer to \citet{rand:1971} and \citet{morey:agresti:1984}.

\begin{table}[!ht]\small
	\begin{tabular}{r c | c c  c c| c  c c c| c  c c c }
		\toprule
		& & \multicolumn{4}{c|}{$T_{N,d}$, ~~ Bin. Seg.} & \multicolumn{4}{c|}{E-div.}& \multicolumn{4}{c}{MEC, Bin. Seg.}\\
		\cline{3-14}\rule{0pt}{2.5ex}
		& & \multicolumn{2}{c}{$\widehat{\mathcal R}-\mathcal R$}  &\multicolumn{2}{|c|}{Rand idx.}&  \multicolumn{2}{c}{$\widehat{\mathcal R}-\mathcal R$}  &\multicolumn{2}{|c|}{Rand idx.}  &  \multicolumn{2}{c}{$\widehat{\mathcal R}-\mathcal R$}  &\multicolumn{2}{|c}{Rand idx.}  \\
		& $N=d$ &  $Q_2$ &  Mean &\multicolumn{1}{|c}{ RI}& ARI & $Q_2$ & Mean &\multicolumn{1}{|c}{ RI}& ARI & $Q_2$ & Mean &\multicolumn{1}{|c}{ RI}& ARI  \\
		\hline\hline
		\multirow{4}{*}{Ex (7)\hspace{-1ex}} &300 & -2  & -1.54 &  .524  & .311& 0 & 0.04 & .985 &\bf .967   &-2 &-1.27& .610 &.425\\
		&400 & -2& -1.50 &  .559 &    .369 &0 & 0.04 & .991 & \bf.978& 0 &-0.99 &.684 & .524\\
		&500 & -2& -1.42 & .578 &    .398 &0 & 0.05 & .994 & \bf.986 & 0 &-0.99 &.684 & .524\\ 
		&600 & -2& -1.33& .604 &    .430 & 0 & 0.05 & .995 & \bf.989  & 0 &-0.26 & .847 & .758\\\hline
		\multirow{4}{*}{Ex (8)\hspace{-1ex}}   
		&300 & -2  & -0.62 &  .800  & \bf.675& -2 & -1.91&  .384& .023&  -2 &  -1.51& .499 & .233    \\
		&400 & 0& -0.09 &  .928 &    \bf.862 & -2 & -1.89 & .390&  .025 & -2 & -1.34 & .565 &.350 \\
		&500 & 0& 0.12 & .969 &   \bf .930 &  -2 &-1.89 & .391& .029 & 0 & -0.96& .666 &  .500  \\
		&600 & 0& 0.20& .978 &    \bf.949 &-2 &-1.98 &  .384&.031 & 0 & -0.52&  .787 &.673 \\\hline
		\multirow{4}{*}{Ex (9)\hspace{-1ex}} 
		&300 & -2  & -0.68 &  .784  & \bf.652& -2 & -1.98& .377 & .021  & -2 &-1.70& .411 & .065  \\
		&400 & 0& -0.16&  .918 &    \bf.845 & -2 & -1.93 & .384 & .019& -2 &-1.71 & .431 &.116  \\
		&500 & 0&  0.05 & .962 &    \bf.918 & -2 & -1.90  & .389 & .019 & -2& -1.59 & .441 &.134\\
		&600 & 0& 0.14& .976 &   \bf .944 &-2 &  -1.91& .383 & .023 & -2 &-1.53 & .471 & .180  \\\toprule

	\end{tabular}
		\centering
	\caption{Estimation accuracy with a random $\mathcal R\sim 1 +\text{Poisson}(1)$ number of change points in the setting of Examples $(7)$--$(9)$. $Q_2$ denotes the median of $\widehat {\mathcal R}-\mathcal R$. $T_{N,d}$ (leftmost group) is calculated based on $\beta=0.9$, $w(t)$ as in  \eqref{e:weightfunction} with $\kappa=0.4$, and in \eqref{e:Ustats} we take $p=1$. Boldface numbers indicate the method with the highest average ARI in each row.}
	\label{tab4}
\end{table}
For E-div, we use the same settings as in the Section \ref{s:singlechange}.  For MEC, we continue to use the \texttt{gSeg1()} function with the same settings to accommodate the possibility of only one change point in each realization; repeated application of this method is recommended in the \texttt{gSeg} package documentation as an approach for detecting multiple changes. We implement ordinary binary segmentation to facilitate direct comparison with applying binary segmentation to our procedure. 

In general our method has good estimation accuracy in settings (8) and (9). We see that in all settings, behavior analogous to the single change-point setting emerges: except for location changes, our method has dramatically higher performance by comparison to Ediv; the same is true in setting (9) compared to MEC, and in setting (8), our method outperforms both approaches, but MEC retains reasonable estimation performance.   For the multiple mean-change setting (7), E-div still dominates for these parameter choices, followed by MEC, and by comparison to the single change-point setting, our estimation performance deteriorates though it retains moderate levels of ARI.  %

\subsection{Discussion} \label{s:MC:discussion}
Among the three types of changes considered, our test has reasonably good performance across an array of alternatives, and can have high power and good estimation accuracy in high-dimensional settings settings where other nonparametric methods are relatively powerless or have difficulty correctly estimating change points.  Though no single method is expected to uniformly perform best against every type of alternative, our procedure still displays moderate power and estimation accuracy in high dimensions even when it is outperformed for location-only alternatives, whereas comparison methods can be severely underpowered for covariance or tail changes, indicating our approach may be a suitable for use when little knowledge about the anticipated change points is to be assumed.  In settings in which location changes are of primary interest, however,  likely a procedure suited specifically to location changes would be more appropriate.

	In addition, though not explored in-depth in this study,  there may be possible advantages of increasing $p$. As evident from Table \ref{tab3}, larger values of $p$ appear to lead to a decrease in power for the given alternatives in settings (4)--(6).  However, for sparse alternatives in which only a small portion of coordinates change, increasing $p$ may provide some benefit; unreported simulation studies suggest increasing $p$ can help with power against sparse location alternatives, in particular, but less so for sparse covariance or sparse tail changes.
	
	Further, unreported simulation studies suggest in most settings it is best to take $\beta$ close to 1; though control over size can degrade when making it extremely close to 1 (e.g., say 0.99) in moderate  $(N\leq 500)$ sample sizes; in general recommend that $\beta\leq 0.9$ when $N\leq 500$, and $\beta \leq 0.5$ for small sample sizes.    $\beta$ also influences the balance of the two statistics $V_{N,d}$ and $Z_{N,d}$ underlying our method; for smaller $\beta$, $Z_{N,d}$ is somewhat deprioritized and our test sensitivity is altered in a nontrivial way, partly becoming less sensitive to location alternatives. On the other hand, increasing $\beta$ over larger samples can lead to increases in power against location alternatives.  To minimize the choice of tuning parameters in practice, we recommend in general setting $\beta=0.9$ which seems to work reasonably well in a variety of scenarios.
	
	Our procedure can likely improved in a variety of ways without making significant changes to the underlying method.  For instance, estimation performance in the multiple change point setting is expected to improve with wild binary segmentation  (Fryzlewicz, 2014).  Over moderate and smaller-sized samples in particular,  power can likely also be improved though refined normalization of $Z_{N,d}(t)$, e.g., replacing $(|1-2t| +N^{-1/2})^{-\beta}$ by $(|1-2t| + h_N(t))^{-\beta}$ or similar, for a function $h_N(t)$ satisfying $h_N(1/2)\sim N^{-1/2}$ but equal to 1 outside a neighborhood of $t=1/2$.  Also note in the multiple change setting,  in \eqref{e:changepoint_location_estimator}, each of $t_V$ and $t_Z$ may concentrate around different change points, but only one of $t_V$ and $t_Z$ are chosen as the segmentation point, which may result in losses in power in the subsequent iteration of the binary segmention.  This could potentially be improved by running parallel segmentations that take into account which statistic(s) exceed the critical value,  which in principle would also provide a more detailed analysis of the observed sequence.
	
	Lastly, in small samples, in lieu of  using our asymptotic approach, a standard permutation-based test for $T_{N,d}$ can instead be used to retain control over size.  However our method as well as E-div have quadratic complexity as the sample size increases, and permutation-based approaches require repeated rearrangements of $O(N^2)$ distances for each permutation that can be computationally burdensome when $N$ is large (even when a small number of random permutations are chosen, c.f. \citet{biau:bleakley:etal:2016}).  Our asymptotic approach has the benefit of avoiding this problem and makes our test suitable for testing potentially longer sequences compared with resampling approaches.
	\counterwithin{figure}{section}

\section{Application: mentions of  U.S.\ governors on Twitter }\label{sec-appl}
\counterwithin{figure}{section}
\counterwithin{table}{section}
We illustrate our method through an application involving Twitter data concerning mentions of U.S. governors.   The data was collected using the full--archive tweet counts endpoint in the Twitter Developer API\footnote{\url{https://developer.twitter.com/en/docs/twitter-api/tweets/counts/introduction}}. The API allows retrieval of the count by day of tweets matching any query from the complete history of public tweets. %
Note that when using the full-archive tweet counts endpoint in the Twitter API, the filters \texttt{-is:retweet -is:reply -is:quote} were applied to remove retweets, replies and quote tweets respectively. %

  Our dataset consists of matching queries that reference any of the 50 U.S. governors from 1/1/21 to 12/31/21, resulting in a series of length and dimension $(N,d) = (365,50)$. Only governors holding office on the date 12/31/21 were included in this dataset; the full list of queried names is given in Table \ref{t:govnames}.

To accommodate variability in the total number of daily mentions among all governors, which range from roughly 1,000 to 20,000 mentions in a given day, a subset of $m=500$ observations on each day were randomly sampled without replacement, resulting in conditionally multivariate hypergeometric observations $\mathbf X_i$ with parameters $m=500$, $\mathbf r_i$, where the vector ${\mathbf r}_i =(r_{1,i},\ldots,r_{50,i})$ contains the observed number of daily mentions for each governor based on the random selection for that day. Test results did not substantially change for other choices of $m\leq 1000$, and repeated tests for different subsets of $m$ gave relatively consistent results.  
\begin{center}
\begin{table}[h]
\scalebox{0.8}{
\begin{tabular}{| m{15cm} |}\hline\
\Centering U.S. Governors holding office on 12/31/21
\\
\hline\hline
{\small Greb Abbott, Charlie Baker, Andy Beshear, Kate Brown, Doug Burgum, John Carney, Roy Cooper, Spencer Cox, Ron DeSantis, Mike Dewine, Doug Ducey, Mike Dunleavy, John Bel Edwards, Tony Evers, Greg Gianforte, Mark Gordon, Michelle Grisham, Kathy Hochul, Larry Hogan, Eric Holcomb, Asa Hutchinson, David Ige, Jay Inslee, Kay Ivey, Jim Justice, Laura Kelly, Brian Kemp, Ned Lamont, Bill Lee, Brad Little, Dan McKee, Henry McMaster, Janet Mills, Phil Murphy, Gavin Newsom, Kristi Noem, Ralph Northam, Mike Parson, Jared Polis, J.B. Pritzker, Tate Reeves, Kim Reynolds, Pete Ricketts, Phil Scott, Steve Sisolak, Kevin Stitt, Chris Sununu, Tim Walz, Gretchen Whitmer, Tom Wolf} \\\hline
\end{tabular}
}
\caption{\label{t:govnames} Governor names used for query in Twitter data collection}
\end{table}
\end{center}

For our tests and estimation, we use $p=1$ and $\beta=0.9$ in accordance with favorable performance of these choices revealed in Section \ref{sec-sim}.  After a change is detected, we estimate the location of change with $\hat{\eta}_{N,d}$, the estimator of \eqref{e:changepoint_location_estimator} with weight function $w(t)=(t(1-t))^\kappa$ with for various $\kappa$ and continue to estimate changes via binary segmentation and tests were repeated in each subsegment until failure to reject the null hypothesis.  Figure \ref{f:gov_changepoints} contains detected change points for $p=1$, which were identical for choices of $\kappa=0.0,0.1,0.2,0.25,0.4$ and $\beta =0.9$ at significance level $0.01$.

Several detected changes apparently coincide with important dates or events in the U.S.\ news cycle.   For instance, the date 9/12/21 (adjacent to 9/11) is detected as a change point, as is the date 11/25/21, coinciding with the weekend following Thanksgiving; some news events are apparent from other dates. For instance, the date 5/21/2021 coincides with several news stories following Texas Governor Greg Abbott's signing of the controversial Texas Heartbeat Act into law\footnote{\url{https://en.wikipedia.org/wiki/Texas_Heartbeat_Act}}; the date 8/1/2021 coincides with a news cycle immediately following Florida Governor Ron Desantis'  executive order on 7/30/2021 concerning masking.\footnote{\url{https://www.flgov.com/2021-executive-orders/}} Interestingly, unreported results from repeated analysis on this dataset across various values of $\beta=0,0.1,0.2,0.3,0.4$ and $\kappa=0,0.2,0.4$ reveals the dates 5/21/21, 8/1/21, and 9/12/2021 are all detected in nearly every case.

\begin{figure}[!ht]
\scalebox{0.5}{\begin{forest} for tree={grow=south,circle,draw,minimum size=1ex,inner sep=2mm,s sep=3mm, l sep+=-3ex}
[Nov. 27
	[Aug. 1
	 	 [~Feb. 2~\ ~
		 	[,no edge, draw=none][,no edge, draw=none][,no edge, draw=none][,no edge, draw=none][,no edge, draw=none][,no edge, draw=none][,no edge, draw=none][,no edge, draw=none] [ Apr. 6
								[,no edge, draw=none][,no edge, draw=none][Mar. 25][,no edge, draw=none][,no edge, draw=none][,no edge, draw=none][May 21
									[May 9][,no edge, draw=none][,no edge, draw=none]
							  	       ][,no edge, draw=none][,no edge, draw=none]
							    ]
		 ][,no edge, draw=none][,no edge, draw=none][,no edge, draw=none][,no edge, draw=none]
	[ Oct. 15
		[ Sep. 12
			[Aug 30] [,no edge, draw=none]
		] [,no edge, draw=none]
		][,no edge, draw=none][,no edge, draw=none][,no edge, draw=none][,no edge, draw=none][,no edge, draw=none]
		][,no edge, draw=none][,no edge, draw=none][,no edge, draw=none][,no edge, draw=none][,no edge, draw=none][,no edge, draw=none][,no edge, draw=none][Dec. 15][,no edge, draw=none][,no edge, draw=none][,no edge, draw=none]
]
\end{forest}}

\caption{\label{f:gov_changepoints} Change-point detection with size $0.01$ tests using $p=1$ for Governor dataset from January 1st, 2021 to December 31st, 2021.  Bisection conducted until failure to reject null hypothesis.  In the figure above, the top of each tree is the first estimated change point; subsequent estimated change points after bisection appear along each respective branch in the order of detection, displayed chronologically left--to--right.}
\end{figure}
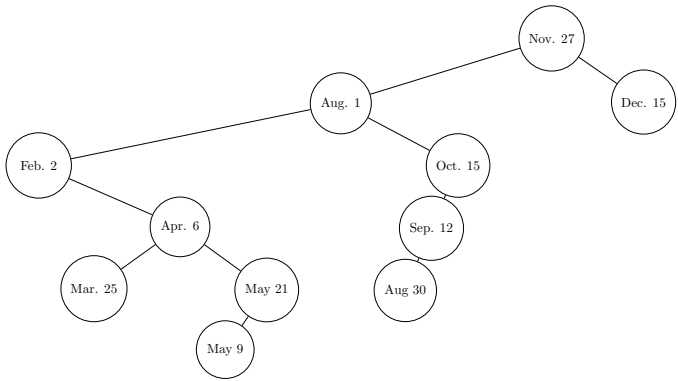

\section{Conclusion}

In this paper, we construct an asymptotic testing procedure suitable for high-dimensional  settings based on combining two maximally selected statistics stemming from $L_p$ norms.   Our method is theoretically supported in high dimensional asymptotic settings and displays convenient limit behavior leading to a straightforward asymptotic test with readily available critical values.  We have demonstrated our test has reasonably good power against a variety of alternatives, and has especially high power when there is no change in location by comparison to other nonparametric methods, making it suited to scenarios when little a priori knowledge is available about the type of possible change points or when location changes are not expected.

	An underlying principle of our approach lies in combining two separate statistics -- each measuring different aspects of the data -- that behave similarly under $H_0$, leading to tractable and convenient limit behavior,  but behave differently under $H_A$, thereby providing increased power.  This general principle can likely be used in other high-dimensional change-point methods, or to develop more discerning change-point tests and estimation procedures. %
	We leave this open as a topic for future research.
\appendix
\section{Some examples}\label{app-ex}
In this section, we provide some examples in which the assumptions in Section \ref{s:theoreticalresults} are satisfied. Throughout, $C>0$ denotes a generic constant independent of $N,d$ whose value may change line-to-line. %

\begin{example}[Independent coordinates]\label{exe1}  {\rm Assume $X_{1,1}, X_{1,2}, \ldots, X_{1,d}$ are independent, and that \eqref{as4-3} holds.  For simplicity write  $\bX_1-\bX_2=(Z_1, Z_2, \ldots, Z_d)^\T$.  Using Rosenthal's inequality \citep[p.~59]{petrov:1995} we get for all $\beta\geq 2$
\begin{align*}
E\Bigg|&\sum_{j=1}^d \big(|Z_j|^p -E|Z_j|^p\big)\bigg|^\beta\\
&\leq C\Bigg(\sum_{j=1}^d E\big||Z_j|^p -E|Z_j|^p\big|^\beta%
+\bigg[\sum_{j=1}^d\var \big(|Z_j|^p\big)\bigg]^{\beta/2}\Bigg),
\end{align*}
where $C>0$ depends only on $p$. Similarly,
\begin{align*}
E\bigg|&\sum_{j=1}^d\big[ g_j(X_{1,j})  -E  g_j(X_{1,j}) \big]\bigg|^\beta\\
& \leq C\Bigg( \sum_{j=1}^d  E\big| g_j(X_{1,j})  -E  g_j(X_{1,j})\big|^\beta + \bigg[\sum_{j=1}^d\var\big(g_j(X_{1,j}) \big) \bigg]^{\beta/2}\Bigg).
\end{align*}
Note for all $\beta\geq 1$, $\E |g_j(X_{1,j})|^\beta \leq C \E |X_{1,j}|^{p\beta}$.  Hence conditions \eqref{as4-1} and \eqref{as4-2} in Assumption \ref{as4} are satisfied if
$$
\limsup_{d\to \infty}\frac{1}{d}\sum_{j=1}^d \E |X_{1,j}|^{2p}<\infty
$$
and
$$
\limsup_{d\to \infty}\frac{1}{d^{\alpha/2}}\sum_{j=1}^d\left[E|X_{1,j}|^{p\alpha} +(E|X_{1,j}|^p)^\alpha\right]<\infty.
$$
}
\end{example}
Next we extend Example \ref{exe1} to dependent coordinates.%
\begin{example}[Linear process coordinates]\label{exe2}{\rm
Suppose for each $d$, $\mathbf X_i=(X_{i,1},\ldots, X_{i,d})^\top$ are given by $X_{i,\ell} = \sum_{j=-\infty}^\infty a_{\ell-j} \varepsilon^{(i)}_j$, where $\varepsilon^{(i)}_j$ are independent identically distributed variables with $E \varepsilon _j^{(i)} = 0$, $\E (\varepsilon_i^{(i)})^2=1$, $\E |\varepsilon_1|^{p\alpha}<\infty$, and the coefficients $a_k$ satisfy
\beq\label{exe2-2}
 |a_{k}|\leq c_1\exp(-c_2|k|)
\eeq
with some $0<c_1, c_2<\infty$. First we show \eqref{as4-1} holds, i.e., that %
\beq\label{exe2-22}
E\bigg|\sum_{j=1}^d\left[ |Z_j|^p-E|Z_j|^p    \right]\bigg|^\alpha\leq Cd^{\alpha/2}
\eeq
where $(Z_1, Z_2, \ldots, Z_d)^\T=\bX_1-\bX_2$. %
 For notational simplicity let $\varepsilon_j = \varepsilon_j^{(1)}-\varepsilon_j^{(2)}$, and for $c_3>0$, define %
$$
\bar{Z}_j=\sum_{|j-\ell|\leq c_3\log d} a_{\ell-j}\varepsilon_\ell+\sum_{|j-\ell|>c_3\log d}a_{\ell-j}\widetilde \varepsilon_\ell,
$$
where the sequence $\{\widetilde \varepsilon_j\}_{j=-\infty}^\infty$ is an independent copy of $\{\varepsilon_j\}_{j=-\infty}^\infty$.  Let $q\geq 1$ be such that $q^{-1}+ r^{-1} =1$, and take any $s>\frac{1}{r}$. The decay of $a_{j,k}$ implies we can choose  $c_3=c_3(\alpha)$ large enough so that 
\begin{align*}
\E |Z_j-\bar{Z}_j|^{r} &= E \bigg| \sum_{|j-\ell|>c_3\log d}a_{\ell-j}\widetilde \varepsilon_\ell \Big|^{r}\\
& \leq \bigg(\sum_{|j-\ell|>c_3\log d}|j-\ell|^{sq}|a_{\ell-j}|^q \bigg)^{\frac{r}{q}}  \bigg( \sum_{|j-\ell|>c_3\log d}|j- \ell|^{-sr} E| \widetilde \varepsilon _\ell |^r \bigg) \\
& \leq C \bigg(\sum_{|j-\ell|>c_3\log d}|j-\ell|^{sq}|\exp\{-qc_2|j-\ell|\} \bigg)^{\frac{r}{q}}\\
& \leq C d^{-\alpha/2}.\notag
\end{align*}%
Using the inequality $||x|^p-|y|^p|\leq C_p(|x|^{p-1}+|y|^{p-1})|x-y|$, this gives
\begin{align}\label{e:xbarj_minus_xj}
E\bigg|\sum_{j=1}^d\big|Z_j|^p-|\bar{Z}_j\big|^p\bigg|^\alpha&\leq CE\Bigg(\sum_{j=1}^d(|Z_j|^{p-1}+|\bar{Z}_j|^{p-1})|Z_j-\bar{Z}_j| \Bigg)^\alpha \\
&\leq  Cd^{\alpha-1} \sum_{j=1}^dE\Big((|Z_j|^{p-1}+|\bar{Z}_j|^{p-1})|Z_j-\bar{Z}_j| \Big)^\alpha \\
& 
\leq  d^{\alpha-1} C\sum_{j=1}^d \big(E|Z_j|^{p \alpha }\big) ^{\frac{p-1}{p}}\big(\E |Z_j-\bar{Z}_j|^{p\alpha} \big)^{1/p}\leq C d^{\alpha/2}.
\end{align}
Thus, it suffices to establish \eqref{exe2-22} for the variables $\bar Z_j$ in place of $Z_j$. Note that by definition, $\bar{Z}_1, \bar{Z}_2, \ldots, \bar{Z}_d$ are $c_3\log d$--dependent random variables. For simplicity, for each $d$ we set $\bar Z_\ell=0$ whenever $\ell>d$.  Now define $n_k= (k-1)\lfloor (\log d)^2\rfloor$, $\zeta_j=|\bar Z_j|^p-E|\bar Z_j|^p$. %
Let
$$
Q_{k,1}=\sum_{\ell=n_{k-1}+1}^{n_k}\zeta_\ell,\;\;\;k=1, 3, \ldots, k^*_1, \qquad
Q_{k,2}=\sum_{\ell=n_{k-1}+1}^{n_k}\zeta_\ell,\;\;\;k=2,4,\ldots, k^*_2,
$$
where $k_1^*,$ and $k_2^*$ are the smallest odd and even integers, respectively, such that $n_{k_i^*}\geq d$. (Note the quantities $Q_{k^*_1,1}$ and $Q_{k^*_2,2}$ respectively may contain fewer than $n_{k^*_1}-n_{k^*_1-1}$ and $n_{k^*_2}-n_{k^*_2-1}$ terms.) By construction, the variables $ Q_{k,1}, k=1,3,\ldots, k^*_1$ are independent and similarly  $ Q_{k,2}$, $k=2,4,\ldots,k^*_2$ are independent. Thus, using Rosenthal's inequality again, we obtain
\begin{equation}\label{e:ros_ex_2}
E\bigg|\sum_{k=1,3,\ldots,k^*_1} Q_{k,1} \bigg|^\alpha \leq c_9\left( \sum_{k=1,3,\ldots,k^*_1}\E |Q_{k,1}|^\alpha+ \Big( \sum_{k=1,3,\ldots,k^*_1} \E (Q_{k,1})^2 \Big)^{\alpha/2} \right)
\end{equation}
 and similarly for $Q_{k,2}$.  We proceed to bound $E |Q_{k,1}|^\alpha$ and $E |Q_{k,1}|^2$ separately.  For  $E |Q_{k,1}|^\alpha$, we have
\begin{equation}\label{e:Q_k_alphabound}
 E |Q_{k,1}|^\alpha \leq (n_k-n_{k-1})^{\alpha-1}\sum_{\ell=n_{k-1}+1}^{n_k} \E |\zeta_\ell|^\alpha \leq C(n_k-n_{k-1})^\alpha \leq C (\log d)^{2\alpha}.
\end{equation}
For $E |Q_{k,1}|^2$, we need a sharper bound.  For each $j$, set
$$
\bar \varepsilon_\ell^{(j)} = \begin{cases}
\varepsilon_\ell  & |\ell-j|\leq c_3 \log d \\  \widetilde \varepsilon_\ell  & |\ell-j|> c_3\log d\\
\end{cases}%
$$
so that $\bar{Z}_j=\sum_{\ell=-\infty}^{\infty} a_{\ell-j}\bar \varepsilon_\ell^{(j)}$.
Now, for each $1\leq \ell,j\leq d$, $j\neq \ell$, set $I_{j,\ell}=\big(j-\lfloor |j-\ell|/2\rfloor,j+\lfloor |j-\ell|/2\rfloor\big]$, and define
$$
\bar Z_{j,\ell} = \sum_{i\in I_{j,\ell}}a_{i-\ell}\bar \varepsilon_i^{(j)}+\sum_{i\notin I_{j,\ell}} a_{i-\ell}\bar \varepsilon_i^{(j,\ell)}
$$
where $\bar \varepsilon_i^{(j,\ell)}$, $i\not\in I_{j,\ell}$  are independent copies of $\bar \varepsilon_i^{(j)}$. %
By construction, for each pair $(j,\ell)$, $j\neq\ell$, we have $ \bar Z_{\ell,j}\stackrel{\cD}= \bar Z_j$, and the variables $ \bar Z_{\ell,j}$ and $ \bar Z_{\ell,j}$ are independent, since $I_{j,\ell} \cap I_{\ell,j}=\emptyset$.  Further, if we define
$$\zeta_{j,\ell}= |\bar Z_{j,\ell}|^p -\E |\bar Z_{j,\ell}|^p%
$$
we clearly have, for some constants  $c_{11},c_{12}>0$ independent of $d$,
$$
\E|\zeta_j-\zeta_{j,\ell}|^2 = \sum_{i \notin I_{j,\ell}} a_{i,j}^2 \leq  c_{11}\exp(-c_{12}|j-\ell|).
$$
Therefore, from the decomposition $E |Q_{k,1}|^2 = \sum_{\ell=n_{k-1}+1}^{n_k}\E \zeta_\ell^2 + 2\sum_{n_{k-1}+1\leq j<\ell\leq n_k}\E\zeta_\ell\zeta_j $, using $\E \zeta_{j,\ell}\zeta_{\ell,j}=0$, we obtain
\begin{align*}
\Big|\sum_{n_{k-1}+1\leq j<\ell\leq n_k}\E \zeta_\ell\zeta_j \Big| &= \Big|\sum_{n_{k-1}+1\leq j<\ell\leq n_k}\E \big[(\zeta_\ell-\zeta_{\ell,j })\zeta_j \big] + \E \big[(\zeta_j-\zeta_{j,\ell})\zeta_{\ell,j })\big]\Big|\\
&\leq \sum_{n_{k-1}+1\leq j<\ell\leq n_k}\Big(\big(\E (\zeta_\ell-\zeta_{\ell,j })^2\E\zeta_j^2) \big)^{1/2} + \big(\E (\zeta_j-\zeta_{j,\ell})^2\E\zeta_{\ell,j }^2\big)^{1/2} \Big)\\
& \leq C (n_{k}-n_{k+1}).
\end{align*}
Thus, 
\begin{equation}\label{e:Q_k_varibound}
\E |Q_{k,1}|^2 \leq  C (n_{k}-n_{k+1}).
\end{equation}
Combining \eqref{e:Q_k_alphabound} and \eqref{e:Q_k_varibound} with \eqref{e:ros_ex_2}, we obtain $E\big|\sum_{k=1,3,\ldots,k^*_1} Q_{k,1} \big|^\alpha \leq C d^{\alpha/2}$.  The same arguments apply to $Q_{k,2}$, which establishes \eqref{exe2-22}, i.e.  \eqref{as4-1} holds. 

Turning to \eqref{as4-2}, observe
$$
|g_j(x)-g_j(y)|\leq C(|x|^{p-1}+|y|^{p-1})|x-y|.
$$
Thus, \eqref{as4-2} %
can be established using the same arguments leading to \eqref{exe2-22} with minor adjustments.

 For  \eqref{as4-3}, note the sequence $g_1(X_1),g_2(X_2),\ldots$ is stationary and thus  $\ca(p)= E g_1(X_1)$ and the existence of $\tau^2$ follows, showing that Assumption \ref{as4-1} holds.
} 
\end{example}

\begin{example}[Multinomial coordinates]\label{exe3} {\rm We assume that $\bX_1$ has a multinomial distribution with parameter $(\gamma_1/d, \gamma_2/d, \ldots, \gamma_d/d, Cd)$, where $C$ is a positive integer, $0<c_1\leq \gamma_i\leq c_2, 1\leq i \leq d, \gamma_1+\gamma_2+\ldots+\gamma_d=d$. Under these assumptions, the coordinates of $\bX_1$ are approximately independent Poisson random variables (cf.\  \citet{mcd} and \citet{deh})
and the conditions of Theorems \ref{th1} and \ref{th2} are satisfied.
}
\end{example}

\medskip
\section{Preliminary lemmas} \label{sec-pr1}

In all asymptotic statements, $O_P(1)$ and $o_P(1)$ denote terms which are bounded in probability and tend to zero in probability, respectively, in the limit $\min\{N,d\}\to\infty$. Throughout, we continue to write $C>0$ to denote a generic constant, independent of $N,d$ whose value may change line-to-line.

Recall the  independent and identically distributed random variables $\zeta_i, 1\leq i \leq N$ defined in \eqref{zedef}.  Let
\begin{align}\label{psize}
\psi_{i,j}=\left(\frac{1}{d}\sum_{\ell=1}^d|X_{i,\ell}-X_{j,\ell}|^p\right)^{1/p}-\theta.%
\end{align}
First we provide a bound for the distance between the underlying U-statistics and their projections in terms of the (dimension-dependent) quantities $\E\psi^2_{1,2}$ and $\E\zeta_1^2$.
\begin{lemma}\label{lem-1} For each pair $1\leq i\neq j\leq N$, let $y_{i,j}= \psi_{i,j}-(\zeta_i+\zeta_j)$. Under either the conditions of Theorem \ref{th1} or \ref{th2} or under Theorem \ref{th3} and \ref{th4}, %
for any $\bar{\alpha}>0$
\begin{align}\label{lem-11}
\max_{1\leq k\leq N}\frac{1}{k^{1+\bar{\alpha}}}\bigg| \sum_{1\leq i<j\leq k}y_{i,j}  \bigg|=O_P\left( \left(E\psi^2_{1,2}+E\zeta_1^2  \right)^{1/2}\right),
\end{align}
\begin{align}\label{lem-12}
\max_{1\leq k\leq N-1}\frac{1}{(N-k)^{1+\bar{\alpha}}}\bigg|  \sum_{k+1\leq i<j\leq N}y_{i,j}  \bigg| =O_P\left( \left( E\psi^2_{1,2}+E\zeta_1^2  \right)^{1/2}\right),
\end{align}
and for any $ \beta\geq 0$,
\begin{align}\label{lem-13}
\max_{1\leq k\leq  N-1}\displaystyle\frac{\left( \left|1-\displaystyle\frac{2k}{N}\right| +\displaystyle\frac{1}{\sqrt N}\right)^{-\beta}}{[k(N-k)]^{\frac{1}{2}+ \bar\alpha }}&\Bigg| \sum_{i=1}^k\sum_{j=k+1}^N y_{i,j} \Bigg|\notag \\
&=O_P\left( \big( \log(N)  N^{ -\bar\alpha} + \log^2(N)  N^{\frac \beta2-2\bar\alpha)}\big)  \left(E\psi^2_{1,2}+E\zeta_1^2  \right)^{1/2}\right).
\end{align}
\end{lemma}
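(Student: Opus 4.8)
The plan is to exploit the Hoeffding decomposition of the relevant U-statistics: by construction, $y_{i,j} = \psi_{i,j} - (\zeta_i + \zeta_j)$ is the ``degenerate remainder'' after subtracting the projections onto the single coordinates $\bX_i$. The key structural facts I would establish first are that, under either set of assumptions, $\E[y_{i,j}] = 0$, that $y_{i,j}$ is conditionally centered in the sense $\E[y_{i,j}\mid \bX_i] = 0$ and $\E[y_{i,j}\mid \bX_j] = 0$ (this is exactly the statement that $\zeta_i + \zeta_j$ is the $L^2$-projection of $\psi_{i,j}$ onto functions of $\bX_i$ plus functions of $\bX_j$), and that $\E y_{i,j}^2 \le C\,\E\psi_{1,2}^2$, with the further crude bound $\E y_{i,j}^2 \le C(\E\psi_{1,2}^2 + \E\zeta_1^2)$ which is the quantity appearing on the right-hand sides. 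These moment bounds follow from the definitions \eqref{psize}, \eqref{zedef} together with the moment control in Assumption \ref{as4}(i) or Assumption \ref{as6}(iii) (via Rosenthal/Minkowski-type estimates on $L_p$ norms of the coordinate differences). I would also record that the whole family $\{y_{i,j}\}$ is uniformly $L^2$-bounded by $C(\E\psi_{1,2}^2+\E\zeta_1^2)^{1/2}$.

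For \eqref{lem-11} and \eqref{lem-12}: the sums $S_k := \sum_{1\le i<j\le k} y_{i,j}$ form, for fixed $N$, a sequence indexed by $k$ that is a completely degenerate U-statistic of order $2$. The standard tool is the U-statistic maximal inequality (a Doob-type inequality for the reverse martingale structure of U-statistics, cf.\ the moment inequalities in \citet{lee:1990}): for a completely degenerate kernel, $\E S_k^2 = \binom{k}{2}\E y_{1,2}^2 \le C k^2 (\E\psi_{1,2}^2+\E\zeta_1^2)$, and more importantly $\E\big(\max_{1\le k\le N} S_k^2\big) \le C N^2 (\E\psi_{1,2}^2+\E\zeta_1^2)$ by the maximal inequality. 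Dividing by $k^{2(1+\bar\alpha)}$ and using monotonicity of $k\mapsto k^{1+\bar\alpha}$ together with a dyadic blocking argument (bound $\max_{2^m \le k < 2^{m+1}} |S_k| / k^{1+\bar\alpha}$ by $2^{-m(1+\bar\alpha)}\max_{k<2^{m+1}}|S_k|$, then sum the resulting geometric series over $m$) gives $\E\big[\max_k k^{-(1+\bar\alpha)}|S_k|\big]^2 \le C(\E\psi_{1,2}^2+\E\zeta_1^2)$, hence the $O_P$ bound by Markov. The estimate \eqref{lem-12} is identical after reindexing $k \mapsto N-k$.

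For \eqref{lem-13}: here the inner object $W_k := \sum_{i=1}^k\sum_{j=k+1}^N y_{i,j}$ is a ``two-sample'' bilinear degenerate form, not a reverse martingale in $k$, so the argument is more delicate --- this is the main obstacle. I would fix $k$ and bound $\E W_k^2$ by expanding: since $\E[y_{i,j}\mid \bX_i]=\E[y_{i,j}\mid\bX_j]=0$, almost all cross terms vanish, leaving only the diagonal $\E W_k^2 = \sum_{i\le k}\sum_{j>k}\E y_{i,j}^2 \le C\,k(N-k)(\E\psi_{1,2}^2+\E\zeta_1^2)$, so that $\E W_k^2 / [k(N-k)]^{1+2\bar\alpha} \le C [k(N-k)]^{-2\bar\alpha}(\cdots)$. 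To pass to the maximum over $k$ with the extra weight $(\,|1-2k/N|+N^{-1/2})^{-\beta}$, I would again use a chaining/blocking argument over $k$, but now one must also control the increments $W_{k+1}-W_k = \sum_{i\le k+1} y_{i,k+1} - \sum_{j>k+1} y_{k+1,j}$ (or, cleaner, bound $\max_k$ directly over $O(\log N)$ dyadic blocks in the distance to the endpoints and in the distance to $N/2$, using within each block a second-moment bound plus a union bound over the block, which is where the $\log N$ and $\log^2 N$ factors are generated). The weight $(\,|1-2k/N|+N^{-1/2})^{-\beta}$ is largest ($\asymp N^{\beta/2}$) near $k\approx N/2$, where $k(N-k)\asymp N^2$; balancing the degenerate second-moment decay $[k(N-k)]^{-2\bar\alpha}$ against this blow-up, and against the milder regions near the endpoints where the weight is $O(1)$ but $k(N-k)$ is small, produces exactly the two-term rate $\log(N)N^{-\bar\alpha} + \log^2(N) N^{\beta/2 - 2\bar\alpha}$ stated. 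I would organize this as: (a) second-moment bound for fixed $k$; (b) increment bound to enable a maximal inequality over a block of consecutive $k$'s; (c) split $\{1,\dots,N-1\}$ into $O(\log N)$ geometric blocks according to $\min(k,N-k)$ and, within the central region, additionally according to $|k - N/2|$; (d) union-bound over blocks and optimize, collecting the logarithmic factors. The bookkeeping in step (d) --- keeping the weight, the $[k(N-k)]^{1/2+\bar\alpha}$ normalization, and the block sizes consistent so that the worst block yields precisely the claimed rate --- is the part requiring the most care.
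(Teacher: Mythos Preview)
Your proposal is correct and follows essentially the same architecture as the paper: exploit the orthogonality of the degenerate remainders $y_{i,j}$, obtain a maximal inequality, then use dyadic blocking plus a union bound over geometric shells.

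Two implementation differences are worth flagging. For \eqref{lem-11}--\eqref{lem-12}, the paper does not invoke the (forward) martingale structure of $S_k=\sum_{j\le k}\xi_j$ via Doob; instead it uses only the orthogonality $E\xi_i\xi_j=0$ and applies Menshov's inequality, picking up an extra $(\log m)^2$ that is harmlessly absorbed by the $k^{\bar\alpha}$ weight. Your Doob route is slightly cleaner (no spurious logarithms), but note the correct structure is a \emph{forward} martingale for the unnormalized degenerate sum, not the reverse-martingale of the normalized U-statistic you cite. For \eqref{lem-13}, rather than your chaining-over-increments outline, the paper isolates a standalone two-parameter Menshov-type bound (Lemma~\ref{l:mensh}),
\[
E\max_{m_1\le k\le m_2}\Big(\sum_{i\le k}\sum_{j>k} y_{i,j}\Big)^2
\le C\,[\log m_2\,\log(N-m_1)]^2\, m_2(N-m_1)\,(E\psi_{1,2}^2+E\zeta_1^2),
\]
and then splits the range only into $\{1\le k\le N/4\}$ (where the $\beta$-weight is bounded and dyadic shells in $k$ give the $\log(N)N^{-\bar\alpha}$ term) and $\{N/4\le k\le N/2\}$ (a single block where the $\beta$-weight is at most $N^{\beta/2}$ and the above display yields the $\log^2(N)N^{\beta/2-2\bar\alpha}$ term), with symmetry handling $k>N/2$. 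This is more direct than the block-by-$|k-N/2|$ subdivision you sketch; your approach would work, but the ``bookkeeping requiring the most care'' that you anticipate is entirely bypassed once the two-parameter maximal inequality is available.
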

\begin{proof} We write
$$
\sum_{1\leq i<j\leq k}[\psi_{i,j}-(\zeta_i+\zeta_j)]=\sum_{j=2}^k\xi_j,
$$
where
$$
\xi_j=\sum_{\ell=1}^{j-1} [\psi_{\ell, j}-(\zeta_\ell+\zeta_j)],\quad\;\;2\leq j \leq N;
$$
for convenience we set $\xi_1=0$. Using the definition of $\zeta_i$, we have
$$
E\xi_i=0,\;\;E\xi_i\xi_j=0,\;\;\mbox{if}\;\;i\neq j\quad\mbox{and}\quad E\xi_i^2\leq  9i\left(E\psi^2_{1,2}+2E\zeta_1^2  \right).
$$
Hence by Menshov's inequality (cf.\ Billingsley, 1968, p.\ 102)
\begin{align}\label{men}
E\max_{1\leq k \leq m}\left(\sum_{i=1}^k \xi_i \right)^2\leq (\log_2(4m))^2\sum_{i=1}^m  9i\left(E\psi^2_{1,2}+2E\zeta_1^2  \right)
\leq C m^2(\log m)^2\left(E\psi^2_{1,2}+E\zeta_1^2  \right).
\end{align}
Using \eqref{men} we get 
\begin{align*}
P&\bigg\{ \max_{1\leq k \leq N}\frac{1}{k^{1+\bar\alpha}}\bigg| \sum_{i=1}^k\xi_i   \bigg|>x  \left(E\psi^2_{1,2}+E\zeta_1^2  \right)^{1/2}  \bigg\}\\
&\leq P\left\{ \max_{1\leq j \leq \log N}\max_{e^{j-1}\leq k \leq e^j}\frac{1}{k^{1+\bar\alpha}}\bigg| \sum_{i=1}^k\xi_i   \bigg|>x  { \left(E\psi^2_{1,2}+E\zeta_1^2  \right)^{1/2}  }  \right\}\\
&\leq \sum_{j=1}^{\log N}P\bigg\{ \max_{e^{j-1}\leq k \leq e^j}\frac{1}{k^{1+\bar\alpha}}\bigg| \sum_{i=1}^k\xi_i   \bigg|>x
  \left(E\psi^2_{1,2}+E\zeta_1^2  \right)^{1/2}   \bigg\}\\
&\leq \sum_{j=1}^{\log N}P\bigg\{ \max_{e^{j-1}\leq k \leq e^j}\bigg| \sum_{i=1}^k\xi_i   \bigg|>x e^{(j-1) (1+\bar\alpha)}   \left(E\psi^2_{1,2}+E\zeta_1^2  \right)^{1/2}   \bigg\}\\
&\leq \frac{C}{x^2}\sum_{j=1}^{\log N}e^{-2j(1+ \bar\alpha)}e^{2j}j^2\\
&\leq \frac{C}{x^2},
\end{align*}
completing the proof of \eqref{lem-11}. By symmetry, \eqref{lem-11} implies \eqref{lem-12}.  We now show \eqref{lem-13}.  First, by Lemma \ref{l:mensh}, for each fixed $1\leq m_1<m_2\leq N-1$,
\begin{align}\label{e:maximal_inequ}
E\max_{m_1\leq k \leq m_2}\bigg(\sum_{i=1}^k\sum_{j=k+1}^N y_{ij}\bigg)^2\leq \big[\log_2(4m_1)\log_2(4(N-m_1)\big]^2 m_2(N-m_1)\left(E\psi^2_{1,2}+E\zeta_1^2  \right).
\end{align}
 For any $x>0$, let $x'= x\log(N) N^{ -\bar\alpha} $.  Using \eqref{e:maximal_inequ}, over the range $1\leq k < N/4$, for all large $N$, we have $\big( \big|1-\frac{2k}{N}\big| +\frac{1}{N}\big)^{-\beta}\leq 2$, and thus
\begin{align}
P&\left\{ \max_{1\leq k \leq N/4}\displaystyle\frac{\left( \left|1-\displaystyle\frac{2k}{N}\right| +\displaystyle\frac{1}{\sqrt N}\right)^{-\beta}}{[k(N-k)]^{\frac{1}{2} + \bar{\alpha}}}\Bigg|  \sum_{i=1}^k\sum_{j=1}^{N-k} y_{ij} \Bigg|>x'  \left(E\psi^2_{1,2}+E\zeta_1^2  \right)^{1/2}  \right\} \notag\\
&\leq P\bigg\{ \max_{1\leq j \leq \log (N/4)}\max_{e^{j-1}\leq k \leq e^j-1}\frac{1}{k^{\frac{1}{2}+\bar{\alpha}}}\bigg|  \sum_{i=1}^k\sum_{j=1}^{N-k} y_{ij} \bigg|>C x'   N^{\frac{1}{2}+\bar\alpha}\left(E\psi^2_{1,2}+E\zeta_1^2  \right)^{1/2}   \bigg\} \notag\\
&\leq \sum_{j=1}^{\log (N/4)}P\bigg\{ \max_{e^{j-1}\leq k \leq e^j-1}\bigg|  \sum_{i=1}^k\sum_{j=1}^{N-k} y_{ij} \bigg|>x' C e^{(j-1) (\frac{1}{2} + \bar \alpha)}N^{\frac{1}{2} + \bar \alpha}  \left(E\psi^2_{1,2}+E\zeta_1^2  \right)^{1/2}  \bigg\} \notag\\
&\leq \frac{C}{(x')^2}\sum_{j=1}^{\log (N/4)}e^{- (1 + 2\bar \alpha)j }N^{-(1+2\bar\alpha)} e^{j}(N-e^{j-1})  \big[\log (4(N-e^{j-1}))\log (4 e^{j})\big]^2 \notag\\
&\leq \frac{C}{(x')^2} [\log(N)]^2 N^{-2\bar\alpha}  \sum_{j=1}^{\log (N/4)}j^2 e^{-j 2\alpha} \notag\\
&\leq \frac{C}{x^2}.\label{e:Op_nobeta}
\end{align}
Over the range $N/4\leq k \leq N/2$, $[k(N-k)]^{\bar{\alpha}/2}\geq  C N^{\bar\alpha}$, and applying \eqref{e:maximal_inequ} again,
\begin{align}
P&\left\{ \max_{N/4 \leq k \leq N/2}\displaystyle\frac{\left( \left|1-\displaystyle\frac{2k}{N}\right| +\displaystyle\frac{1}{\sqrt N}\right)^{-\beta}}{[k(N-k)]^{\frac{1}2+\bar{\alpha}}}\left|  \sum_{i=1}^k\sum_{j=1}^{N-k} y_{ij} \right|>x  \left(E\psi^2_{1,2}+E\zeta_1^2  \right)^{1/2}  \right\}\notag\\
&\leq P\left\{  \max_{N/4 \leq k \leq N/2}\left|  \sum_{i=1}^k\sum_{j=1}^{N-k} y_{ij} \right|>x' C  N^{1+2\bar\alpha-\frac{\beta}{2}} \left(E\psi^2_{1,2}+E\zeta_1^2  \right)^{1/2}   \right\}\notag\\
& \leq \frac{C}{x^2}\big( N^{\beta-4\alpha}\big[\log N\big]^4\big)\label{e:Op_withbeta}
\end{align}

By \eqref{e:Op_nobeta} and \eqref{e:Op_withbeta},  
\begin{align*}
\max_{1\leq k\leq  N/2}\frac{\displaystyle \Big( \Big|1-\frac{2k}{N}\Big| +\frac{1}{\sqrt N}\Big)^{-\beta}}{[k(N-k)]^{\frac{1}{2}+\bar{\alpha}}}&\Bigg| \sum_{i=1}^k\sum_{j=k+1}^N y_{i,j} \Bigg|\notag \\
&=O_P\left( \Big( \log(N)  N^{-\bar \alpha } + [\log(N)]^2  N^{\frac{\beta}{2}-2\bar\alpha}\Big)  \left(E\psi^2_{1,2}+E\zeta_1^2  \right)^{1/2}\right).
\end{align*}
By symmetry, the same bound holds for the maximum taken over $N/2<k \leq N$,  completing the proof of \eqref{lem-13}.

\end{proof}

Next we consider approximations for the sums of the projections. Let
$$
\cs^2(d)=E\zeta_1^2\quad\quad\mbox{and}\quad \quad \cmm(d,\nu)=E|\zeta_1|^{\nu}.
$$
\begin{lemma}\label{lem-2} Suppose $ \cmm(d,\nu)<\infty$. If either the conditions of Theorem \ref{th1} or \ref{th2} or those of Theorem \ref{th3} and \ref{th4}, are satisfied, then for each $N$ and $d$ we can define independent Wiener processes $\{W_{N,d,1}(x), 0\leq x \leq N/2\}$ and $\{W_{N,d,2}(x), 0\leq x \leq N/2\}$ such that,
\beq\label{lem-21}
\sup_{1\leq x \leq N/2}\frac{1}{x^{\bar\alpha}}\left|\frac{1}{\cs(d)}\sum_{i=1}^{\lf x \rf}\zeta_i-W_{N,d,1}(x)  \right|=O_P(1),%
\eeq
and
\begin{align}\label{lem-22}
&%
\sup_{N/2\leq x \leq N-1}\frac{1}{(N-x)^{\bar\alpha}}\left|\frac{1}{\cs(d)}\sum_{i=\lf x \rf+1}^{N}\zeta_i-W_{N,d,2}(N-x)  \right|=O_P(1)%
\end{align}
for any {$\bar\alpha>\max\{1/4, 1/\nu\}$}. %
\end{lemma}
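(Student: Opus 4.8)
The plan is to read this as a Koml\'os--Major--Tusn\'ady--type strong invariance principle for the partial sums of the row-wise i.i.d.\ array $\{\zeta_i\}_{i=1}^{N}$, and to prove it by a Skorokhod embedding, arranging every error estimate so that its implied constants are uniform in the dimension $d$.

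\emph{Reduction and the uniform moment bound.} First I would standardize, putting $\eta_i=\zeta_i/\cs(d)$; for fixed $(N,d)$ these are i.i.d.\ with mean $0$, variance $1$, and $E|\eta_1|^\nu=\cmm(d,\nu)/\cs^\nu(d)$. The one substantive input is that this standardized $\nu$-th moment is bounded uniformly in $N$ and $d$. Under Assumptions \ref{as4}--\ref{as5}, a Rosenthal-type estimate for the projection $\zeta_1=H(\bX_1)-\theta$ (treating $H$ as a smooth functional of the weakly dependent coordinate array; cf.\ the moment bounds of Appendix \ref{app-mom}) gives $\cmm(d,\nu)\le Cd^{-\nu/2}$, while $\cs^2(d)=\sigma^2/(4d)+o(1/d)$ with $\sigma^2>0$ by \eqref{th2-3}; under Assumptions \ref{as6}--\ref{as7}, $\cmm(d,\nu)$ is bounded and $\cs^2(d)\to\gamma^2/4>0$ by Theorem \ref{th4}. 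In either regime $\sup_{N,d}E|\eta_1|^\nu<\infty$, and only this bound is used afterwards, so the two dependence settings are handled at once.

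\emph{Embedding and transfer of the error.} For each $(N,d)$ I would embed $\eta_1,\dots,\eta_{\lfloor N/2\rfloor}$ into a Wiener process $W_{N,d,1}$ by the Skorokhod scheme: there are i.i.d.\ increments $\tau_i$ with $E\tau_i=1$, $E\tau_i^{\nu/2}\le CE|\eta_i|^\nu\le C$, and $\cs(d)^{-1}\sum_{i\le k}\zeta_i=W_{N,d,1}(T_k)$ with $T_k=\tau_1+\cdots+\tau_k$. Bounding the moments of $\max_{k\le N/2}|T_k-k|$ by von Bahr--Esseen combined with Doob's inequality when $2<\nu<4$, or by Rosenthal's combined with Doob's when $\nu\ge4$, gives $\max_{1\le k\le N/2}|T_k-k|=O_P(\delta_N)$ with $\delta_N=N^{2\max\{1/4,1/\nu\}}$ and an implied constant depending only on $\sup_{N,d}E|\eta_1|^\nu$; in particular $T_{\lfloor N/2\rfloor}\le N$ with probability tending to $1$. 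On the event where both of these hold,
$$
\max_{1\le k\le N/2}\Big|\frac{1}{\cs(d)}\sum_{i=1}^{k}\zeta_i-W_{N,d,1}(k)\Big|=\max_{1\le k\le N/2}\big|W_{N,d,1}(T_k)-W_{N,d,1}(k)\big|\le\sup_{\substack{0\le s,t\le N\\|s-t|\le C\delta_N}}\big|W_{N,d,1}(s)-W_{N,d,1}(t)\big|,
$$
and by the (distribution-free) modulus of continuity of Brownian motion the right-hand side is $O_P\big(\delta_N^{1/2}(\log N)^{1/2}\big)=O_P\big(N^{\max\{1/4,1/\nu\}}(\log N)^{1/2}\big)$. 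Splitting the weighted supremum over dyadic blocks $2^j\le k<2^{j+1}$ and applying the same two estimates on each block, exactly as in the proof of Lemma \ref{lem-1}, upgrades this to \eqref{lem-21}: the resulting series is geometric and converges precisely because $\bar\alpha>\max\{1/4,1/\nu\}$, the strict inequality absorbing the logarithmic factor from the modulus of continuity (and the passage from $\sum_{i\le\lfloor x\rfloor}$ to $\sum_{i\le k}$ and from $W(x)$ to $W(k)$ costs only a further $O_P(\sqrt{\log N})$). Running the identical construction on the time-reversed block $\zeta_N,\zeta_{N-1},\dots,\zeta_{\lfloor N/2\rfloor+1}$ produces $W_{N,d,2}$ and gives \eqref{lem-22}; since it uses a disjoint subset of the independent $\zeta_i$ together with independent embedding randomness, $W_{N,d,1}$ and $W_{N,d,2}$ are independent.

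\emph{Main obstacle.} The genuinely delicate point is the uniformity in $d$: the classical strong approximation theorems are stated for a fixed sequence, so the key is to run the embedding with \emph{a priori} moment control and to replace every almost-sure rate statement (Marcinkiewicz--Zygmund, LIL) by a moment or maximal inequality, so that all the $O_P$-bounds above depend only on $\sup_{N,d}E|\eta_1|^\nu$. Securing this last supremum---which rests on the high-dimensional non-degeneracy Assumptions \ref{as5}/\ref{as7} together with Rosenthal-type moment bounds for the projections $\zeta_1$---is itself the real work; once it is in place, the remainder is routine.
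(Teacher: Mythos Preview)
Your proposal is correct and follows essentially the same route as the paper: Skorokhod embedding, von Bahr--Esseen/Marcinkiewicz--Zygmund together with a maximal inequality for the stopping-time deviations, modulus-of-continuity bounds for Brownian motion, and a dyadic-block argument, with the uniform-in-$d$ bound $\sup_{N,d}\cmm(d,\nu)/\cs^\nu(d)<\infty$ (supplied by Lemmas~\ref{appe-mom}/\ref{appesecmom}) as the single substantive input. The only cosmetic difference is organizational---the paper first proves a weighted stopping-time bound $\max_k k^{-\lambda}|T_k-k\cs^2(d)|=O_P(1)$ for $\lambda>2/\nu$ and then runs a second dyadic argument on the Brownian increments, whereas you fold both steps into one dyadic pass.
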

\begin{proof} Using the Skorokhod embedding scheme \citep[e.g.,][]{brei}  we can define Wiener processes $W_{N,d,3}(x)$ such that
$$
\sum_{i=1}^k\zeta_i=W_{N,d,3}\left(\ct_i\right),\qquad
\ct_i=\ccr_1+\ccr_2+\ldots +\ccr_i,
$$
where for each $N$ and $d$ the random variables $\ccr_1, \ccr_2. \ldots, \ccr_{N/2}$ are independent and identically distributed with
$$
E\ccr_1=\cs^2(d)\quad\quad\mbox{and}\quad\quad E\ccr_1^{\nu/2}\leq c_1\cmm(d,\nu).
$$
We can assume without loss of generality that $2<\nu<4$. Using the Marcinkiewicz--Zygmund and von Bahr--Esse\'en inequalities \citep[p. 82]{petrov:1995} we get
\begin{align*}
E\max_{1\leq j\leq k}\left|\sum_{i=1}^j(\ccr_i-\cs^2(d))\right|^{\nu/2}&\leq C k\left(E\ccr_1^{\nu/2}+(\cs^2(d))^{\nu/2}\right)\\
&\leq Ck\left( \cmm(d,\nu) +(\cs^2(d))^{\nu/2} \right).
\end{align*}
We get for all  $\nu/2<\lambda$ that 
\begin{align}
P&\left\{\max_{1\leq k \leq N/2}\frac{1}{k^{\lambda}}\left| \sum_{i=1}^{k}(\ccr_i-\cs^2(d))   \right|>x  \left(\cmm(d,\nu)+(\cs^2(d))^{\nu/2}\right)^{2/\nu} \right\}\notag\\
&\leq P\left\{\max_{1\leq j\leq \log (N/2)}\max_{e^{j-1}\leq  k \leq e^j}\frac{1}{k^{\lambda}}\left| \sum_{i=1}^{k}(\ccr_i-\cs^2(d))   \right|>x  \left(\cmm(d,\nu)+(\cs^2(d))^{\nu/2}\right)^{2/\nu}\right\}\notag\\
&\leq P\left\{\max_{1\leq j\leq \log (N/2)}\max_{e^{j-1}\leq  k \leq e^j}\left| \sum_{i=1}^{k}(\ccr_i-\cs^2(d))   \right|>x e^{(j-1){\lambda}} \left(\cmm(d,\nu)+(\cs^2(d))^{\nu/2}\right)^{2/\nu}\right\}\notag\\
&\leq \sum_{j=1}^{\log (N/2)}P\left\{\max_{e^{j-1}\leq  k \leq e^j}\left| \sum_{i=1}^{k}(\ccr_i-\cs^2(d))   \right|^{\nu/2}>x^{\nu/2} e^{(j-1){\lambda}\nu/2} \left(\cmm(d,\nu)+(\cs^2(d))^{\nu/2}\right)\right\}\notag\\
&\leq \frac{C}{x^{\nu/2}}\sum_{j=1}^{\log (N/2)}e^je^{-(j-1){\lambda}\nu/2} \notag\\
&\leq \frac{C}{x^{\nu/2}}.\label{e:t_k_close_to_ ks^2(d)}
\end{align}
For $s>0$, and any $c_1>0$, let
$$
h({s},d)=c_1{s}^{\lambda}\left(\cmm(d,\nu)+(\cs^2(d))^{\nu/2}\right)^{2/\nu}
$$
 The bound \eqref{e:t_k_close_to_ ks^2(d)} implies
$$\lim_{c_1\to\infty}\liminf_{N\to\infty}\P\Big\{\ct_{\lfloor x\rfloor} \in \big\{y>0: |y- x\cs^2(d)| \leq  h(x,d)\big\},~ 1\leq x \leq N/2\Big\} = 1.$$ In turn, this implies%
\begin{align*}
\lim_{c_1\to \infty}\liminf_{N\to\infty}P\biggl\{&\max_{1\leq x \leq N/2}\frac{1}{\cs(d)x^{\bar\alpha}}\left|  W_{N,d,3}\left( \ct_{\lf x\rf} \right)-W_{N,d,3}\left(\cs^2(d)x  \right)\right|\\
&\leq \sup_{1\leq x\leq N/2}\sup_{\{ y>0: |y-\cs^2(d)x|\leq h(x,d)\}}\frac{1}{\cs(d)x^{\bar\alpha}}\left|  W_{N,d,3}\left( y \right)-W_{N,d,3}\left(\cs^2(d)x  \right)\right|
\Biggl\}=1.
\end{align*}
By the scale transformation of the Wiener process
\begin{align}
\sup_{1\leq x\leq N/2}&\sup_{\{y>0:|y-\cs^2(d)x|\leq h(x,d)\}}\frac{1}{\cs(d)x^{\bar\alpha}}\left|  W_{N,d,3}\left( y \right)-W_{N,d,3}\left(\cs^2(d)x  \right)\right|\notag\\
&\stackrel{\cD}{=}
\sup_{1 \leq x\leq N/2}\sup_{\{z>0:|z-x|\leq h(x,d)/\cs^2(d)\}}\frac{1}{x^{\bar\alpha}}\left| W(z)-W(x)   \right|\notag\\
&\leq 
\sup_{1 \leq x\leq N/2}\sup_{\{z>0:|z-x|\leq c_2 x^\lambda \}}\frac{1}{x^{\bar\alpha}}\left| W(z)-W(x)   \right|,\label{e:ss_bound}
\end{align}
where we used that $\sup_{d\geq 1}\left(\cmm(d,\nu)+(\cs^2(d))^{\nu/2}\right)^{2/\nu}/\cs^2(d)\leq c_2<\infty$ for some $c_2>0$ by Lemma \ref{appe-mom}.  Recall \citep[e.g.,][p. 24]{csorgo:revesz:1981} for every $\epsilon>0$ there exists a $c_0(\epsilon)$ such that
$$
\P\Big\{\sup_{0 \leq x\leq T}\sup_{\{z>0:|z-x|\leq h \}}\left| W(z)-W(x)   \right|>h^{1/2} y\Big\} \leq \frac{c_0(\epsilon) T}{h} \exp\Big(-\frac{y^2}{2+\epsilon}\Big),
$$
for any $T>0$ and $0<h<T$. Therefore, for any $M>0$, $c>0$, and  $ \lambda<2\bar\alpha$, we have
\begin{align*}
\P\Big\{\sup_{1 \leq x\leq N}&\sup_{\{z>0:|z-x|\leq c x^\lambda \}}\frac{1}{x^{\bar\alpha}}\left| W(z)-W(x)   \right|> M \Big\}\\
& \leq \sum_{j=1}^{\log N }\P\Big\{\sup_{e^{j-1}\leq x\leq e^j}\sup_{\{z>0:|z-x|\leq c x^\lambda \}}\frac{1}{x^{\bar\alpha}}\left| W(z)-W(x)   \right|> M\Big\}\\
& \leq \sum_{j=1}^{\log N }\P\Big\{\sup_{e^{j-1}\leq x\leq e^j}\sup_{\{z>0:|z-x|\leq c x^\lambda \}}\left| W(z)-W(x)   \right|> Me^{\bar\alpha (j-1)}\Big\}\\
& \leq \sum_{j=1}^{\log N }\P\Big\{\sup_{0\leq x\leq e^j}\sup_{\{z>0:|z-x|\leq c e^{j\lambda} \}}\left| W(z)-W(x)   \right|> c^{-1}e^{-\bar\alpha}  (c e^{j\lambda/2})  \cdot M e^{j(\bar\alpha-\lambda/2)} \Big\}\\
& \leq C \sum_{j=1}^{\log N }  e^{j(1-\lambda)}\exp\Big(-c_3 M^2 e^{j(2\bar\alpha-\lambda)} \Big)\\
& \leq C \sum_{j=1}^{\infty }  e^{j(1-\lambda)}\exp\Big(-c_3 M^2 e^{j(2\bar\alpha-\lambda)} \Big)\\
&= C \mathcal G(M).
\end{align*}
Note $\mathcal G(M)<\infty$ since  $\lambda<2\bar\alpha$, and clearly $\mathcal G(M)\to0$ as $M\to\infty$.  Returning to \eqref{e:ss_bound}, this implies
$$
\sup_{1 \leq x\leq N/2}\sup_{\{z>0:|z-x|\leq c_2 x^\lambda \}}\frac{1}{x^{\bar\alpha}}\left| W(z)-W(x)   \right| = O_P(1),\quad N\to\infty,
$$
which establishes the result.

\end{proof}

\medskip
\section{Proofs of Theorems \ref{th1}--\ref{th4}}\label{sec-pr2}

\medskip
\noindent
{\bf Proof of Theorem \ref{th1}.} Let $2/N\leq t \leq 1-2/N$.  With $k=\lfloor Nt\rfloor$, and $k^* = N-k$, note
\begin{align}
\sigma^{-1}(Nd)^{1/2} V_{N,d}(t) &= N^{1/2}\frac{k}{N}\frac{k^*}{N} \frac{2 d^{1/2}}{\sigma}\bigg(\frac{1}{k(k-1)} \sum_{1\leq i<j\leq k}\psi_{i,j} -\frac{1}{k^* (k^*-1)} \sum_{k+1\leq i<j\leq N}\psi_{i,j}\bigg)\notag\\
& = N^{1/2}\frac{k}{N}\frac{k^*}{N} \frac{2 d^{1/2}}{\sigma}\bigg(\frac{1}{k} \sum_{i=1}^k\zeta_i  -\frac{1}{k^*} \sum_{i=k+1}^N \zeta_{i}\bigg) + R^V_{N,d}(t)\notag\\
& = N^{-1/2}\frac{2 d^{1/2}}{\sigma} \bigg( \sum_{i=1}^k\zeta_i  -\frac{k}{N}\sum_{i=1}^N \zeta_{i}\bigg) + R^V_{N,d}(t),\label{e:sum_of_iid_terms}
\end{align}
where, letting $y_{ij}= \psi_{i,j} -\zeta_i-\zeta_j$,
$$R^V_{N,d}(t)= N^{1/2}\frac{k}{N}\frac{k^*}{N} \frac{2 d^{1/2}}{\sigma}\Big(\frac{1}{k(k-1)} \sum_{1\leq i<j\leq k} y_{ij}-\frac{1}{k^* (k^*-1)} \sum_{k+1\leq i<j\leq N}y_{ij}\Big).$$ 
For $\bar\alpha>0$, observe
$$ %
N^{1/2-\bar{\alpha}}\frac{|R_{N,d}(t)|}{(t(1-t))^{\bar{\alpha}}}=  \frac{2 d^{1/2}}{\sigma}\Big(\frac{(1-t)^{1-\bar\alpha}}{k^{\bar\alpha}(k-1)} \sum_{1\leq i<j\leq k} y_{ij}-\frac{t^{1-\bar\alpha}}{(k^*)^{\bar\alpha} (k^*-1)} \sum_{k+1\leq i<j\leq N}y_{ij}\Big).
$$
Thus, if $0<\bar\alpha<1$, Lemmas \ref{lem-1} and \ref{appe-mom} imply
 $$N^{1/2-\bar{\alpha}}\sup_{2/N\leq t \leq 1-2/N}\frac{|R_{N,d}(t)|}{(t(1-t))^{\bar{\alpha}}}=O_P(1).$$
 Similarly, writing $g_N(t) =\big(|1-2t| +\tfrac{1}{\sqrt N}\big)^{-\beta}$, we have%
\begin{align}
\sigma^{-1}(Nd)^{1/2}Z_{N,d}(t) &= \frac{g_N(t)}{N^{3/2}} \frac{2d^{1/2}}{\sigma}\bigg(\sum_{i=1}^k\sum_{j=k+1}^N \psi_{i,j} -\frac{k k^*}{N^2}\sum_{i=1}^N\sum_{j=1}^N\psi_{i,j}\bigg)\notag\\
& = \frac{g_N(t)}{N^{3/2}}  \frac{2d^{1/2}}{\sigma}\bigg(\sum_{i=1}^k\sum_{j=k+1}^N (\zeta_i+\zeta_j)-\frac{k k^*}{N^2}\sum_{i=1}^N\sum_{j=1}^N(\zeta_i+\zeta_j)\bigg)+ R^Z_{N,d}(t)\notag\\
& =  g_N(t) \bigg(1-\frac{2k}{N}\bigg) N^{-1/2}\frac{2d^{1/2}}{\sigma}  \bigg( \sum_{i=1}^k\zeta_i  -\frac{k}{N}\sum_{i=1}^N \zeta_{i}\bigg)+ R^Z_{N,d}(t),  \label{e:Z_Nd_expansion}
\end{align}
where
$$R^Z_{N,d}(t)%
= \sigma^{-1}(Nd)^{1/2} t(1-t) \bigg(\frac{g_N(t)}{kk^*}\sum_{i=1}^k\sum_{j=k+1}^N y_{ij}-\frac{g_N(t)}{N^2}\sum_{i=1}^N\sum_{j=1}^Ny_{ij}\bigg).
$$
Now, for every $0<\delta<1/2$,
\begin{align*}
N^{1/2-\bar{\alpha}} &\frac{|R_{N,d}(t)^Z|}{(t(1-t))^{\bar{\alpha}}}\\
&=  \frac{d^{1/2}}{\sigma} \bigg(  (t(1-t))^{\frac{1}2-\delta}\frac{N^{\bar \alpha-2\delta}g_N(t)}{(kk^*)^{\frac{1}{2}+\bar\alpha-\delta }}\sum_{i=1}^k\sum_{j=k+1}^N y_{ij}-\frac{[t(1-t)]^{1-\bar\alpha} g_N(t)}{N^{\frac{3}{2}+\bar\alpha}}\sum_{i=1}^N\sum_{j=1}^Ny_{ij}\bigg).
\end{align*}
Applying Lemmas \ref{lem-1} and \ref{appe-mom}, provided $\bar\alpha>\delta$, we obtain
 $$N^{1/2-\bar{\alpha}}\sup_{2/N\leq t \leq 1-2/N}\frac{|R^Z_{N,d}(t)|}{(t(1-t))^{\bar{\alpha}}}=O_P\Big(N^{-\delta} \log N + (\log N)^2 N^{\frac \beta2-\bar\alpha}\Big) + O_P(N^{\frac \beta2-\bar \alpha-\frac{1}2}\big).$$ Now, let
\begin{align}\label{e:B_Nd}
B_{N,d}(t)=\left\{
\begin{array}{ll}
\displaystyle N^{-1/2}\left[W_{N,d,1}(Nt)-t\left(W_{N,d,1}(N /2) +W_{N,d,2}(N /2) \right)\right],\;\;\;0\leq t \leq 1/2
\vspace{.3cm}\\
\displaystyle N^{-1/2}\left[-W_{N,d,2}(N-Nt)+(1-t) \left(W_{N,d,1}(N /2)+W_{N,d,2}(N /2) \right) \right],\;\;\;1/2\leq t \leq 1.
\end{array}
\right.
\end{align}
The process $\{B_{N,d}(t), 0\leq t\leq 1\}$ is Gaussian, and a straightforward computation of its covariance function  shows it is a Brownian bridge in law for all $N$ and $d$.  By \eqref{e:sum_of_iid_terms} and \eqref{e:Z_Nd_expansion}, 
\begin{align*}
\sigma^{-1}(Nd)^{1/2}Z_{N,d}(t)-R^Z_{N,d}(t)&=(1-2t)g_N(t)\big[\sigma^{-1}(Nd)^{1/2}V_{N,d}(t)-R^V_{N,d}(t)\big].
\end{align*}
 Observe $2 d^{1/2}/\sigma = (1/\mathcal s(d))\big(1 + o(1)\big)$ by Lemma \ref{appe-mom}.  Thus, based on expressions \eqref{e:sum_of_iid_terms} and \eqref{e:Z_Nd_expansion}, using that $|(1-2t)g_N(t)|\leq 1$, Lemma \ref{lem-2} implies
\begin{align*}
\sup_{2/N\leq t \leq 1-2/N}&\frac{|\sigma^{-1}(Nd)^{1/2}Z_{N,d}(t)-R^Z_{N,d}(t) - g_N(t)(1-2t)B_{N,d}(t)|}{(t(1-t))^{\bar{\alpha}}}\\
&\leq \sup_{2/N\leq t \leq 1-2/N}\frac{g_N(t)|1-2t||\sigma^{-1}(Nd)^{1/2}V_{N,d}(t)-R^V_{N,d}(t) - B_{N,d}(t)|}{(t(1-t))^{\bar{\alpha}}}\\
&\leq \sup_{2/N\leq t \leq 1-2/N}\frac{|\sigma^{-1}(Nd)^{1/2}V_{N,d}(t)-R^V_{N,d}(t) - B_{N,d}(t)|}{(t(1-t))^{\bar{\alpha}}}\\
&=O_P(N^{\bar{\alpha}-1/2}) + o_P(1)
\end{align*} %
for all $\bar \alpha>\frac{1}{4}$.  Thus, putting together Lemmas \ref{lem-1} and \ref{lem-2}, taking $\bar\alpha$ and $\delta$ so that $\delta<\frac \beta2<\bar{\alpha}\leq 1/2$,
\begin{align}\label{pr1-2}
&\sup_{2/N\leq t \leq 1-2/N}\frac{1}{(t(1-t))^{\bar{\alpha}}}
\left|\sigma^{-1}(Nd)^{1/2}Z_{N,d}(t)- g_N(t)(1-2t)B_{N,d}(t)\right| \notag\\ 
&\quad  \quad +\sup_{2/N\leq t \leq 1-2/N}\frac{1}{(t(1-t))^{\bar{\alpha}}}
\left|\sigma^{-1}(Nd)^{1/2}V_{N,d}(t)-B_{N,d}(t)\right|=O_P(N^{\bar{\alpha}-1/2}) + o_P(1).
\end{align}
Using that $\big|g_N(t)(1-2t)- (1-2t)^{1-\beta }\big|\leq  C (N^{\frac{\beta-1}{2}}{\mathbbm 1}_{\{|u|\leq 2N^{-1/2}\}} + u^{-\beta} N^{-1/2}{\mathbbm 1}_{\{|u|> 2N^{-1/2}\}}) \leq C N^{\frac{\beta-1}{2}}$, %
$$
\sup_{2/N\leq t \leq 1-2/N}\frac{1}{(t(1-t))^{\bar{\alpha}}} \left|g_N(t)(1-2t)B_{N,d}(t)-(1-2t)^{1-\beta}B_{N,d}(t)\right| =O_P(N^{\frac{\beta-1}{2}})=o_P(1). $$
Hence for $0<z<1/2$, 
$$
\frac{(Nd)^{1/2}}\sigma\bigg(\sup_{z\leq t \leq 1-z} \frac{|V_{N,d}(t)|}{w(t)},\sup_{z\leq t \leq 1-z} \frac{|Z_{N,d}(t)|}{w(t)} \bigg) \stackrel{\cD}{\to}\bigg(\sup_{z\leq t\leq  1-z}\frac{|B(t)|}{w(t)},\sup_{z\leq t\leq  1-z}\frac{|1-2t|^{1-\beta}|B(t)|}{w(t)}\bigg)
$$
where $B(t)$ is a Brownian bridge. Using again \eqref{pr1-2} we get
\begin{align}
&\sup_{2/N\leq t \leq z}\frac{1}{w(t)}\left|\tfrac{(Nd)^{1/2}}{\sigma}V_{N,d}(t)-B_{N,d}(t)\right|+ \sup_{2/N\leq t \leq z}\frac{1}{w(t)}\left|\tfrac{(Nd)^{1/2}}{\sigma}Z_{N,d}(t)-(1-2t)^{1-\beta}B_{N,d}(t)\right|\notag\\
&\leq \sup_{0<s\leq z}\frac{s^{1/2}}{w(s)}\bigg(\sup_{2/N\leq t \leq z}\frac{1}{t^{1/2}}\left|\tfrac{(Nd)^{1/2}}{\sigma}V_{N,d}(t)-B_{N,d}(t)\right|\notag\\
& \hspace{6.5cm} + \sup_{2/N\leq t \leq z}\frac{1}{t^{1/2}}\left|\tfrac{(Nd)^{1/2}}{\sigma}Z_{N,d}(t)-(1-2t)^{1-\beta}B_{N,d}(t)\right|\bigg)\notag\\
&\leq \sup_{0<s\leq z}\frac{s^{1/2}}{w(s)}\left(\frac{2}{N}\right)^{\bar{\alpha}-\tfrac  12}\bigg(\sup_{2/N\leq t \leq z}\frac{1}{t^{\bar{\alpha}}}\left|\tfrac{(Nd)^{1/2}}{\sigma}V_{N,d}(t)-B_{N,d}(t)\right|\notag\\
& \hspace{6.5cm}+\sup_{2/N\leq t \leq z}\frac{1}{t^{\bar{\alpha}}}\left|\tfrac{(Nd)^{1/2}}{\sigma}Z_{N,d}(t)-(1-2t)^{1-\beta}B_{N,d}(t)\right|\bigg)\notag\\
&=\sup_{0<s\leq z}\frac{s^{1/2}}{w(s)}\Big(O_P(1) + O_P(N^{\frac\beta2-\bar\alpha-\frac{1}{2}})\Big).\label{pr1-3}
\end{align}
It is shown in  \citet[p. 179]{csh-1} that $\lim_{z\to 0}t^{1/2}/w(t)=0.$
Moreover, according to  \citet[p. 179]{csh-1} 
$$
\sup_{0<t\leq z}\frac{|B(t)|}{w(t)}\stackrel{P}{\to}c_1,\quad z\to 0,
$$
for some $0\leq c_1<\infty$. Therefore, by \eqref{pr1-3}
$$
\lim_{z\to 0}\limsup_{N\to \infty}P\left\{  \left|\sup_{2/N\leq t \leq z}\frac{1}{w(t)}\left|\sigma^{-1}(Nd)^{1/2}V_{N,d}(t)\right|-c_1\right|>x  \right\}=0
$$
for all $x>0$. The same arguments give
$
\sup_{1-z\leq t<1}\frac{|B(t)|}{w(t)}\stackrel{P}{\to}c_2,$ as $z\to 0$
for some $c_2\geq 0$, and %
$$
\lim_{z\to 0}\limsup_{N\to \infty}P\left\{  \left|\sup_{1-z\leq t \leq 1-2/N}\frac{1}{w(t)}\left|\sigma^{-1}(Nd)^{1/2}V_{N,d}(t)\right|-c_2\right|>x  \right\}=0
$$
for all $x>0$; similar arguments apply to $Z_{N,d}$.  Since
$$
\sup_{0<t<1} \frac{\max\{ |B(t)|,|1-2t|^{1-\beta}|B(t)|\}}{w(t)} =\sup_{0<t<1}\frac{|B(t)|}{w(t)},
$$
the proof of \eqref{th1-1} is complete. %
\qed

\medskip
\noindent
{\bf Proof of Theorem \ref{th2}.} Following arguments leading to \eqref{pr1-2}, using Lemmas \ref{lem-1} and \ref{lem-2}, with $B_{N,d}(t)$ as in \eqref{e:B_Nd}, we obtain %
\begin{align}\label{da-pr1}
\sup_{2/N\leq t \leq 1-2/N}\frac{1}{(t(1-t))^{1/2}}
&\left|\frac{N^{1/2}}{2\cs(d)}Z_{N,d}(t)- (1-2t)^{1-\beta}B_{N,d}(t)\right| \notag\\ &+  
\sup_{2/N\leq t \leq 1-2/N}\frac{1}{(t(1-t))^{1/2}}
\left|\frac{N^{1/2}}{2\cs(d)}V_{N,d}(t)-B_{N,d}(t)\right|=O_P(1).
\end{align}

For $t$ in the range $2/N<t<1/2$, it follows from Cs\"org\H{o} and Horv\'ath (1993, p.\ 256)
\begin{align*}
&\frac{1}{(2\log \log N)^{1/2}}\max_{2/N<t<1/2}\frac{1}{t^{1/2}}\left| B_{N,d}(t)\right|\\
&=\frac{1}{(2\log \log N)^{1/2}}\max_{2<u\leq N/2}\frac{1}{u^{1/2}}\Big|   W_{N,d,1}(u)-\tfrac{u}{N}\big(  W_{N,d,1}(N/2)  + W_{N,d,2}(N/2)\big)    \Big|\stackrel{P}{\to}1,
\end{align*}
$$
\max_{1/N\leq t \leq (1/N)(\log N)^4}\frac{1}{t^{1/2}}\left|B_{N,d}(t)  \right|
=O_P\left(\left(\log\log\log N\right)^{1/2}\right).
$$
and 
$$
\max_{1/(\log N)^4\leq t\leq 1/2}\frac{1}{t^{1/2}}\left|B_{N,d}(t)  \right|
=O_P\left(\left(\log\log\log N\right)^{1/2}\right).
$$
For convenience write $S_{N,d}(t) =(2\cs(d))^{-1}N^{1/2}\max \{|V_{N,d}(t)|,|Z_{N,d}(t)|\}$. Since $$\max\{ |B_{N,d}(t)|,|1-2t|^{1-\beta}|B_{N,d}(t)|\}= |B_{N,d}(t)|,$$
 by \eqref{da-pr1} 
\begin{equation}\label{e:S_Nd_bound}
\sup_{0<t<1/2} |B_{N,d}(t)|-O_P(1) \leq \sup_{0<t<1/2}S_{N,d}(t)\leq \sup_{0<t<1/2} |B_{N,d}(t)|+O_P(1),
\end{equation}
 and we obtain %
\beq\label{da-pr2}
\frac{\sup_{0<t<1/2}S_{N,d}(t)}{(2\log \log N)^{1/2}}  \stackrel P \to 1,%
\eeq
\beq\label{da-pr2/3}
\max_{1/N\leq t \leq (1/N)(\log N)^4} S_{N,d}(t)=O_P\left(\left(\log\log\log N\right)^{1/2}\right),
\eeq
and
\beq\label{da-pr3}
\max_{1/(\log N)^4\leq t\leq 1/2} S_{N,d}(t)=O_P\left(\left(\log\log\log N\right)^{1/2}\right).
\eeq
Putting together  \eqref{da-pr2}--\eqref{da-pr3} we conclude
\begin{align*}
\lim_{N,d\to \infty}P\Biggl\{\sup_{0<t \leq 1/2}&S_{N,d}(t)=\sup_{N^{-1}(\log N)^4\leq t \leq 1/(\log N)^4}S_{N,d}(t)\Biggl\}=1.
\end{align*}
By \eqref{e:S_Nd_bound} we obtain 
\begin{align*}
\max_{(\log N)^4\leq k \leq N/(\log N)^4}\left|S_{N,d}(k/N)
-\frac{1}{k^{1/2}}W_{N,d,1}(k)\right|=o_P\left(\left(\log\log N\right)^{-1/2}\right).
\end{align*}
The classical Darling--Erd\H{o}s limit result yields (cf.\  Cs\"org\H{o} and Horv\'ath, 1993, p.\ 256)

\begin{align}\label{erdos1}
\lim_{N\to \infty}P\left\{a(\log N)\max_{(\log N)^4\leq k \leq N/(\log N)^4}\frac{1}{k^{1/2}}\left| W_{N,d,1}(k)  \right|\leq x +b(\log N)\right\}
=\exp(-e^{-x})
\end{align}
for all $x$, implying 
\begin{equation}\label{e:S(t)_near0}
P\left\{a(\log N)\sup_{0\leq t \leq 1/2}S_{N,d}(t) \leq x +b(\log N)\right\}
\to \exp(-e^{-x}).
\end{equation}
 Analogously, since
\begin{align*}
\max_{N-N/(\log N)^4\leq k \leq N-(\log N)^4}\left|S_{N,d}(k/N)
-\frac{1}{(N-k)^{1/2}}W_{N,d,2}(N-k)\right|=o_P\left(\left(\log\log N\right)^{-1/2}\right).
\end{align*} %
then \eqref{e:S(t)_near0} holds when the $\sup$ instead taken over $1/2< t \leq 1$, %
 and \eqref{th2-1} follows from %
  the independence of $W_{N,d,1}$ and $W_{N,d,2}$.  %

\qed

\medskip
\noindent
{\bf Proof of Theorems \ref{th3} and \ref{th4}.} In Lemma \ref{appesecmom}, the orders of $\cm(d,\nu)$ and $\cs(d)$ are established under Assumption \ref{as6}.  Combining this with Lemmas \ref{lem-1} and \ref{lem-2},  the proofs of Theorems \ref{th1} and \ref{th2} can be repeated with minor  modifications to establish the result.
\qed

\medskip

\noindent {\bf Proof of Propositions \ref{p:jack1} and \ref{p:jack2}.} 
Define, for $c=0,1,2$,
$$
U_c = \left( {N\choose c}  {N-c \choose 2-c}  {N-2\choose 2-c} \right)^{-1}\sum_c \psi_{i,j} \psi_{k,\ell}
$$
where $\sum_c$ denotes the sum over pairs $(i,j)$, $i\neq j$ and  $(k,\ell)$, $k\neq \ell$ in $\{1,\ldots,N\}$ with $c$ indices in common. %
Using $\frac{1}{N}\sum_{i=1}^N U_{N-1,d}^{(-i)}= U_{N,d}$, we can reexpress (\citet{arvesen:1969}, p. 2081)
\begin{align}
\hat{\sigma}_{N,d}^2 & = (N-1) \sum_{i=1}^N (U_{N-1,d}^{(-i)}- U_{N,d})^2\notag\\
& = \frac{(N-1)}{N}{ {N-1\choose 2}^{-2}} \sum_{c=0}^2(cN-4){N\choose c}{N-c \choose 2-c} {N-2\choose 2-c} U_c\notag\\
& =  O(1)U_0 + (4+o(1))U_1 + O(N^{-1}) U_2. \label{e:jackknife_decomp_Uc}
\end{align}
Applying Theorem 1.3.3 in \cite{lee:1990}, we obtain $\Var (U_c) \leq O(N^{-1} \E \psi_{1,2} ^4)$, $c=0,1,2$, so under the conditions of Theorem \ref{th1} or of Theorem \ref{th3},
\beq\label{e:Uc_estimates}
U_c = \E U_c + O_P(N^{-1/2} (\E \psi_{1,2} ^4)^{1/2}),\quad c=0,1,2.
\eeq   %
Note $\E U_0 = 0$, $\E U_1 = \E \psi_{1,2}\psi_{1,3} = \E\big[\E_{\bf X_1} \psi_{1,2} 
E_{\mathbf X_1}\psi_{1,3}\big] = E \zeta_1^2 = \cs^2(d),$ and $\E U_2 = E \psi_{1,2}^2$.  Thus, under the conditions of Theorem \ref{th1}, Lemma \ref{appe-mom} shows $\E \psi_{1,2} ^4=O(d^{-2})$, and using \eqref{e:jackknife_decomp_Uc}, we get%
$$%
 \hat \sigma^2_{N,d} = O_P(d^{-1}N^{-1/2}) + (4+o(1))\big[\cs^2(d) + O_P(N^{-1/2}d^{-1})\big] + O_P( N^{-1}d^{-1})   %
$$%
which, by \eqref{th2-3}, implies $ d\sigma^2_{N,d}  = 4d \cs^2(d) + o_P(1) \stackrel P \to \sigma^2$. 

On the other hand, under the conditions assumed in Theorem \ref{th3} we have $\E \psi_{1,2} ^4=O(1)$, so by \eqref{e:jackknife_decomp_Uc} and \eqref{e:Uc_estimates},%
\beq\label{ja-2}
\hat{\sigma}_{N,d}^2\stackrel{P}{\to}\gamma^2, \quad\mbox{as}\;\;\min\{N,d\}\to \infty.
\eeq
The proof of {Proposition} \ref{p:jack2} is similar and is omitted.

\qed

\noindent \textbf{Proof of Proposition \ref{th:consistency}}.  Define
\begin{equation}\label{e:def_qV}
q_V(t)  = \begin{cases}
\left(\displaystyle\frac{1-\eta}{1-t}\right)^2\big(\mu_1-\mu_2\big) - 2\left(\displaystyle\frac{1-\eta}{1-t}\right)\left(1-\displaystyle\frac{1-\eta}{1-t}\right)\big(\mu_{12}-\mu_1\big) & t\leq \eta,\vspace{0.5ex}\\
\left(\displaystyle\frac{\eta}{t}\right)^2\big(\mu_1-\mu_2\big) - 2\left(\displaystyle\frac{\eta}{t}\right)\left(1-\displaystyle\frac{\eta}{t}\right)\big(\mu_{12}-\mu_2\big) & t>\eta,
\end{cases}
\end{equation}
and
\begin{equation}\label{e:def_qZ}
q_Z(t) = \bigg[\eta^2 - {\mathbbm 1}_{\{t\leq \eta\}}\left(1-\displaystyle\frac{1-\eta}{1-t}\right)\bigg]\big(\mu_{1,2}-  \mu_1 \big) + \bigg[(1-\eta)^2 - {\mathbbm 1}_{\{t> \eta\}}\left(1-\displaystyle\frac{\eta}{t}\right)\bigg] \big(\mu_{1,2}-\mu_2\big)
\end{equation}
 We first consider $(ii)$. %
For $0\leq t\leq \eta$, with $u=\frac{1-\eta}{1-t} \in[1-\eta,1]$, observe
$$
\frac{|q_Z(t)|}{|\mu_1-\mu_2|} =  \big|u^2 - 2u(1-u)\times o(1) \big|
$$
is eventually increasing  for all $1-\eta\leq u\leq1$,  i.e., for all $0\leq t\leq \eta$, so $\argmax_{0\leq t \leq \eta}|q_Z(t)|=\eta$ for all large $N,d$. Arguing analogously for $\eta \leq t \leq 1$, we therefore obtain $\argmax_{0\leq t \leq 1}|q_Z(t)|=\eta$ for all large $N,d$, i.e., $\eta$ is the unique maximizer of $q_Z(t)$ for all large $N,d$.  Now, applying Lemma \ref{l:onechange_lemma},
\beq\label{e:z-q_z}
\frac{|q_V(t)|}{|\mu_1-\mu_2|} - o_P(1) \leq \frac{|V_{N,d}(t)|}{d^{-1/p}|\mu_1-\mu_2|} \leq \frac{|q_V(t)|}{|\mu_1-\mu_2|} + o_P(1),
\eeq
for all $2/N<t<1-2/N$.
This gives
$$
\frac{|q_V(\eta)|}{|\mu_1-\mu_2|} - o_P(1) \leq  \frac{|V_{N,d}(\eta)|}{d^{-1/p}} \leq  \frac{|V_{N,d}(t_V)|}{d^{-1/p}} \leq  \frac{|q_V(t_V)|}{|\mu_1-\mu_2|} + o_P(1).
$$
Thus, for every small $\delta>0$,
\beq\label{e:tV_to_t0}
\P\Big( |t_V-\eta|>\delta \Big) \leq P\bigg(\frac{|q_V(\eta)|}{|\mu_1-\mu_2|} - o_P(1) \leq  \frac{\sup_{t:|t-\eta|\geq \delta}|q_V(t)|}{|\mu_1-\mu_2|} + o_P(1)\bigg) \to 0.
\eeq
Further, since $M_{N,d}=\max\{|\mu_{1,2}-\mu_1|,|\mu_{1,2}-\mu_2|\}\gg d^{\frac{1}{p}-\frac{1}{2}}N^{-1/2}$ applying Lemma \ref{l:onechange_lemma} again we have $|Z_{N,d,0}(t)|/M_{N,d} = o_P(1) +|q_Z(t)|/M_{N,d}$ for all $2/N<t<1-2/N$.  It is easily seen $\argmax_t|q_Z(t)|\subseteq\{0,\eta,1\}$ for all large $N,d$ (its exact values depending on the magnitude and signs of $\mu_{1,2}-  \mu_1$ and $\mu_{1,2}-  \mu_2$), and arguing in a similar fashion as for \eqref{e:tV_to_t0}, for each $N,d$$$
\P\Big( |t_Z|>\delta,~|t_Z-\eta|>\delta,~|t_Z-1|>\delta\Big) \leq P\bigg(\frac{|q_Z(t_Z^*)|}{M_{N,d}} - o_P(1) \leq  \frac{\sup_{t:\max\{|t-\eta|,t,1-t\}\geq \delta}|q_Z(t)|}{M_{N,d}} + o_P(1)\bigg) \to 0.
$$
Thus, may pick a sequence $t^*_Z\in \argmax_t|q_Z(t)|$ such that
\begin{equation}\label{e:argmax_Z}
|t^*_Z-t_Z|=o_P(1).
\end{equation} 
  Further, clearly, by condition $(ii)$,
$$
\frac{\sup_{0<t<1}|Z_{N,d,0}(t)|}{d^{-1/p}|\mu_1-\mu_2|} \leq \frac{\sup_{0<t<1}|q_{Z}(t)|}{d^{-1/p}|\mu_1-\mu_2|} +\frac{\sup_{0<t<1}|Z_{N,d,0}(t)-q_{Z}(t)|}{d^{-1/p}|\mu_1-\mu_2|} = o_P(1),
$$
whereas by \eqref{e:z-q_z}, and the definition of $q_V$,  $\sup_{0<t<1}|V_{N,d}(t)|/(d^{-1/p}|\mu_1-\mu_2|) \stackrel \P \to  1,$ showing $ |Z_{N,d,0}(t_Z)| = o_P(|V_{N,d,0}(t_V)|).$  In turn, this implies, in the case that $\eta\neq 1/2$,
\begin{equation}\label{e:Z_to_Z_0}
 |Z_{N,d}(t_Z)| =O_P(1) |Z_{N,d,0}(t_Z)|=o_P(|V_{N,d}(t_V)|).
\end{equation}
This gives $\widehat \eta_{N,d}=t_V$ with probability tending to 1, which together with \eqref{e:tV_to_t0} gives $\widehat \eta_{N,d}\stackrel P \to \eta$.  If $\eta=1/2$,  on the event $\{\omega: t_Z^*\in\{0,1\}\}$, \eqref{e:Z_to_Z_0} still holds, again implying $\widehat \eta_{N,d}\stackrel P \to \eta$,  and clearly on the event $\{\omega:t_Z^*=1/2\}$,  we have $t_Z\stackrel P \to 1/2$. Since we also have $t_V\stackrel P\to \eta=1/2$ then $\widehat \eta_{N,d}\stackrel P \to 1/2 $ and the result is proven under condition $(ii)$.  %

	 For condition $(i)$, note $\mu_{1,2}-\min\{\mu_1,\mu_2\}\geq \mu_{1,2}-\max\{\mu_1,\mu_2\}\gg \max\{\mu_1,\mu_2\}-\min\{\mu_1,\mu_2\}=|\mu_1-\mu_2|$. From this it is readily seen under $(i)$ that $\argmax |q_Z(t)|=\eta$ for all large $N,d$, and arguing similarly as in the case of $V_{N,d}$, we have
\begin{equation}\label{e:argmax_Z}
t_Z \stackrel \P\to  \eta.
\end{equation}
Now, observe, for $0 \leq t \leq \eta$,
$$
\frac{|q_V(t)|}{\mu_{1,2} }  = 2\left(\frac{1-\eta}{1-t}\right)\left(1-\frac{1-\eta}{1-t}\right) + o(1) \leq \frac{1}{2} + o(1).  %
$$
Arguing analogously for $\eta \leq t \leq 1$, we have $|q_V(t)| /\mu_{1,2}  \leq \frac{1}{2}+ o(1)$, giving
 $\limsup_{N,d\to\infty} \sup_{0\leq t \leq 1}|q_V(t)|/\mu_{1,2} \leq \frac{1}{2},$ which implies
 $$
 \frac{|V_{N,d}(t_Z)|}{d^{-1/p}\mu_{1,2}} = o_P(1) + \frac{\sup_{0\leq t \leq 1}|q_V(t)|}{\mu_{1,2} } \leq \frac{1}{2} + o_P(1).
 $$
  On the other hand, by \eqref{e:argmax_Z}, %
 $$
\frac{\sup_{0<t\leq1}|Z_{N,d,0}(t)|}{d^{-1/p}\mu_{1,2}} = o_P(1) + \frac{\sup_{0<t\leq1}|q_Z(t)|}{\mu_{1,2}} \stackrel P \to \eta^2 +(1-\eta)^2\geq \frac{1}{2}
 $$
so $$\frac{|Z_{N,d}(t_Z)|}{\mu_{1,2}}\stackrel P \to \frac{\eta^2 +(1-\eta)^2}{|1-2\eta|^{\beta}} > \frac{1}{2} + \delta,$$ for some $\delta>0$.  In other words,
\begin{align*}\P(|Z_{N,d}(t_Z)|>|V_{N,d}(t_V)|) &=  \P\Bigg(\frac{|Z_{N,d}(t_Z)|}{\mu_{1,2}} \geq \frac{|V_{N,d}(t_Z)|}{\mu_{1,2}}\Bigg)\\
& \geq \P\Bigg(\frac{1}{2}+\delta +o_P(1) \geq \frac{1}{2}+o_P(1)\Bigg) \to 1,
\end{align*}
which gives the result under condition $(i)$. Cases $(i'),(ii')$ can be established arguing analogously to $(i),(ii)$ with minor changes.\qed\\

\noindent \textbf{Proof of Theorem \ref{th:power}.} We provide proofs for $(i)$ and $(ii)$ under Assumptions \ref{as4}--\ref{as5}, since cases $(i'),(i'')$ follow from a similar argument.  First we consider $(i)$ in the case that $(Nd)^{1/2}d^{-1/p}|\mu_1-\mu_2|\to \infty$. Observe that
\begin{align*}
d^{-1/p}U_{N,d,1}(k_1)&= d^{-1/p}\mu_1 +   {k\choose 2}^{-1}\sum_{1\leq i<j\leq k}\psi_{i,j}\\
& = d^{-1/p}\mu_1+ 2\frac{1}{k} \sum_{i=1}^k\zeta_i  + {k\choose 2}^{-1}\sum_{1\leq i<j\leq k}y_{ij}\\
& = d^{-1/p}\mu_1+O_P( N^{-1/2} \cs(d)) + O_P\big(N^{-1}\left(E\psi^2_{1,2}+E\zeta_1^2  \right)^{1/2}\big)\\
& = d^{-1/p}\mu_1+O_P((Nd)^{-1/2}),
\end{align*}
where we used Lemma \eqref{appe-mom} on the last line. Analogously, $U_{N,d,2}(k_1)=\mu_2+O_P\left(d^{1/p-1/2}/N^{1/2}\right).$ Thus, for some $c_0,c_1>0$,
\begin{align}T_{N,d}\geq c_0 |V_{N,d}(k_1/N)|&\geq c_1 d^{-1/p}|U_{N,d,2}(k_1)-U_{N,d,2}(k_1)|\notag\\
& \geq c_1 d^{-1/p}|\mu_1-\mu_2| +  O_P\big((Nd)^{-1/2}\big).\label{e:TNd_lowerbound}
\end{align}
Thus, for $\widetilde T_{N,d}$ as in \eqref{e:def_rescaledT_Nd}, we obtain $\widetilde T_{N,d} \geq c_1 (Nd)^{1/2}d^{-1/p} |\mu_1-\mu_2| +  O_P(1) \stackrel P \to \infty$, giving $(i)$ whenever $(Nd)^{1/2}d^{-1/p}|\mu_1-\mu_2|\to \infty$.  If $(Nd)^{1/2}d^{-1/p}|\mu_{1,2}-\mu_2|\to \infty$ but $(Nd)^{1/2}d^{-1/p}|\mu_1-\mu_2| = O(1)$, applying Lemma \ref{l:onechange_lemma}, we obtain, for some $\delta>0$,
\begin{align*}(Nd)^{1/2}T_{N,d}&\geq  C(Nd)^{1/2} |V_{N,d}(\eta-\delta)|\\
&\geq C(Nd)^{1/2}d^{-1/p}   |q_Z(\eta-\delta)| + O_P(1) \\
&\geq C (Nd)^{1/2}d^{-1/p}|\mu_{1,2}-\mu_1| + O_P(1),
\end{align*}
showing $\widetilde T_{N,d}\stackrel P\to \infty$. An analogous argument shows $\widetilde T_{N,d}\stackrel P\to \infty$ when $(Nd)^{1/2}d^{-1/p}|\mu_{1,2}-\mu_2|\to \infty$ but $(Nd)^{1/2}d^{-1/p}|\mu_1-\mu_2| = O(1)$, giving \eqref{e:widetildeT_diverg} in case $(i)$.

For $(ii)$, arguing as in \eqref{e:TNd_lowerbound} we have
\begin{align*}
a(\log N) (Nd)^{1/2}T_{N,d} &\geq C  [\log \log N]^{1/2}N^{1/2}d^{\frac{1}{2}-\frac{1}{p}}|\mu_1-\mu_2| +  O_P\big(\log\log (N)\big)\\
&= C  \log\log N\Big([\log \log N]^{-1/2}N^{1/2}d^{\frac{1}{2}-\frac{1}{p}}|\mu_1-\mu_2| +  O_P(1)\Big),%
\end{align*}
which, since $(Nd)^{1/2} \left(\log\log N\right)^{-1/2}d^{-1/p}|\mu_1-\mu_2| \to \infty$, implies $\widetilde T_{N,d}\stackrel P \to \infty$.  If $(Nd)^{1/2} \left(\log\log N\right)^{-1/2}d^{-1/p}|\mu_1-\mu_2|-2\log \log N  =O(1)$ but $(Nd)^{1/2} \left(\log\log N\right)^{1/2}d^{-1/p}|\mu_{1,2}-\mu_i| \to\infty,$ we may apply Lemma \ref{l:onechange_lemma} as in the case of $(i)$ to again obtain $\widetilde T_{N,d}\stackrel P\to\infty$, giving \eqref{e:widetildeT_diverg} in case $(ii)$.  \qed \medskip

\medskip
\section{Auxiliary lemmas}\label{app-mom}

\begin{lemma}\label{l:mensh}
Expression \eqref{e:maximal_inequ} holds. 
\end{lemma}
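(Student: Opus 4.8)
The plan is to reduce \eqref{e:maximal_inequ} to an iterated application of the one-dimensional Rademacher--Menshov maximal inequality, the same tool used to obtain \eqref{men}. The structural input is that $y_{ij}=\psi_{i,j}-\zeta_i-\zeta_j$ is a symmetric, completely degenerate kernel: by \eqref{psize}, \eqref{e:H(x)} and \eqref{zedef} one has $E[\psi_{1,2}\mid \mathbf X_1]=H(\mathbf X_1)-\theta=\zeta_1$, hence $E[y_{ij}\mid \mathbf X_i]=0$ for all $i\neq j$. Conditioning on the common index (or on nothing) then gives $E[y_{ij}y_{i'j'}]=0$ unless $\{i,j\}=\{i',j'\}$, in which case it equals $Ey_{1,2}^2$. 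In particular $Ey_{1,2}^2=E\psi_{1,2}^2-2E\zeta_1^2\le E\psi_{1,2}^2+E\zeta_1^2$, which is exactly the factor on the right of \eqref{e:maximal_inequ}.

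Next I would recast $S_k:=\sum_{i=1}^k\sum_{j=k+1}^N y_{ij}$ as a rectangular partial sum of a pairwise orthogonal double array. Reversing the second index, set $z_{i,\ell}:=y_{i,\,N+1-\ell}$ when $1\le i\le m_2$, $1\le\ell\le N-m_1$ and $i+\ell\le N$, and $z_{i,\ell}:=0$ otherwise; then $S_k=\sum_{i=1}^{k}\sum_{\ell=1}^{N-k}z_{i,\ell}$ for every $m_1\le k\le m_2$ (for such $k$ the constraint $i+\ell\le N$ is automatic). Since each $z_{i,\ell}$ is supported on the triangle $\{i+\ell\le N\}$, on which distinct pairs $(i,\ell)$ correspond to distinct unordered index sets $\{i,\,N+1-\ell\}$, the first paragraph shows $\{z_{i,\ell}\}$ is pairwise orthogonal with $Ez_{i,\ell}^2\le Ey_{1,2}^2$; moreover the index pairs used as $k$ runs over $[m_1,m_2]$ all lie in $\{1,\ldots,m_2\}\times\{1,\ldots,N-m_1\}$.

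The heart of the argument is then the two-parameter Rademacher--Menshov inequality for pairwise orthogonal arrays, obtained by applying the one-dimensional inequality successively in $\ell$ (to the partial sums of each row) and in $i$: it bounds the expected square of the maximal rectangular partial sum by the product of the squared logarithms of the two side-lengths of the index rectangle times the total second moment $\sum_{i,\ell}Ez_{i,\ell}^2\le m_2(N-m_1)\,Ey_{1,2}^2$. Since $m_2$ and $m_1$ are of the same order in every use of \eqref{e:maximal_inequ} (and one may, if desired, first peel off the fixed block of rows $i\le m_1$ and control it by a single one-dimensional Menshov bound, so that only the rows $i\in\{m_1+1,\ldots,m_2\}$ enter the $k$-dependent part), the logarithmic factors are of the order $\log_2(4m_1)$ and $\log_2(4(N-m_1))$, and combining with the bound on $Ey_{1,2}^2$ gives \eqref{e:maximal_inequ}.

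The step requiring the most care is the two-parameter bookkeeping: the cutoff $k$ sits simultaneously in the inner and the outer summation limits of $S_k$, so the sum must first be reorganized --- via the index reversal above --- into a genuine rectangular partial sum before orthogonality and the iterated maximal inequality apply; and the verification of pairwise orthogonality is precisely where the complete degeneracy $E[y_{ij}\mid \mathbf X_i]=0$, i.e.\ the projection identity behind \eqref{zedef}, is used.
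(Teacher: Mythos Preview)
Your approach is essentially the same as the paper's. Both arguments (i) exploit the complete degeneracy of $y_{ij}$ to obtain pairwise orthogonality, (ii) reverse the second index $j\mapsto N+1-j$ to convert $S_k$ into a rectangular partial sum of an orthogonal array on $\{1,\dots,m_2\}\times\{1,\dots,N-m_1\}$, and (iii) invoke a two-parameter Rademacher--Menshov bound. The only cosmetic difference is that the paper carries out (iii) explicitly via simultaneous dyadic (binary) expansion of $k$ and $N-k$, whereas you cite the iterated one-dimensional inequality as a black box; the underlying mechanism is identical. Your observation that the stated factor $\log_2(4m_1)$ ought naturally to be $\log_2(4m_2)$ is correct---the paper's own proof in fact produces $(r_2+2)^2\asymp(\log_2 4m_2)^2$---and your remark that $m_1\asymp m_2$ in every application of \eqref{e:maximal_inequ} is the right way to reconcile this.
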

\begin{proof}
Note the variables $y_{i,j}=\psi_{i,j}-\zeta_i-\zeta_j$ are uncorrelated for every pair $(i,j)\neq(i',j')$. For each fixed integer $k\in[m_1,m_2]$,  define
$$\widetilde y(i,j) = y_{i,N-j+1}I\{{i\leq m_2,j\leq N-m_1\}}.$$
By considering the binary expansions $k=\sum_{i=0} ^{\nu_1}a_i2^{\nu_1-i}$ and $N-k=\sum_{j=0}^{\nu_2} b_j2^{\nu_2-j}$   ($a_i,b_j\in\{0,1\}$), we may rewrite
$$
\sum_{i=1}^k\sum_{j=k+1}^N y_{ij}=\sum_{i=1}^{k}\sum_{j=1}^{N-k}y_{i,N-j+1}= \sum_{i=0}^{\nu_1} \sum_{j=0}^{\nu_2} \eta_{ij}
$$
where $ \eta_{ij}=0$ when $a_ib_j=0$, and for $a_i=b_j=1$,
$$
\eta_{i,j} = \sum_{s=1}^{2^{(\nu_1-i)}} \sum_{t=1}^{2^{(\nu_2-j)}} \widetilde y(A_i+s,B_j+t)
$$
with $A_i= \sum_{\ell=0}^{i-1} a_\ell 2^{\nu_1-\ell}$ and $B_j = \sum_{\ell=0}^{j-1} b_\ell 2^{\nu_2-\ell}$. %
Thus, by Cauchy-Schwarz,
$$
\bigg(\sum_{i=1}^k\sum_{j=k+1}^N y_{ij} \bigg)^2\leq (\nu_1+1)(\nu_2+1)  \sum_{i=0}^{\nu_1} \sum_{j=0}^{\nu_2} \eta_{ij}^2
$$
We now provide an a.s. uniform bound for $\eta_{i,j}^2$.  Let $r_1,r_2$ be the unique integers such that
$$
 2^{r_1}<N-m_1\leq 2^{r_1+1}=L,\qquad 2^{r_2}< m_2\leq 2^{r_2+1}=M.
$$
And define
$$
S_{i,j} = \sum_{\ell=0}^{M/2^i-1} \sum_{\ell'=0}^{L/{2^j}-1}\bigg(\sum_{s=1}^{2^i} \sum_{t=1}^{2^j} \widetilde y(s+\ell 2^i,t+\ell' 2^j)  \bigg)^2
$$
Since $\widetilde y(\ell,\ell')=0$ for $\ell>m_2$ and $\ell'>N-m_2$, we have, by orthogonality of $\widetilde y(\ell,\ell')$,
\begin{align*}
\E S_{i,j} &=\sum_{\ell=0}^{M/2^i-1} \sum_{\ell'=0}^{L/{2^j}-1} \sum_{s=1}^{2^i} \sum_{t=1}^{2^j} E\widetilde y(s+\ell 2^i,t+\ell' 2^j)^2\\
&=  \sum_{\ell=1}^{m_2}\sum_{\ell=1}^{N-m_1}\E \widetilde y(\ell,\ell')^2 = m_2(N-m_1) \E y_{1,2}^2.
\end{align*}
So that $\E  \sum_{\ell=0}^{r_1+2}\sum_{\ell'=0}^{r_2+2}S_{\ell,\ell'} \leq (r_1+1)(r_2+1) m_2(N-m_1) \E y_{1,2}^2$. Notice for each pair $(i,j)$ with $1\leq i \leq m_2$ and $1\leq j \leq N-m_1$, $\eta^2_{i,j}$ appears as a summand in $S_{\nu_1-i,\nu_2-j}$, implying $\eta_{i,j}^2 \leq \sum_{\ell=0}^{r_1+1}\sum_{\ell'=0}^{r_2+1}S_{\ell,\ell'}$ for every such $i,j$. Putting this together with the above, we obtain
$$
E\max_{m_1\leq k \leq m_2}\left(\sum_{i=1}^k\sum_{j=k+1}^N y_{ij}\right)^2 \leq (r_1+2)^2(r_2+2)^2m_2(N-m_1) \E y_{1,2}^2,
$$
which gives the desired bound.
\end{proof}

\begin{lemma}\label{l:onechange_lemma}
Suppose the hypotheses of Theorem \ref{th:power} hold. Let $\eta=k_1/N.$ Then,
$$
\sup_{2/N<t<1-2/N}\ \big| V_{N,d}(t) - d^{-1/p}q_V(t)\big|= O_P((N\cs(d)^2)^{-1/2})
$$
and
$$
\sup_{2/N<t<1-2/N} \big| Z_{N,d,0}(t) - d^{-1/p}q_Z(t)\big| = O_P((N\cs(d)^2)^{-1/2}),
$$
where $q_V(t)$ and $q_Z(t)$ are defined as in \eqref{e:def_qV} and \eqref{e:def_qZ}.
\end{lemma}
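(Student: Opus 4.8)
The plan is to split each of the four $U$-statistics in \eqref{e:Ustats}, evaluated at $k=\lfloor Nt\rfloor$, into its expectation and a centered part, and to show that after the normalizations entering $V_{N,d}$ and $Z_{N,d,0}$ the expectation reproduces the deterministic profiles $d^{-1/p}q_V(t)$ and $d^{-1/p}q_Z(t)$, while the centered part is $O_P((N\cs^2(d))^{-1/2})$ uniformly in $t\in(2/N,1-2/N)$. The first step is purely combinatorial: under $H_A$ each double sum is broken according to whether its indices fall in the first block $\{1,\dots,k_1\}$ or the second block $\{k_1+1,\dots,N\}$, with $E\|\bX_i-\bX_j\|_p\in\{\mu_1,\mu_2,\mu_{1,2}\}$ accordingly; the three classes of pairs are counted, and one uses $\binom{m}{2}=\tfrac{1}{2}m^2+O(m)$ and $\lfloor Nt\rfloor=Nt+O(1)$. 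Since $q_V$ and $q_Z$ are bounded and Lipschitz on $(0,1)$, this yields $E V_{N,d}(t)=d^{-1/p}q_V(t)+O(N^{-1})$ and $E Z_{N,d,0}(t)=d^{-1/p}q_Z(t)+O(N^{-1})$ uniformly in $t$ --- the algebraic collapse to \eqref{e:def_qV}--\eqref{e:def_qZ} is exactly the identity already verified (for $q_Z$) in the proof of Proposition \ref{th:consistency}. As $d^{-1/p}\mu_1,d^{-1/p}\mu_2,d^{-1/p}\mu_{1,2}=O(1)$ and $N^{-1}\ll(N\cs^2(d))^{-1/2}$, these $O(N^{-1})$ terms are absorbed into the claimed rate.

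For the centered parts I would re-run the arguments of Lemmas \ref{lem-1} and \ref{lem-2} block-wise. Inside each block the $\bX_i$ are i.i.d., so $d^{-1/p}\|\bX_i-\bX_j\|_p-d^{-1/p}E\|\bX_i-\bX_j\|_p$ admits a Haj\'ek decomposition $\zeta_i+\zeta_j+\bar y_{i,j}$ (in the notation of Lemma \ref{lem-1}, with block-dependent projections; a cross-projection is used for mixed pairs), where $\bar y_{i,j}$ is degenerate and orthogonal across distinct index pairs and all attendant second moments are of order $\cs^2(d)$. The leading terms become linear partial-sum processes of the i.i.d.\ variables $\zeta_i$, controlled uniformly in $k$ by the Menshov-type maximal inequalities used in the proofs of Lemmas \ref{lem-1} and \ref{l:mensh} and by the strong invariance principle of Lemma \ref{lem-2}. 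The crucial point is that, after multiplication by $t(1-t)$ together with the factors $[k(N-k)]^{-1}$ or $N^{-2}$, each such term equals $N^{-1}$ times a sum of at most $N$ centered i.i.d.\ summands --- e.g.\ $t(1-t)\,\tfrac{2}{k}\sum_{i\le k}\zeta_i=\tfrac{2(1-t)}{N}\sum_{i\le k}\zeta_i$ --- which is $O_P(\cs(d)N^{-1/2})=O_P((N\cs^2(d))^{-1/2})$ uniformly in $t$ because $\cs(d)=O(1)$; it is the weight $t(1-t)$ that neutralizes the $k^{-1/2}$ and $(N-k)^{-1/2}$ blow-ups near $t=0$ and $t=1$. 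The degenerate remainders are bounded via \eqref{lem-11}--\eqref{lem-13}, which carry over block-wise using only orthogonality of the $\bar y_{i,j}$, and after the same normalizations contribute $O_P(N^{\bar\alpha-1}\cs(d))$ with $\bar\alpha\in(1/4,1/2)$, which is negligible. Summing the $O(1)$ many pieces and combining with the first step gives the assertion for both $V_{N,d}$ and $Z_{N,d,0}$.

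I expect the main obstacle to be the bookkeeping rather than any probabilistic novelty. Because the sample is not identically distributed under $H_A$, every $U$-statistic must be split into up to three sub-sums carrying the distinct means $\mu_1,\mu_2,\mu_{1,2}$ and distinct projection functions; the cases $k\le k_1$ (i.e.\ $t\le\eta$) and $k>k_1$ must be handled separately; and some care is needed in the transition band where $\lfloor Nt\rfloor$ straddles $k_1$ --- though this band contributes only $O(N^{-1})$ to the mean and nothing harmful to the fluctuations. A second, minor point is that the bilinear maximal bound \eqref{e:maximal_inequ}, needed for $U_{N,d,3}$ and $U_{N,d,4}$, must be re-derived for two blocks, which is immediate because the orthogonality of the summands exploited in Lemma \ref{l:mensh} survives the block decomposition. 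Finally, one needs the moment conditions of Assumption \ref{as4} (resp.\ \ref{as6}) to hold for the second-block distribution with comparable constants, so that all block-wise variances are of order $\cs^2(d)$; this is part of the $H_A$ set-up, and with it the remaining estimates are essentially verbatim those of Lemmas \ref{lem-1}--\ref{lem-2}.
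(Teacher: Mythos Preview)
Your proposal is correct and follows essentially the same route as the paper's proof: compute the expectation of $V_{N,d}(t)$ (resp.\ $Z_{N,d,0}(t)$) to recover $d^{-1/p}q_V(t)$ (resp.\ $d^{-1/p}q_Z(t)$), then control the centered process uniformly in $t$ by replaying the projection and maximal-inequality arguments from Lemmas~\ref{lem-1}--\ref{lem-2} and the proof of Theorem~\ref{th1}. The paper's own proof is only a few lines and simply points back to those arguments, so you have in fact written out more of the bookkeeping than the paper does --- in particular, your explicit mention of the $O(N^{-1})$ discretization error in the mean, the block-wise Haj\'ek decomposition under $H_A$, and the role of the factor $t(1-t)$ in killing the endpoint blow-ups are details the paper leaves implicit.
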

\begin{proof}
We first turn to $V_{N,d}$.  A straightforward computation shows $\E[V_{N,d}(t)] = d^{-1/p}q_{V}(t)$, and we can reexpress
\begin{align*}
(Nd)^{1/2}&\big(V_{N,d}(t) - d^{-1/p}q_{V}(t)\big) \\
&= N^{1/2}\frac{k}{N}\frac{k^*}{N}  2 d^{1/2} \bigg(\frac{1}{k(k-1)} \sum_{1\leq i<j\leq k}\psi_{i,j} -\frac{1}{k^* (k^*-1)} \sum_{k+1\leq i<j\leq N}\psi_{i,j}\bigg)%
\end{align*}
Hence, using arguments along the lines of those in Theorem \eqref{th1}, we obtain $$(N\cs^2(d))^{1/2}\sup_{2/N \leq t \leq 1-2/N}|V_{N,d}(t) - d^{-1/p}q_{V}(t)\big|=O_P(1).$$ A similar reasoning applies to $Z_{N,d}$, establishing the claim.

\end{proof}

The proofs in Sections \ref{sec-pr1} and \ref{sec-pr2} use the following lemma when the coordinates satisfy the weak dependence assumption.%
\begin{lemma}\label{appe-mom} If Assumptions \ref{as4}--\ref{as5} hold, then, %
\beq\label{ori}
E\psi_{1,2}^4\leq \frac{c}{d^2},
\eeq
\beq\label{ori2}
{\cs^2(d)=}E\zeta_1^2=\frac{\sigma^2}{4d}(1+o(1)) \quad{ d\to \infty.}
\eeq
Also, %
\beq\label{ori3}
E|\zeta_1|^4\leq \frac{c}{d^{4}},
\eeq
with $\sigma^2$ in \eqref{sidef},  $\psi_{1,2}$ and $\zeta_1$ are defined in \eqref{psize}. %
\end{lemma}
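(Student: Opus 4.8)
The plan is to linearize the projection $H(\bX_1)$ (and the kernel $S_{1,2}^{1/p}$) around the limiting constant $\ca(p)$ and to bound the nonlinear remainders using the moment estimates of Assumption \ref{as4}. Throughout write $S_{i,j}=\tfrac1d\sum_{\ell=1}^d|X_{i,\ell}-X_{j,\ell}|^p$ and $G_i=\tfrac1d\sum_{\ell=1}^d g_\ell(X_{i,\ell})$, so that $\psi_{i,j}=S_{i,j}^{1/p}-\theta$, $H(\bX_i)=E[S_{i,j}^{1/p}\mid\bX_i]$, $E[S_{i,j}\mid\bX_i]=G_i$, and $\theta=E S_{1,2}^{1/p}$. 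Put $\bar m_d=E S_{1,2}=\tfrac1d\sum_{\ell=1}^d Eg_\ell(X_{1,\ell})$; by \eqref{as4-3}, $\bar m_d\to\ca(p)$, and $\ca(p)>0$ (otherwise $\sigma^2$ in \eqref{sidef} would not be well defined and positive). The facts that drive everything are: $S_{1,2}-\bar m_d=\tfrac1d\sum_\ell(|X_{1,\ell}-X_{2,\ell}|^p-E|X_{1,\ell}-X_{2,\ell}|^p)$ and $G_1-\bar m_d=\tfrac1d\sum_\ell(g_\ell(X_{1,\ell})-Eg_\ell(X_{1,\ell}))$ have $\alpha$-th moments bounded by $Cd^{-\alpha/2}$ by \eqref{as4-1} and \eqref{as4-2} (so, by Lyapunov, $E|S_{1,2}-\bar m_d|^r$, $E|G_1-\bar m_d|^r$ and $E|S_{1,2}-G_1|^r$ are $O(d^{-r/2})$ for $2\le r\le\alpha$, and by Markov $P(S_{1,2}\le\ca(p)/2)+P(G_1\le\ca(p)/2)\le Cd^{-\alpha/2}$), while $\tfrac1d E\big(\sum_\ell(g_\ell(X_{1,\ell})-Eg_\ell(X_{1,\ell}))\big)^2=\tau^2+o(1)$ by \eqref{as4-3}. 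When $p=1$ the map $t\mapsto t^{1/p}$ is the identity, so $H(\bx)=\tfrac1d\sum_\ell g_\ell(x_\ell)$ exactly, $\zeta_1=G_1-\bar m_d$, $\psi_{1,2}=S_{1,2}-\bar m_d$, and all three assertions follow at once: $E\psi_{1,2}^4=d^{-4}E\big|\sum_\ell(\cdots)\big|^4\le d^{-4}(Cd^{\alpha/2})^{4/\alpha}=O(d^{-2})$, likewise $E\zeta_1^4=O(d^{-2})$, and $E\zeta_1^2=\tfrac1{d^2}E\big(\sum_\ell(\cdots)\big)^2=\tfrac1d(\tau^2+o(1))=\tfrac{\sigma^2}{4d}(1+o(1))$, using $\sigma^2=4\tau^2$ when $p=1$.

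For $p>1$ I would fix the truncation level $\ca(p)/2$ and work on the events $\{S_{1,2}>\ca(p)/2\}$ and $\{G_1>\ca(p)/2\}$, on which $f(t):=t^{1/p}$ has first and second derivatives bounded by a constant depending only on $p$ and $\ca(p)$. A second-order Taylor expansion of $f$ about $\bar m_d$ gives $G_1^{1/p}=\bar m_d^{1/p}+\tfrac1p\bar m_d^{1/p-1}(G_1-\bar m_d)+O((G_1-\bar m_d)^2)$ on $\{G_1>\ca(p)/2\}$, and a conditional Taylor expansion of $S_{1,2}^{1/p}$ about $G_1$ gives $H(\bX_1)=G_1^{1/p}+\tfrac12 E[f''(\xi)(S_{1,2}-G_1)^2\mid\bX_1]$ with $\xi$ between $S_{1,2}$ and $G_1$. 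Collecting the linear term, putting the quadratic Taylor remainder, the conditional Jensen gap, and the contributions of the complementary events into one remainder, and centering, I obtain
\begin{equation*}
\zeta_1=\tfrac1p\bar m_d^{1/p-1}(G_1-\bar m_d)+(\rho_1-E\rho_1),\qquad
\psi_{1,2}=\tfrac1p\bar m_d^{1/p-1}(S_{1,2}-\bar m_d)+\rho_{1,2},
\end{equation*}
the second identity obtained the same way and using $\bar m_d^{1/p}-\theta=O(d^{-1})$, itself a consequence of the same expansion.

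The estimates then separate. The linear term of $\zeta_1$ contributes $(\tfrac1p\bar m_d^{1/p-1})^2 E(G_1-\bar m_d)^2=(\tfrac1p\bar m_d^{1/p-1})^2\tfrac1d(\tau^2+o(1))=\tfrac{\sigma^2}{4d}(1+o(1))$ by \eqref{as4-3} and $\bar m_d\to\ca(p)$; this is the leading order in \eqref{ori2}. For the remainder $\rho_1$, the quadratic Taylor term and the Jensen gap are bounded in $L^2$ by $C\big(E(G_1-\bar m_d)^4+E(S_{1,2}-G_1)^4\big)=O(d^{-2})$ (Lyapunov applied to \eqref{as4-1} and \eqref{as4-2}, via $S_{1,2}-G_1=(S_{1,2}-\bar m_d)-(G_1-\bar m_d)$), while on $\{G_1\le\ca(p)/2\}$ the variable $\zeta_1$ is bounded, and on $\{G_1>3\bar m_d/2\}$ one bounds $H(\bX_1)\le G_1^{1/p}\le C(G_1-\bar m_d)^{1/p}$ and uses the full $\alpha$-th moment of $G_1-\bar m_d$ to absorb the indicator, so the bad-event contribution to $E\zeta_1^2$ is $O(d^{-\alpha/2})=o(d^{-1})$ since $\alpha>4$. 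Hence $E(\rho_1-E\rho_1)^2=o(d^{-1})$, the cross term is $o(d^{-1})$ by Cauchy--Schwarz, and \eqref{ori2} follows. The identical bookkeeping applied to $E\psi_{1,2}^4$ — linear part $O(d^{-2})$ by Lyapunov on \eqref{as4-1}, good-event remainder $O(d^{-2})$, bad-event part $O(d^{-\alpha/2})=O(d^{-2})$ — gives \eqref{ori}, and applied to $E|\zeta_1|^4$ it gives \eqref{ori3}.

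The main obstacle is the nonlinearity of $t\mapsto t^{1/p}$ near the origin when $p>1$: one must show that the atypical events on which $S_{1,2}$ or $G_1$ is far from $\ca(p)$ contribute only $o(d^{-1})$ to $E\zeta_1^2$ and $O(d^{-2})$ to the fourth moments, rather than the merely $O(d^{-2/p})$ that a crude global bound such as $|a^{1/p}-b^{1/p}|\le|a-b|^{1/p}$ would yield. The useful device there is that on such events one replaces $S_{1,2}^{1/p}$ (resp. $H(\bX_1)$) by a multiple of $(S_{1,2}-\bar m_d)^{1/p}$ or $(S_{1,2}-\bar m_d)$ and then applies Markov's inequality at the full exponent $\alpha$ permitted by Assumption \ref{as4}; once this is in place, everything else is a routine application of Lyapunov's and Cauchy--Schwarz inequalities to \eqref{as4-1}--\eqref{as4-3}.
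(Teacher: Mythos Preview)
Your approach is correct and is essentially the same as the paper's: Taylor-linearize $t\mapsto t^{1/p}$ around the deterministic level $\bar m_d\to\ca(p)$, split on the good event where $S_{1,2}$ (resp.\ $G_1$) stays above $\ca(p)/2$ so that $f'$ and $f''$ are bounded, and on the bad event invoke Markov's inequality at the full exponent $\alpha$ from Assumption~\ref{as4} to get $P(\text{bad})=O(d^{-\alpha/2})$. The paper organizes the computation for $\zeta_1$ via the three-term telescoping decomposition $H(\bX_1)-G_1^{1/p}$, $G_1^{1/p}-\bar m_d^{1/p}$, $\bar m_d^{1/p}-\theta$ (their display \eqref{e:zeta_3term_expansion}) and treats each piece with its own Taylor expansion and good/bad split, whereas you compress this into a single linear-plus-remainder representation; the content is the same. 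Two small remarks: your observation that $\zeta_1$ (and $\psi_{1,2}$) is \emph{bounded} on the lower bad event is a cleaner way to handle that piece than the paper's H\"older-with-$r=\alpha/4$ argument; and the upper truncation $\{G_1>3\bar m_d/2\}$ you mention is not actually needed, since $f''(t)$ is already bounded on $[\ca(p)/2,\infty)$ for $p>1$. Finally, note that the exponent in \eqref{ori3} is a typo in the paper: the proof there (``\eqref{ori3} follows from \eqref{ori}'') only yields $E|\zeta_1|^4\le c/d^{2}$ via conditional Jensen, which is exactly what you obtain and what is used elsewhere.
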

\begin{proof} We first establish \eqref{ori}. Let $\bZ=\bX_1-\bX_2=(Z_1, Z_2, \ldots, Z_d)^\T$, so 
\begin{align*}
\psi_{1,2}&=  \Bigg[\bigg( \frac{1}{d}\sum_{j=1}^d |Z_j|^p  \bigg)^{1/p}-\bigg( \frac{1}{d}\sum_{j=1}^d E|Z_j|^p  \bigg)^{1/p}\Bigg]+ \Bigg[\bigg( \frac{1}{d}\sum_{j=1}^d E|Z_j|^p  \bigg)^{1/p}-\E \bigg( \frac{1}{d}\sum_{j=1}^d |Z_j|^p  \bigg)^{1/p}\bigg]\\
& = T_1+T_2.
\end{align*}
 By  the mean value theorem,
\begin{align}\label{meanv}
T_1=\bigg(\frac{1}{d}\sum_{j=1}^d |Z_j|^p  \bigg)^{1/p}-\bigg( \frac{1}{d}\sum_{j=1}^d E|Z_j|^p  \bigg)^{1/p}
=\frac{1}{p}\xi^{-1+1/p}\frac{1}{d}\sum_{j=1}^d[|Z_j|^p- E|Z_j|^p ],
\end{align}
where $\xi$ lies between $(1/d)\sum_{j=1}^d |Z_j|^p$ and $(1/d)\sum_{j=1}^d  E |Z_j|^p$, i.e.,
\beq\label{ta1}
\bigg| \xi- \frac{1}{d}\sum_{j=1}^d E|Z_j|^p   \bigg|\leq \bigg|\frac{1}{d}\sum_{j=1}^d[|Z_j|^p- E|Z_j|^p ]\bigg|.
\eeq
By Markov's inequality we have for all $\alpha>0$ as in Assumption \ref{as4},
\begin{align*}
P\Bigg\{\bigg| \xi- \frac{1}{d}\sum_{j=1}^d E|Z_j|^p   \bigg|>\frac{1}{2d}\sum_{j=1}^dE|Z_j|^p\Bigg\}%
&\leq P\Bigg\{\bigg|\frac{1}{d}\sum_{j=1}^d[|Z_j|^p-E|Z_j|^p]\bigg|>\frac{1}{2d}\sum_{j=1}^dE|Z_j|^p\Bigg\}\\
&\leq \bigg(\frac{1}{2d}\sum_{j=1}^dE|Z_j|^p\bigg)^{-\alpha}E\Bigg|\frac1d\sum_{j=1}^d[|Z_j|^p-E|Z_j|^p]\Bigg|^\alpha\\
&\leq c_1d^{-\alpha/2}
\end{align*}
by Assumptions \ref{as4} and \ref{as5} with some $c_1$. Since $\frac{1}{d}\sum_{j=1}^dE|Z_j|^p \to \ca(p)>0$ there are  constants $c_2>0$ and $c_3>0$ such that
 \beq\label{Adef}
P\{A\}\leq c_2d^{-\alpha/2},
\quad
\mbox{where}
\quad
A=\left\{\omega:\;  \xi\leq c_3   \right\}.
\eeq
Since $p\geq 1$ we get (cf.\ Hardy et al.,\ 1934, p.\ 44)
$$
\bigg(\sum_{j=1}^d|Z_j|^p\bigg)^{1/p}\leq \sum_{j=1}^d|Z_j|\quad\;\mbox{and}\quad\;\bigg(\sum_{j=1}^dE|Z_j|^p\bigg)^{1/p}\leq \sum_{j=1}^d(E|Z_j|^p)^{1/p},
$$
and therefore H\"older's inequality with $r= \alpha/{4}$, $r' =r/(r-1)=\alpha/(\alpha-4)$ yields
\begin{align}
E\bigg(T_1 I\{A\}\bigg)^4 &\leq\frac{1}{d^{4/p}} E\Bigg[\bigg(\sum_{j=1}^d|Z_j|+\sum_{j=1}^d(E|Z_j|^p)^{1/p}\bigg)^4I\{A\}\Bigg]\notag\\
&\leq \frac{1}{d^{4/p}}\Bigg\{E\bigg(\sum_{j=1}^d|Z_j|+\sum_{j=1}^d(E|Z_j|^p)^{1/p}\bigg)^{4r}\Bigg\}^{1/r}(P\{A\})^{1/r'}\notag\\
&\leq c_4 d^{4-(4/p)-(\alpha-4)/2}\notag\\
& = o(d^{-1}),\label{e:o(d^-1)_1}
\end{align}
since $6-(4/p)-\alpha/2<-1$ by Assumption \eqref{as2}.
By \eqref{meanv} we obtain
\begin{align}
E\Bigg[\Bigg(\bigg( \frac{1}{d}\sum_{j=1}^d |Z_j|^p  \bigg)^{1/p}-\bigg( \frac{1}{d}\sum_{j=1}^d E|Z_j|^p  \bigg)^{1/p}\Bigg)^2I\{\bar{A}\}\Bigg]
&\leq c_5 E\left( \frac{1}{d}\sum_{j=1}^d[|Z_j|^p- E|Z_j|^p ]   \right)^4\notag\\
&\leq \frac{c_6}{d^2}.\label{e:leq_cd^-1_1}
\end{align}
Thus, $$\E\Bigg[\bigg( \frac{1}{d}\sum_{j=1}^d |Z_j|^p  \bigg)^{1/p} - \bigg( \frac{1}{d}\sum_{j=1}^d E|Z_j|^p  \bigg)^{1/p}\Bigg]^2 \leq c d^{-1}.$$
  Next we show
\begin{align}
(T_2)^2=\Bigg( E \bigg(\frac{1}{d}\sum_{j=1}^d|Z_j|^p   \bigg)^{1/p}  - \bigg(\frac{1}{d}\sum_{j=1}^dE|Z_j|^p\bigg)^{1/p}\Bigg)^2
\leq c_7 \left(  d^{-1-\alpha/2} +d^{2-(2/p)-(\alpha/2)}\right)\label{e:E(z^p)-(Ez)^p}
\end{align}
which together with \eqref{e:o(d^-1)_1} and \eqref{e:leq_cd^-1_1} will imply \eqref{ori} since $\max\{-1-\alpha/2,~\! 2- 2/p-\alpha/2\}<-1$ giving $(T_2)^4=O(d^{-2}).$ So, a Taylor expansion gives
\begin{align}\label{meanv2}
&\hspace{-1cm}\bigg( \frac{1}{d}\sum_{j=1}^d |Z_j|^p  \bigg)^{1/p}-\bigg( \frac{1}{d}\sum_{j=1}^d E|Z_j|^p  \bigg)^{1/p}\\
&=\frac{1}{p}\bigg(  \frac{1}{d}\sum_{j=1}^d E|Z_j|^p   \bigg)^{-1+1/p}\frac{1}{d}\sum_{j=1}^d[|Z_j|^p- E|Z_j|^p ]\notag\\
&\hspace{1cm}+\frac{1}{p}\bigg(-1+\frac{1}{p}\bigg)\xi^{-2+1/p}\bigg(\frac{1}{d}\sum_{j=1}^d[|Z_j|^p- E|Z_j|^p ]\bigg)^2,\notag
\end{align}
where $\xi$ satisfies \eqref{ta1}. Similarly to our previous arguments, with $A$ as in \eqref{Adef}, we have
\begin{align*}
E\bigg(\xi^{-2+1/p}\bigg(\frac{1}{d}\sum_{j=1}^d[|Z_j|^p- E|Z_j|^p]\bigg)^4 I\{\bar{A}\}\bigg)\leq \frac{c_8}{d^4}E\bigg(\sum_{j=1}^d[|Z_j|^p- E|Z_j|^p]\bigg)^4
\leq \frac{c_{9}}{d^2}.
\end{align*}
By the Cauchy--Schwarz inequality and \eqref{Adef} we conclude
\begin{align*}
\bigg|E\frac{1}{d}\sum_{j=1}^d[|Z_j|^p- E|Z_j|^p ]I\{\bar{A}\}\bigg|
&\leq \bigg|E\frac{1}{d}\sum_{j=1}^d[|Z_j|^p- E|Z_j|^p ]I\{{A}\}\bigg|\\
&\leq \Bigg[E\bigg(\frac{1}{d}\sum_{j=1}^d[|Z_j|^p- E|Z_j|^p ]\bigg)^2\Bigg]^{1/2}(P\{{A}\})^{1/2}\\
&\leq c_{10}d^{-{1/2}-\alpha/4}.
\end{align*}
Using again \eqref{Adef} we obtain via the Cauchy--Schwarz inequality

\begin{align*}
&\hspace{-.5cm}\Bigg|E\Bigg( \bigg[\bigg(   \frac{1}{d}\sum_{j=1}^d |Z_j|^p  \bigg)^{1/p}-\bigg( \frac{1}{d}\sum_{j=1}^d E|Z_j|^p  \bigg)^{1/p} \bigg] I\{A\}\Bigg)\Bigg|\\
&\leq \frac{1}{d^{1/p}}\Bigg[E\bigg(\sum_{j=1}^d|Z_j|^p+\sum_{j=1}^d\left(E|Z_j|^p\right)^{1/p}\bigg)^2\Bigg]^{1/2}\left(P\{A\}\right)^{1/2}\\
&\leq c_{11}d^{1-(1/p)-(\alpha/4)}.
\end{align*}
This establishes \eqref{e:E(z^p)-(Ez)^p}, and thus \eqref{ori} is proven.\\

We now turn to \eqref{ori2}. Let $E_{\bX_1}$ denote the conditional expected value, conditioning with respect to $\bX_1$.
We write
\begin{align}
\zeta_1&=E_{\bX_1}\bigg( \frac{1}{d}\sum_{j=1}^d|X_{1,j}-X_{2,j}|^p  \bigg)^{1/p}-E\bigg( \frac{1}{d}\sum_{j=1}^d|X_{1,j}-X_{2,j}|^p  \bigg)^{1/p}\notag\\
&=E_{\bX_1}\bigg( \frac{1}{d}\sum_{j=1}^d|X_{1,j}-X_{2,j}|^p  \bigg)^{1/p}-\bigg( \frac{1}{d}\sum_{j=1}^dE_{\bX_1}|X_{1,j}-X_{2,j}|^p \bigg)^{1/p}
\label{e:zeta_3term_expansion}\\
&\hspace{1cm}+\bigg( \frac{1}{d}\sum_{j=1}^dE_{\bX_1}|X_{1,j}-X_{2,j}|^p  \bigg)^{1/p}
-\bigg( \frac{1}{d}\sum_{j=1}^dE|X_{1,j}-X_{2,j}|^p  \bigg)^{1/p}\notag\\
&\hspace{1cm}+\bigg( \frac{1}{d}\sum_{j=1}^dE|X_{1,j}-X_{2,j}|^p  \bigg)^{1/p}-E\bigg( \frac{1}{d}\sum_{j=1}^d|X_{1,j}-X_{2,j}|^p  \bigg)^{1/p}.\notag
\end{align}
Recall $g_j$ as defined in \eqref{def:gj}, and note $g_j(X_{1,j})=E_{\bX_1}|X_{1,j}-X_{2,j}|^p$. For the first term in \eqref{e:zeta_3term_expansion}, using a two term Taylor expansion, we get
\begin{align}\label{tay0}
&\hspace{-.5cm}E_{\bX_1}\bigg(\frac{1}{d}\sum_{j=1}^d|X_{1,j}-X_{2,j}|^p\bigg)^{1/p}-\bigg(\frac{1}{d}\sum_{j=1}^dg_j(X_{1,j})    \bigg)^{1/p}\\
&=\frac{1}{p}\bigg(\frac{1}{d}\sum_{j=1}^dg_j(X_{1,j})    \bigg)^{-1+1/p}\frac{1}{d}\sum_{j=1}^d[E_{\bX_1}|X_{1,j}-X_{2,j}|^p-g_j(X_{1,j})]\notag\\
&\hspace{1cm}+\frac{1}{p}\left(-1+\frac{1}{p}\right)E_{\bX_1}\bigg[\xi^{-2+1/p}\bigg(\frac{1}{d}\sum_{j=1}^d[|X_{1,j}-X_{2,j}|^p-g_j(X_{1,j})]\bigg)^2\bigg]
\notag\\
&=\frac{1}{p}\left(-1+\frac{1}{p}\right)E_{\bX_1}\bigg[\xi^{-2+1/p}\bigg(\frac{1}{d}\sum_{j=1}^d[|X_{1,j}-X_{2,j}|^p-g_j(X_{1,j})]\bigg)^2\bigg],\notag
\end{align}
where the random variable  $\xi$ now satisfies
\begin{align}\label{tay1}
\bigg|\xi-\frac{1}{d}\sum_{j=1}^dg_j(X_{1,j})\bigg|\leq
\bigg|\frac{1}{d}\sum_{j=1}^d[|X_{1,j}-X_{2,j}|^p-g_j(X_{1,j})]\bigg|.
\end{align}
We define again, as in \eqref{Adef},
$$
A=\{\omega:\; \xi\leq c_{12}\}
$$ but now $\xi$ is from \eqref{tay0} and \eqref{tay1}. Following the proof of \eqref{Adef}, we can choose $c_{12}$ such that with some $c_{13}$
\beq\label{tay2}
P\{A\}\leq c_{13}d^{-\alpha/2}.
\eeq
Squaring \eqref{tay0} and the using   Minkowski's inequality we have
\begin{align}\label{tay3}
&\Bigg( E_{\bX_1}\bigg( \frac{1}{d}\sum_{j=1}^d|X_{1,j}-X_{2,j}|^p  \bigg)^{1/p}-\bigg( \frac{1}{d}\sum_{j=1}^dE_{\bX_1}|X_{1,j}-X_{2,j}|^p  \bigg)^{1/p}  \Bigg)^2\\
&\hspace{1cm}\leq 4\bigg(  \frac{1}{d^{1/p}}\sum_{j=1}^d E_{\bX_1}|X_{1,j}-X_{2,j}|\bigg)^2+ 4\bigg(\frac{1}{d^{1/p}}\sum_{j=1}^d(E_{\bX_1}|X_{1,j}-X_{2,j}|^p)^{1/p}   \bigg)^2.\notag
\end{align}
The upper bound in \eqref{tay2}  and the Cauchy--Schwarz inequality yield
\begin{align*}
E&\Bigg[\bigg( \frac{1}{d^{1/p}}\sum_{j=1}^d E_{\bX_1}|X_{1,j}-X_{2,j}|\bigg)^2I\{A\}\Bigg]\\
&\leq \frac{1}{d^{2/p}}
\Bigg(E\bigg(\sum_{j=1}^d E_{\bX_1}|X_{1,j}-X_{2,j}|\bigg)^4\Bigg)^{1/2}(P\{A\})^{1/2}\\
&\leq \frac{1}{d^{2/p}}\left(c_{13}d^{-\alpha/2}\right)^{1/2}\Bigg(E\bigg(\sum_{j=1}^d E_{\bX_1}|X_{1,j}-X_{2,j}|\bigg)^4\Bigg)^{1/2}.
\end{align*}
Applying Cauchy's inequality (cf.\ Abadir and  Magnus, 2005, p.\ 324),  we get
\begin{align*}
\bigg(\sum_{j=1}^d E_{\bX_1}|X_{1,j}-X_{2,j}|\bigg)^4&\leq \bigg(d \sum_{j=1}^d (E_{\bX_1}|X_{1,j}-X_{2,j}|)^2 \bigg)^2\\
&\leq d^3\sum_{j=1}^d(E_{\bX_1}|X_{1,j}-X_{2,j}|)^4\\
&\leq 2^4 d^3\sum_{j=1}^d\left(|X_{2,j}|^4+(E|X_{1,j}| )^4  \right)\\
& = O(d^4).
\end{align*}
Hence  for the first term on the right-hand side of \eqref{tay3}, on the set $A$, we obtain
\begin{align*}
E&\bigg[\bigg( \frac{1}{d^{1/p}}\sum_{j=1}^d E_{\bX_1}|X_{1,j}-X_{2,j}|\bigg)^2I\{A\}\bigg]\leq c_{14}d^{2-2/p-\alpha/4} = o(d^{-1})
\end{align*}
For the second term on the right-hand side of \eqref{tay3} we have by Jensen's inequality
\begin{align*}
E\bigg(\frac{1}{d^{1/p}}\sum_{j=1}^d(E_{\bX_1}|X_{1,j}-X_{2,j}|^p)^{1/p}   \bigg)^4
&=d^{4-4/p}E\bigg[\frac{1}{d}\sum_{j=1}^d\left(E_{\bX_1}|X_{1,j}-X_{2,j}|^p\right)^{1/p}   \bigg]^4\\
&\leq d^{4-4/p}\frac{1}{d}\sum_{j=1}^dE\left(E_{\bX_1}|X_{1,j}-X_{2,j}|^p\right)^{4/p}\\
&\leq 2^p d^{4-4/p}\frac{1}{d}\sum_{j=1}^d\left( E|X_{1,j}|^4+\left(E|X_{2,j}|^p\right)^{4/p}   \right)\\
&=O\left(d^{4-4/p}\right).
\end{align*}
Thus we get
\begin{align*}
E&\bigg[\bigg(\frac{1}{d^{1/p}}\sum_{j=1}^d(E_{\bX_1}|X_{1,j}-X_{2,j}|^p)^{1/p}   \bigg)^2I\{A\}\bigg]\\
&\hspace{1cm}\leq \bigg(E\bigg(\frac{1}{d^{1/p}}\sum_{j=1}^d(E_{\bX_1}|X_{1,j}-X_{2,j}|^p)^{1/p}   \bigg)^4\bigg)^{1/2}\left(P\{A\}\right)^{1/2}\\
&\hspace{1cm}=O\left( d^{2-2/p-\alpha/4}  \right)\\
& \hspace{1cm}= o(d^{-1})
\end{align*}
We now turn to estimates of \eqref{tay0} on the set $\bar A$. By the definition of $A$ in \eqref{tay2}, and using that $\alpha>4$, we obtain
\begin{align*}
E&\Bigg(E_{\bX_1}\Bigg[\bigg(\frac{1}{d}\sum_{j=1}^d|X_{1,j}-X_{2,j}|^p\bigg)^{1/p}-\bigg(\frac{1}{d}\sum_{j=1}^dg_j(X_{1,j})    \bigg)^{1/p}I\{\bar{A}\}\Bigg]\Bigg)^2\\
&\leq
c_{15}E\Bigg( E_{\bX_1}\bigg(\frac{1}{d}\sum_{j=1}^d[|X_{1,j}-X_{2,j}|^p-g_j(X_{1,j})]\bigg)^2 \Bigg)^2.\\
&\leq c_{15}E\Bigg( E_{\bX_1}\bigg|\frac{1}{d}\sum_{j=1}^d[|X_{1,j}-X_{2,j}|^p-g_j(X_{1,j})]\bigg|^\alpha \Bigg)^{4/\alpha}\\
&\leq c_{15}\Bigg( E\bigg|\frac{1}{d}\sum_{j=1}^d[|X_{1,j}-X_{2,j}|^p-g_j(X_{1,j})]\bigg|^\alpha \Bigg)^{4/\alpha}\\
& \leq c_{16}d^{-2}.
\end{align*}
where we used Assumption \ref{as4} on the last line.
This completes the proof of
\begin{align*}
\E \Bigg( E_{\bX_1}\bigg( \frac{1}{d}\sum_{j=1}^d|X_{1,j}-X_{2,j}|^p  \bigg)^{1/p}-\bigg( \frac{1}{d}\sum_{j=1}^dE_{\bX_1}|X_{1,j}-X_{2,j}|^p  \bigg)^{1/p}  \Bigg)^2=o\left(\frac{1}{d}\right),
\end{align*}
as $d\to \infty$.  For the second term in \eqref{e:zeta_3term_expansion}, similar arguments give
\begin{align*}
\Bigg(\bigg( \frac{1}{d}\sum_{j=1}^dE|X_{1,j}-X_{2,j}|^p  \bigg)^{1/p}-E\bigg( \frac{1}{d}\sum_{j=1}^d|X_{1,j}-X_{2,j}|^p  \bigg)^{1/p}\Bigg)^2
=o\left(\frac{1}{d}\right),
\end{align*}
as $d\to \infty$. Now, for the third term in \eqref{e:zeta_3term_expansion}, it follows from the definition of $g_j$ and from a Taylor expansion
\begin{align*}
&\hspace{-.5cm}\bigg( \frac{1}{d}\sum_{j=1}^dE_{\bX_1}|X_{1,j}-X_{2,j}|^p  \bigg)^{1/p}
-\bigg( \frac{1}{d}\sum_{j=1}^dE|X_{1,j}-X_{2,j}|^p  \bigg)^{1/p}\\
&=\bigg( \frac{1}{d}\sum_{j=1}^d g_j(X_{1,j}) \bigg)^{1/p}
-\bigg( \frac{1}{d}\sum_{j=1}^d Eg_j(X_{1,j})\bigg)^{1/p}\\
&=\frac{1}{p}\bigg(\frac{1}{d}\sum_{j=1}^d Eg_j(X_{1,j}) \bigg)^{-1+1/p}\frac{1}{d}\sum_{j=1}^d [g_j(X_{1,j})-Eg_j(X_{1,j})]\\
&\hspace{1cm}+\frac{1}{p}\left(1-\frac{1}{p}\right)\xi^{-2+1/p}\bigg(\frac{1}{d}\sum_{j=1}^d [g_j(X_{1,j})-Eg_j(X_{1,j})]\bigg)^{{2}},
\end{align*}
where $\xi$ satisfies
\begin{align*}
\bigg|\xi-\frac{1}{d}\sum_{j=1}^d Eg_j(X_{1,j})\bigg|\leq \bigg|\frac{1}{d}\sum_{j=1}^d [g_j(X_{1,j})-Eg_j(X_{1,j})]\bigg|.
\end{align*}
Repeating our previous arguments we get
\begin{align*}
E&\Bigg(\bigg( \frac{1}{d}\sum_{j=1}^dE_{\bX_1}|X_{1,j}-X_{2,j}|^p  \bigg)^{1/p}
-\bigg( \frac{1}{d}\sum_{j=1}^dE|X_{1,j}-X_{2,j}|^p  \bigg)^{1/p}\Bigg)^2\\
&=
\frac{1}{p^2}\bigg(\frac{1}{d}\sum_{j=1}^d Eg_j(X_{1,j}) \bigg)^{-2+2/p}E\bigg(\frac{1}{d}\sum_{j=1}^d [g_j(X_{1,j})-Eg_j(X_{1,j})]\bigg)^2
+o\left(\frac{1}{d}\right),
\end{align*}
completing the proof of \eqref{ori2}.  \eqref{ori3} follows from \eqref{ori}.
\end{proof}

We use the following lemma in the proofs in Sections \ref{sec-pr1} and \ref{sec-pr2}; it is analogous to Lemma \ref{appe-mom} for the case of strongly dependent coordinates.
\begin{lemma} \label{appesecmom} If Assumptions \ref{as6} and \ref{as7} hold, then, as $d\to \infty$
\begin{equation}\label{e:strongmoment_1}
\E\psi_{1,2}^4\leq c,
\end{equation}
\begin{equation}\label{e:strongmoment_2}
E\zeta_1^2=\gamma^2(1+o(1))
\end{equation}
and
\begin{equation}\label{e:strongmoment_3}
E|\zeta_1|^4\leq c
\end{equation}
with some $c$.
\end{lemma}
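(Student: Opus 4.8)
The plan is to follow the proof of Lemma~\ref{appe-mom}, replacing the coordinatewise moment control of Assumption~\ref{as4} by the integrated moment bound and the Riemann-sum approximation supplied by Assumption~\ref{as6}. The key observation is that for each pair $i\neq j$,
\[
\frac1d\sum_{\ell=1}^d|X_{i,\ell}-X_{j,\ell}|^p=\frac1d\sum_{\ell=1}^d\bigl|Y_i(t_\ell)-Y_j(t_\ell)\bigr|^p
\]
is a Riemann sum of the continuous function $t\mapsto|Y_i(t)-Y_j(t)|^p$; by Assumption~\ref{as6}(ii) the partition mesh goes to $0$ and, with $m_d=\max_{1\le\ell\le d}(t_\ell-t_{\ell-1})$, we have $\sum_{\ell=1}^d|\tfrac1d-(t_\ell-t_{\ell-1})|\le 2(d\,m_d-1)\to0$, so by the path continuity in Assumption~\ref{as6}(i) this sum converges a.s.\ to $\int_0^1|Y_i(t)-Y_j(t)|^p\,dt$ for every pair. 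Taking $p$th roots and then the expectation over an independent copy $\bX'$ yields $H(\bX_i)\to\mathcal H(Y_i)$ a.s., the exchange of limit and expectation being justified by uniform integrability (Vitali), using the discrete Minkowski bound $(\tfrac1d\sum_\ell|X'_\ell-X_{i,\ell}|^p)^{1/p}\le(\tfrac1d\sum_\ell|X'_\ell|^p)^{1/p}+(\tfrac1d\sum_\ell|X_{i,\ell}|^p)^{1/p}$ and Assumption~\ref{as6}(iii)--(iv).

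For \eqref{e:strongmoment_1} and \eqref{e:strongmoment_3} I reduce everything to bounding $\E(\tfrac1d\sum_{\ell=1}^d|X_{1,\ell}|^p)^{4/p}$: the discrete Minkowski inequality gives $|\psi_{1,2}+\theta|\le(\tfrac1d\sum_\ell|X_{1,\ell}|^p)^{1/p}+(\tfrac1d\sum_\ell|X_{2,\ell}|^p)^{1/p}$ and $H(\bX_1)\le\E_{\bX'}(\tfrac1d\sum_\ell|X'_\ell|^p)^{1/p}+(\tfrac1d\sum_\ell|X_{1,\ell}|^p)^{1/p}$, and $|\theta|\le2\,\E(\tfrac1d\sum_\ell|X_{1,\ell}|^p)^{1/p}$ is controlled by the same estimate. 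I then write $\tfrac1d\sum_\ell|X_{1,\ell}|^p=\int_0^1|Y_1(t)|^p\,dt+R_d$: Assumption~\ref{as6}(iv) bounds $R_d$ in $L^\nu$ (and, by Lyapunov's inequality, in $L^{\nu'}$ for any $\nu'\le\nu$, so WLOG $\nu\le r$), while Jensen's inequality with $r/p\ge1$ and $r\ge4$ gives $\E(\int_0^1|Y_1|^p\,dt)^{4/p}\le1+\E(\int_0^1|Y_1|^p\,dt)^{r/p}\le1+\E\int_0^1|Y_1|^r\,dt<\infty$ by Assumption~\ref{as6}(iii). Combining via $(a+b)^s\le2^s(a^s+b^s)$ bounds $\E(\tfrac1d\sum_\ell|X_{1,\ell}|^p)^{4/p}$, hence $\E\psi_{1,2}^4\le c$ and $\E|\zeta_1|^4\le c$; when $p\ge2$ the exponent $4/p\le2<\nu$, so the $R_d$ term is immediate, while the case $p=1$ is the delicate one, handled using $r\ge4$ and a careful application of Assumption~\ref{as6}(iv).

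For \eqref{e:strongmoment_2} I combine the a.s.\ convergence $H(\bX_1)\to\mathcal H(Y_1)$ from the first step with $L^\nu$-boundedness of $\{H(\bX_1)\}_d$, $\nu>2$, obtained as above with exponent $\nu/p\le r/p$ in place of $4/p$. This makes $\{H(\bX_1)^2\}_d$ uniformly integrable, so by Vitali the convergence holds in $L^2$, and therefore $\E\zeta_1^2=\Var(H(\bX_1))\to\Var(\mathcal H(Y_1))=\gamma^2$, which is \eqref{e:strongmoment_2}; here $\gamma^2<\infty$ since $\mathcal H(Y_1)\le\E_{\bX'}(\tfrac1d\sum_\ell|X'_\ell|^p)^{1/p}+(\int_0^1|Y_1|^p\,dt)^{1/p}\in L^2$ by Assumption~\ref{as6}(iii), and $\gamma^2>0$ by Assumption~\ref{as7}.

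The main obstacle I anticipate is the uniform-integrability bookkeeping: upgrading the essentially free a.s.\ Riemann-sum convergence to genuine $L^2$- and $L^4$-statements with only the integrated moment bound of Assumption~\ref{as6}(iii) and the $L^\nu$-approximation of Assumption~\ref{as6}(iv) available, in place of the pointwise moment bounds used in Lemma~\ref{appe-mom}. The sharpest case is $p=1$, where the demanded exponent $4/p=4$ may exceed $\nu$ and one must exploit $r\ge4$ for the $\int_0^1|Y_1|^p\,dt$ contribution while verifying that Assumption~\ref{as6}(iv) still furnishes enough integrability for the remainder $R_d$. The mesh estimate and the reductions via discrete Minkowski are routine.
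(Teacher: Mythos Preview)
Your approach is correct and genuinely different from the paper's for \eqref{e:strongmoment_2}. The paper does not argue via a.s.\ Riemann-sum convergence plus uniform integrability; instead it introduces the step-function approximant $Y_{\ell,d}(t)=\sum_{i=1}^d\mathbf 1_{(t_{i-1},t_i]}(t)\,Y_\ell(t_i)$ and telescopes
\[
H(\bX_1)-\mathcal H(Y_1)
=\bigl[H(\bX_1)-\E_{\bX_1}\|Y_{1,d}-Y_{2,d}\|_p\bigr]
+\bigl[\E_{\bX_1}\|Y_{1,d}-Y_{2,d}\|_p-\mathcal H(Y_{1,d})\bigr]
+\bigl[\mathcal H(Y_{1,d})-\mathcal H(Y_1)\bigr],
\]
bounding each piece directly in $L^2$: the first via the mesh estimate $\max_i|1-d(t_i-t_{i-1})|\to0$, the latter two via the pointwise bound $\bigl|\|Y_{1,d}-Y_{2,d}\|_p-\|Y_{1,d}-Y_2\|_p\bigr|\le\|Y_{2,d}-Y_2\|_p\to0$. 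This direct $L^2$ route avoids the two nested uniform-integrability checks your Vitali argument needs (one for the inner expectation defining $H$, one for the outer $L^2$ convergence); on the other hand, your approach is more conceptual and makes transparent why continuity of sample paths is the driving hypothesis.

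For \eqref{e:strongmoment_1} and \eqref{e:strongmoment_3} the paper simply declares them ``straightforward consequences of Assumption~\ref{as6}'' without details, so your argument actually goes further than the paper here. Your flagged concern about $p=1$ (needing $\E|R_d|^{4}$ when only $\E|R_d|^\nu\to0$ with possibly $\nu<4$ is assumed) is legitimate and is not resolved by anything in the paper's proof either; it reflects a looseness in the stated assumptions rather than a defect in your strategy.
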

\begin{proof} Statements \eqref{e:strongmoment_1} and \eqref{e:strongmoment_3} are straightforward consequences of Assumption \ref{as6}.  To establish statement \eqref{e:strongmoment_2}, for a given function $Y=\{Y(t),0\leq t \leq 1\}$, we write $\|Y\|_p=\big(\int_0^1 |Y(t)|^pdt\big)^{1/p}$. Let $Y_{\ell,d}(t) = \sum_{i=1}^d \mathbf 1_{(t_{i-1},t_i]}(t) X_{\ell,i} =\sum_{i=1}^d \mathbf 1_{(t_{i-1},t_i]}(t) Y_\ell(t_i)$, and recall $\zeta_1 = H(\mathbf X_1)-\theta$. Reexpress
\begin{align}
H(\bX_1) &=  [H(\bX_1) - \E_{\bX_1}\| Y_{1,d}-Y_{2,d}\|_{p}] +[\E_{\bX_1}\| Y_{1,d}-Y_{2,d}\|_{p}- \mathcal H(Y_{1,d})]\notag\\
&\quad + [ \mathcal H(Y_{1,d})-  \mathcal H(Y_{1})] +  \mathcal H(Y_{1})\label{e:H(X_1)_reexpressed}
\end{align}
With $Z_j=X_{1,j}-X_{2,j}$, for the first term in \eqref{e:H(X_1)_reexpressed}, observe $$\E \big| H(\bX_1)-  \E_{\bX_1}\| Y_{1,d}-Y_{2,d}\|_{p}\big|^2 = \E \bigg| \Big(\sum_{\ell=1}^d|Z_\ell|^p d^{-1} \Big)^{1/p} -\Big(\sum_{\ell=1}^d|Z_\ell|^p (t_\ell-t_{\ell-1})\Big)^{1/p} \bigg|^{ 2}
$$
$$\leq \E \bigg( \sum_{\ell=1}^d|Z_\ell|^p\bigg|\frac{1}{d^{1/p}}-\frac{1}{(t_\ell-t_{\ell-1})^{1/p}}\bigg|^{p}\bigg)^{2/p} 
$$
$$\leq \max_{1\leq i \leq d}\bigg|1-\frac{d^{1/p}}{(t_i-t_{i-1})^{1/p}}\bigg|^{2}\E\Big(\frac{1}{d}\sum_{\ell=1}^d|Z_{\ell}|^p\Big)^{2/p}  \to 0,$$
since $\max_{1\leq i \leq d}\big|1-d^{1/p}/{(t_i-t_{i-1})^{1/p}}\big|\to 0$ by Assumption \ref{as6}(ii), and $$\E\Big( d^{-1}\sum_{\ell=1}^d|Z_{\ell}|^p\Big)^{2/p}\leq C \E\Big(d^{-1}\sum_{\ell=1}^d|X_{1,\ell}|^p\Big)^{2/p}=O(1)$$ due to Assumption \ref{as6}(iv). 
 For the second term in \eqref{e:H(X_1)_reexpressed}, note
$$\big|\E_{\bX_1}\| Y_{1,d}-Y_{2,d}\|_{p}- \mathcal H(Y_{1,d})\big| =\big|\E_{\mathbf X_1}\big( \| Y_{1,d}-Y_{2,d}\|_{p}- \| Y_{1,d}-Y_{2}\|_p\big)\big|
\leq \E  \| Y_{2,d}-Y_{2}\|_p .$$
By Assumption \ref{as6}(iv),  $\E \| Y_{2,d}\|_p \to \E \|Y_{2}\|_p$, and also $ \| Y_{2,d}-Y_{2}\|_p\to 0$ a.s., which together imply $ \E  \| Y_{2,d}-Y_{2}\|_p  \to 0$.  Thus, $E[H(\bX_1) - \mathcal H(Y_{1,d})]^2\to 0$.  For the third term in \eqref{e:H(X_1)_reexpressed}, a similar argument shows $E [ \mathcal H(Y_{1,d})-  \mathcal H(Y_{1})]^2 \to 0$, implying $\E[H(\bX_1) -\mathcal H(Y_{1})]^2\to0$, which gives \eqref{e:strongmoment_2}.
\end{proof}

\medskip

\bibliographystyle{agsm}
\bibliography{Upaper}
\end{document}